\documentclass{amsart}
\usepackage{hyperref,enumerate}
\usepackage{xcolor}
\usepackage{comment}

\newcommand{\D}{\mathcal{D}}
\newcommand{\R}{\mathcal{R}}
\newcommand{\HH}{\mathcal{H}}
\newcommand{\ra}{\rangle}
\newcommand{\la}{\langle}

\newtheorem{theorem}{Theorem}[section]
\newtheorem{lemma}[theorem]{Lemma}
\newtheorem{proposition}[theorem]{Proposition}
\newtheorem{corollary}[theorem]{Corollary}
\newtheorem{claim}[theorem]{Claim}

\theoremstyle{definition}
\newtheorem{definition}[theorem]{Definition}

\theoremstyle{remark}
\newtheorem{remark}[theorem]{Remark}
\numberwithin{equation}{section}

\DeclareMathOperator*{\essinf}{ess\,inf}

\title[Extremal Metrics for conformal Dirichlet-to-Robin Map]{Existence and Regularity of Extremal Metrics for the Conformal Dirichlet-to-Robin Map}

\author{Samuel P\'{e}rez-Ayala}
\address{Department of Mathematics\\
         Princeton University\\
         Princeton, NJ 08544}

\subjclass[2020]{58J50, 58J32, 35J25}

\keywords{Conformal Dirichlet-to-Robin map, extremal eigenvalues, Type II Yamabe problem,
free-boundary harmonic maps}

\begin{document}

\begin{abstract}
We study the variational properties of the spectrum of the Dirichlet-to-Robin map $\D_g$ on connected compact manifolds with boundary of dimension at least three. For the first eigenvalue, we show that Type II Yamabe metrics extremize the first normalized eigenvalue functional, and we characterize all extremals. If $[g]$ is a conformal class for which $\D_g$ has at least two negative eigenvalues, then we show the existence of a generalized metric that maximizes the second normalized eigenvalue of $\D_g$ in the conformal class. Moreover, we show that each such metric either defines a solution to an Escobar--Yamabe type equation on manifolds with boundary that changes sign along the boundary, or a weakly free-boundary harmonic map into the unit Euclidean ball. 
\end{abstract}

\thanks{This work was initiated during the author's first period at Princeton University. While there, the author was supported by NSF grant RTG-DMS-1502424.}

\maketitle

\tableofcontents

\section{Introduction}\label{Introduction}
Let $\overline{M}$ be a connected compact Riemannian manifold of dimension $n\ge 3$ with smooth boundary $\Sigma:=\partial \overline{M}$ and interior $M$, equipped with a conformal class $[g]$. Let $L_g$ denote the \textit{conformal Laplacian}, also known as the \textit{Yamabe operator}, which is defined by
\begin{equation}\label{DefCL}
L_g := -\Delta_g + c_nR_g.
\end{equation}
Here $\Delta_g:=\text{div}_g(\nabla_g )$ is defined as a negative operator, $c_n := \frac{n-2}{4(n-1)}$, and $R_g$ is the scalar curvature with respect to the metric $g$. This operator is elliptic and satisfies the following conformally covariant property: if $g_u= u^{\frac{4}{n-2}}g\in [g]$, then for all $\phi \in C^\infty(M)$ we have
\begin{equation}\label{ConfPropCL}
L_{g_u}(\phi) = u^{-\frac{n+2}{n-2}}L_g(\phi u).
\end{equation}

On a manifold with boundary there are many boundary eigenvalue problems associated with $L_g$. For instance, we could consider the following Dirichlet eigenvalue problem:
\begin{equation}\label{Dirichlet}
\begin{cases}
L_g(\phi) &= \lambda \phi \;(\text{in } M) \\
\phi&= 0 \; (\text{on }\Sigma).
\end{cases}
\end{equation}
We will refer to those numbers $\lambda\in\mathbb{R}$ for which there is a nonzero solution of (\ref{Dirichlet}) as \textit{Dirichlet eigenvalues}, and the collection of all such numbers will be denoted by $\text{Spec}(L_g^D)$. Notice that $\text{Spec}(L_g^D)$ is discrete, and that if $R_g=0$, then its first element $\lambda_1(L_g^D)$ is positive. 

We now proceed to introduce the eigenvalue boundary problem that will be of main interest to us. Denote by $\nu_g$ the outward unit normal vector along $\Sigma$, and by $h_g$ the (unnormalized) mean curvature of $\Sigma$. For every $\phi\in C^\infty(\overline{M})$, the \textit{Robin boundary operator} is defined as 
\begin{equation}\label{DefRobin}
B_g(\phi):= \partial_{\nu_g}\phi + 2c_nh_g\phi \hspace{.15in}(\text{on } \Sigma).
\end{equation}
$B_g$ is conformally covariant in the sense that, if $g_u=u^{\frac{4}{n-2}}g$, then 
\begin{equation}\label{ConfPropR}
B_{g_u}(\phi) = u^{-\frac{n}{n-2}}B_g(u\phi).
\end{equation}
Consider the numbers $\lambda\in \mathbb{R}$ for which there are nonzero solutions to the following system:
\begin{equation}\label{EigenR}
\begin{cases}
L_g(\phi) &= 0  \;(\text{in } M) \\
B_g(\phi) &= \lambda\phi \; (\text{on }\Sigma).
\end{cases}
\end{equation}
We refer to such numbers as \textit{Robin eigenvalues}. 

Assume that $0\not\in \text{Spec}(L_g^D)$, and let $\phi\in C^\infty(\Sigma)$ be given. Then there exists a unique solution $\mathcal{H}_g(\phi)\in C^\infty(\overline{M})$ (see \cite{Cox, Lassas}) of the system
\begin{equation}\label{Lg-HarmonicExtension}
\begin{cases}
L_g(\mathcal{H}_g(\phi)) &=0 \;(\text{in } M)\\
\mathcal{H}_g(\phi) & = \phi \;(\text{on }\Sigma).  
\end{cases}
\end{equation}
The \textit{conformal Dirichlet-to-Robin map} $\mathcal{D}_g: C^\infty(\Sigma)\to C^{\infty}(\Sigma)$ is defined by
\begin{equation}
\mathcal{D}_g(\phi):=B_g(\mathcal{H}_g(\phi)).
\end{equation} 
The operator $\mathcal{D}_g$ is an elliptic and self-adjoint pseudodifferential operator of order 1. Therefore, it has a discrete spectrum, which we can arrange in non-decreasing order as 
\begin{equation}\label{SpectrumDR}
\lambda_1(\mathcal{D}_g) \le \lambda_2(\mathcal{D}_g) \le \cdots \le \lambda_k(\mathcal{D}_g) \to\infty.
\end{equation}
As usual, each eigenvalue in (\ref{SpectrumDR}) is repeated according to its multiplicity. It is straightforward to verify that the spectrum of $\mathcal{D}_g$ and that of $B_g$, the collection of Robin eigenvalues, coincide. We would like to remark that if $0\in\text{Spec}(L_g^D)$, then there is a way to define the operator $\mathcal{D}_g$ by imposing an orthogonality condition on the domain functions $C^\infty(\Sigma)$ (see \cite{Cox}); this will not be used in this work. 

Due to the conformal properties (\ref{ConfPropCL}) and (\ref{ConfPropR}), there are many spectrally-defined quantities that give rise to conformal invariants which are of importance to us:
\begin{enumerate}[(i)]
\item The sign of $\lambda_1(\mathcal{D}_g)$ is an invariant of the conformal class $[g]$ (see Proposition 1.2 in \cite{Escobar2}). In fact, it agrees with the sign of the boundary analog of the Yamabe invariant, often called \textit{Type II Yamabe invariant} and defined by
\begin{equation}\label{YamInv}
Q(\overline{M},\Sigma,[g]):= \inf\; \frac{ \int_M(|\nabla_g\phi|^2+c_nR_g\phi^2)\;dv_g + 2c_n\int_\Sigma h_g\phi^2\;d\sigma_g}{\left(\int_\Sigma\phi^{\frac{2(n-1)}{(n-2)}}\;d\sigma_g\right)^{\frac{n-2}{n-1}}},
\end{equation}
where the infimum is taken over all $\phi\in C^\infty(\overline{M})$ for which $\phi\not=0$ on $\Sigma$, and where $d\sigma_g$ is the induced volume element on $\Sigma$. That $Q(\overline{M},\Sigma,[g])$ is conformally invariant was proved in \cite{Escobar}.

\item The dimension of $\text{Ker}(L_g^D)$ is conformally invariant. As a consequence, the condition $0\not\in \text{Spec}(L_g^D)$ is also conformally invariant. 

\item The number of negative eigenvalues of $\D_g$, which we denote by $N([g])$, is also a conformal invariant. In fact, it is equal to the number of negative eigenvalues of $L_g$ under the boundary condition $B_g(\phi) = 0$ on $\Sigma$, minus the number of negative eigenvalues of $L_g$ under Dirichlet conditions; see Theorem 6.1 in \cite{Cox}.
\end{enumerate}

We remark that the Yamabe invariant $Q(M^n,\Sigma, [g])$ can be $-\infty$, as it was  first pointed out by J. Zhiren (see \cite{Escobar3}). We assume that $Q(\overline{M},\Sigma,[g])$ is finite, that is, 
\begin{equation}\label{Assumption1}
Q(\overline{M},\Sigma,[g]) >-\infty.
\end{equation}
This is the case when the scalar curvature $R_g$ is nonnegative; see introduction in \cite{MarquesCAG}. Indeed, it is finite if and only if $\lambda_1(L_g^D)>0$ \cite{Escobar3}. On the other hand, it was proved in \cite{Escobar2} that
\begin{equation}
Q(\overline M, \Sigma, [g]) \le Q(\mathbb{B}^n, \mathbb{S}^{n-1}, [g_E]),
\end{equation}
where $\mathbb{B}^n$ denotes the closed unit ball in $\mathbb{R}^n$ endowed with the Euclidean metric $g_E$. 

\subsection{Statement of Results}

The main focus of our work will be finding extremals for the normalized eigenvalue functional $F_k$ ($k=1,2$) defined by
\begin{equation}\label{Functional}
F_k(u) := \lambda_k(u) \left(\int_\Sigma u^{\frac{2(n-1)}{n-2}}\;d\sigma_g\right)^{\frac{1}{n-1}},
\end{equation}
where $\lambda_k(u):=\lambda_k(\mathcal{D}_{g_u})$ and $g_u=u^{\frac{4}{n-2}}g\in [g]$. 

We start with the case of $k=1$.  In Section \ref{FirstEigenvalue}, we will show that if $-\infty<Q(\overline{M},\Sigma,[g]) < 0$, then 
\begin{equation}\label{FirstEigen1}
\sup_{g_u\in[g]}F_1(u) = Q(\overline{M},\Sigma,[g]).
\end{equation}
In the case where $Q(\overline{M},\Sigma,[g])\ge 0$, we have
\begin{equation}\label{FirstEigen2}
\inf_{g_u\in [g]} F_1(u) = Q(\overline{M},\Sigma,[g]).
\end{equation}
In particular, smooth extremal metrics for $F_1$ exist whenever smooth positive minimizers for $Q(\overline{M},\Sigma,[g])$ can be found. Geometrically, these minimizers provide a conformal metric which is scalar flat in the interior and has constant mean curvature along the boundary (\cite{Escobar2}). As discussed by Escobar in \cite{Escobar2}, such a minimizer always exists as long as 
\begin{equation}\label{ExistenceCond}
-\infty < Q(\overline{M},\Sigma,[g]) < Q(\mathbb{B}^n, \mathbb{S}^{n-1}, [g_E])
\end{equation}
holds. Inequality (\ref{ExistenceCond}) has been proved in multiple cases; see for instance \cite{Almaraz, Chen, Escobar2, MarquesCAG, Marques}. In summary, (\ref{ExistenceCond}) holds in the following cases: (1) $3\le n\le 7$; (2) $n\ge 8$ and $\overline{M}$ is spin; and (3) $n\ge 8$ and $\overline{M}$ is locally conformally flat \cite{ChenLaiWang}. The reader should notice that if the Yamabe invariant $Q(\overline{M},\Sigma, [g])$ is finite and negative, then (\ref{ExistenceCond}) trivially holds; thus a minimizer exists and it is scalar flat with constant mean curvature on the boundary.  

We summarize the previous discussion as our first theorem, which is proved in Section \ref{FirstEigenvalue}.

\begin{theorem}\label{TFirstEigenvalue}
Let $\overline{M}$ be a smooth Riemannian manifold of dimension $n\ge 3$ with smooth boundary $\Sigma:= \partial M$ and interior $M$, equipped with a conformal class $[g]$.
\begin{enumerate}
\item If $Q(\overline{M},\Sigma,[g])< 0$, then (\ref{FirstEigen1}) holds.  
\item If $Q(\overline{M},\Sigma,[g]) > 0$ and (\ref{ExistenceCond}) is satisfied, then (\ref{FirstEigen2}) holds. 
\item If $Q(\overline{M},\Sigma,[g])=0$, then $F_1\equiv 0$ on $[g]$.
\end{enumerate}
In either nonzero case, the extremal value is attained by a minimizer $g_Y\in [g]$ of $Q(\overline{M},\Sigma, [g])$; these metrics have zero scalar curvature in the interior and constant mean curvature at the boundary. In the negative case, after choosing as a background metric $g_Y$, a metric $g_u = u^{\frac{4}{n-2}}g_Y$ attains the maximal value for $F_1$ if and only if $u|_{\Sigma}$ is constant, that is, if $g_u|_{T\Sigma}$ is homothetic to ${g_Y}|_{T\Sigma}$. In the positive case, after fixing the background metric $g_Y$, a metric $g_u=u^{\frac{4}{n-2}}g_Y$ attains the minimal value of $F_1$ if and only if there exists a positive first generalized eigenfunction $\phi_1$ associated
to $g_u$ such that $u=C\phi_1$ on $\Sigma$ for some $C>0$, and
$\phi_1^{\frac{4}{n-2}}g_Y$ is a Type II Yamabe minimizer.
\end{theorem}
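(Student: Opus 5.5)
The argument rests on a variational characterization of $\lambda_1$ combined with Hölder's inequality. By conformal covariance (\ref{ConfPropCL})–(\ref{ConfPropR}), a function $\phi$ solves (\ref{EigenR}) for $g_u$ with eigenvalue $\lambda$ if and only if $\psi=u\phi$ satisfies $L_g\psi=0$ in $M$ and $B_g\psi=\lambda u^{\frac{2}{n-2}}\psi$ on $\Sigma$. The eigenvalues of $\mathcal{D}_{g_u}$ are therefore the generalized eigenvalues of the weighted problem. Since $\lambda_1(L_g^D)>0$, which is equivalent to (\ref{Assumption1}), the first one has the Rayleigh characterization
\[
\lambda_1(u)=\inf_{\psi|_\Sigma\neq 0}\frac{E_g(\psi)}{\int_\Sigma u^{\frac{2}{n-2}}\psi^2\,d\sigma_g},\qquad E_g(\psi)=\int_M(|\nabla_g\psi|^2+c_nR_g\psi^2)\,dv_g+2c_n\int_\Sigma h_g\psi^2\,d\sigma_g .
\]
Positivity of the Dirichlet spectrum lets us minimize over the interior extension, replacing $\psi$ by its $L_g$-harmonic extension, which lowers $E_g$. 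The first eigenfunction is also positive: it can be taken nonnegative, and the strong maximum principle together with Hopf's lemma makes it positive, using $\lambda_1(L^D_g)>0$. Throughout we fix the background $g_Y$, which is scalar flat with constant mean curvature $h$. In this metric $E_{g_Y}(\psi)=\int|\nabla\psi|^2+2c_nh\int_\Sigma\psi^2$, constants are $L$-harmonic, and $\lambda_1(g_Y)=2c_nh$ with constant eigenfunction. Write $p=\frac{2(n-1)}{n-2}$ and $V(u)=\int_\Sigma u^p\,d\sigma$.

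\textbf{Negative case.} Here $h<0$. Test the Rayleigh quotient with $\psi\equiv 1$ to get $\lambda_1(u)\le 2c_nh\,|\Sigma|/\int_\Sigma u^{\frac{2}{n-2}}$. Hölder's inequality with exponents $n-1$ and $\frac{n-1}{n-2}$ gives $\int_\Sigma u^{\frac{2}{n-2}}\le V(u)^{\frac{1}{n-1}}|\Sigma|^{\frac{n-2}{n-1}}$. Because $h<0$, these combine to
\[
F_1(u)\le 2c_nh|\Sigma|^{\frac{1}{n-1}}=Q(\overline M,\Sigma,[g]),
\]
where the final equality holds because $g_Y$ is a minimizer. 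Equality requires equality in Hölder, which forces $u|_\Sigma$ to be constant. Conversely, if $u$ is constant on $\Sigma$, then $\lambda_1(u)$ is just a rescaling of $\lambda_1(g_Y)$, and equality holds.

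\textbf{Positive case.} Let $\phi_1>0$ be the first eigenfunction for $g_u$, written in the $g_Y$ picture. Since $\lambda_1(u)>0$, Hölder in the other direction, $\int_\Sigma u^{\frac{2}{n-2}}\phi_1^2\le V(u)^{\frac{1}{n-1}}(\int_\Sigma\phi_1^p)^{\frac{n-2}{n-1}}$, yields
\[
F_1(u)=\frac{E(\phi_1)\,V(u)^{\frac{1}{n-1}}}{\int_\Sigma u^{\frac{2}{n-2}}\phi_1^2}\ge\frac{E(\phi_1)}{\|\phi_1\|_{L^p(\Sigma)}^2}\ge Q .
\]
Equality is attained at $u=1$, since $g_Y$ itself attains $Q$. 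Equality throughout requires two things: equality in Hölder, which means $u$ is proportional to $\phi_1$ on $\Sigma$; and $\phi_1$ being a Type II Yamabe minimizer. This gives the characterization in the statement.

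\textbf{Zero case.} Here $h=0$. The Rayleigh quotient is then nonnegative, and $\psi\equiv 1$ gives $0$, so $\lambda_1(u)=0$ for every $u$ and $F_1\equiv 0$. In the first two cases the existence of $g_Y$ comes from (\ref{ExistenceCond}), which holds automatically when $Q<0$.

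The main obstacle is the careful justification of the variational characterization for the weighted generalized problem. This includes the equivalence between $\lambda_1(\mathcal D_{g_u})$ and the weighted infimum over $H^1$, and the positivity of $\phi_1$. Both rely on $\lambda_1(L^D_g)>0$ to control the interior term. If a "generalized metric" allows $u$ to be non-smooth, I would first prove the result for smooth $u$, then extend by approximation.
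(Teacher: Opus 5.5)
Your proposal is correct and follows essentially the same route as the paper. With the Type II Yamabe metric $g_Y$ as background, you apply H\"older's inequality to the weighted Rayleigh quotient $\R_g^u$ to get the bound, and the equality case of H\"older gives the rigidity: $u|_\Sigma$ constant in the negative case, and $u=C\phi_1$ with $\phi_1$ a Yamabe minimizer in the positive case.

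The only difference is in the negative case. You test directly with $\psi\equiv 1$, which uses that $g_Y$ attains $Q$. The paper instead applies H\"older to every negative-energy test function and takes the infimum. That yields $F_1\le Q$ without needing a minimizer, and also covers the paper's remark about $Q=-\infty$.
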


Theorem \ref{TFirstEigenvalue} provides the geometric motivation to study existence of extremals for higher eigenvalues. We would like to make some brief remarks about the closed case, i.e. the conformal Laplacian on compact manifolds $X^n$ without boundary. For the first eigenvalue, an analogous result to Theorem \ref{TFirstEigenvalue} holds, where $Q(\overline{M},\Sigma,[g])$ is replaced by the standard Yamabe invariant $Y(X,[g])$; see \cite{Ammann, Gursky} for a discussion. 

To motivate our second result, we make some further remarks on the closed case. We use $X$ to denote closed manifolds and reserve $\overline M$ for our nonclosed setup. In the case of positive Yamabe invariant, Ammann and Humbert in \cite{Ammann}, motivated by the equality
\begin{equation}
\inf_{g_u\in [g]}\lambda_1(L_{g_u})\text{Vol}(X,g_u)^{\frac{2}{n}} = Y(X,[g]),
\end{equation}
defined the second Yamabe invariant by setting 
\begin{equation}
\underline{\mu} _2(X,[g]):= \inf_{g_u\in [g]}\lambda_2(L_{g_u})\text{Vol}(X,dv_{g_u})^{\frac{2}{n}}.
\end{equation}
Unlike the case of the first eigenvalue, smooth minimizers do not exist. Under certain assumptions on $\underline\mu_2(X,[g])$, Ammann and Humbert proved the existence of a minimizer in the space of \textit{generalized conformal metrics}; see \cite{Ammann,Gursky} for definition in the closed case, or see Section \ref{Preliminaries} below for definition in our setup. Moreover, for any such minimizer $u$, there is a sign-changing solution $w\in C^{3,\alpha}(X)$ to a Yamabe-type equation such that $u=|w|$ holds (see Theorem 1.6 in \cite{Ammann}). A slight generalization of this result was obtained in \cite{ElSayed} for the case where the Yamabe invariant could be negative, but $\lambda_2(L_g)$ is still nonnegative.

We conclude our discussion of the closed case by explaining what is known in the case where $\lambda_2(L_g)<0$. In this case, it was proved in \cite{ElSayed} that the infimum of the normalized second eigenvalue functional over generalized conformal metrics is $-\infty$. Instead, and motivated by the equality
\begin{equation}
\sup_{g_u\in [g]}\lambda_1(L_{g_u})\text{Vol}(X,g_u)^{\frac{2}{n}} = Y(X,[g]) \hspace{.3in}   (\text{if }Y(X,[g])<0),
\end{equation}
the author and M.J. Gursky considered
\begin{equation}
\overline{\mu} _2(X,[g]):= \sup_{g_u\in [g]}\lambda_2(L_{g_u})\text{Vol}(X,dv_{g_u})^{\frac{2}{n}}.
\end{equation}
We proved the existence of maximizers in the space of generalized conformal metrics; see Theorem 1.1 in \cite{Gursky}. Moreover, associated with each such maximizer, there is either an associated nodal solution to a Yamabe-type equation, or a harmonic map into a sphere; see Corollary 1.2 in \cite{Gursky}. We also provide examples where both cases occur and where a maximizer is smooth. 

Our next result is a generalization of this to the case of compact manifolds with boundary. 

\begin{theorem}\label{MainTheorem}
Let $(\overline{M},g)$ be a smooth connected Riemannian manifold of dimension $n\ge 3$ with smooth boundary $\Sigma= \partial M$ and interior $M$, equipped with a conformal class $[g]$ for which $Q(\overline{M},\Sigma, [g])$ is finite. Assume further that $N([g])\ge 2$ and $0\not \in \text{Spec}(\D_g)$. Choose a Type II
Yamabe representative \(g\in[g]\), normalized so that
\[
R_g=0,\qquad h_g<0\ \text{is constant},\qquad
\operatorname{Vol}(\Sigma, d\sigma_g)=1.
\]
Then there is a nonnegative function $ u\in C^{0,\beta}(\Sigma)\cap C^\infty(\Sigma\setminus u^{-1}(0))$ with $\beta\in (0,1)$, whose $g$-harmonic extension $\hat u$ is smooth and positive in the interior $M$, that maximizes the normalized second eigenvalue functional 
\begin{equation}\label{MT1}
F_2: u\in L_{>0}^{\frac{2(n-1)}{n-2}}(\Sigma)\longmapsto \lambda_2(u)\left(\int_\Sigma u^{\frac{2(n-1)}{n-2}}\;d\sigma_g\right)^{\frac{1}{n-1}}
\end{equation} 

Furthermore, $u^{-1}(0)$ has zero boundary measure -- specifically, $\dim_{\mathrm{H}}(u^{-1}(0))\le n-2$. Moreover, for this maximizer $u$, there exists a finite collection $\{\phi_i\}_{i=1}^k\subset C^{1,\beta}(\Sigma)\cap C^\infty(M)$ of second generalized eigenfunctions satisfying
\begin{equation}\label{MT2}
\sum_{i=1}^k\phi_i^2 = u^2
\end{equation}
on $\Sigma$. Here $1\le k\le \text{dim}(E_2(u))$, where $E_2(u)$ is the space of second generalized eigenfunctions associated to $ u$.
\end{theorem}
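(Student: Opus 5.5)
We follow the strategy of \cite{Gursky} for the closed case, adapted to the Dirichlet-to-Robin setting. The first step is the variational framework. With $g$ the normalized Type II Yamabe representative ($R_g=0$, $h_g<0$ constant, $\mathrm{Vol}(\Sigma)=1$), we check that $L_g$-harmonic extension is simply $g$-harmonic extension. By the covariance properties (\ref{ConfPropCL}) and (\ref{ConfPropR}), $\lambda_k(u)$ is then the $k$-th critical value of the Rayleigh quotient
\[
\frac{\int_M|\nabla\hat\phi|^2\,dv_g+2c_nh_g\int_\Sigma\phi^2\,d\sigma_g}{\int_\Sigma u^{\frac{2}{n-2}}\phi^2\,d\sigma_g},
\]
taken over $H^{1/2}(\Sigma)$ functions. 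This formulation makes sense for generalized metrics $u\in L^{\frac{2(n-1)}{n-2}}_{\ge0}(\Sigma)$, with $\lambda_k(u)$ defined by min-max.

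The second step is an upper bound and a gap. Since $N([g])\ge2$, $\lambda_2<0$ throughout the class. Using a test space spanned by the first two eigenfunctions of $\D_g$ together with H\"older's inequality, we bound $F_2$ above by a negative constant. We then show that the supremum $\overline\mu_2$ is strictly larger than $Q(\overline M,\Sigma,[g])$. To do this we construct competitors: take $g$ and glue in a concentrating bubble, or use a small perturbation, which raises $\lambda_2$ toward $\lambda_1$. Degenerations in which the metric splits into pieces can only produce $\max$-type combinations of first-eigenvalue quantities, each of which is at most $Q$. We therefore expect the strict inequality $\overline\mu_2> Q$, obtained through a test computation, to rule out concentration of a maximizing sequence $u_j$.

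The third step is existence. We normalize $\|u_j\|_{L^{\frac{2(n-1)}{n-2}}}=1$ and extract a weak limit. Using the strict gap together with concentration-compactness on the trace Sobolev inequality, in the spirit of the Lions/Escobar argument, we show that the eigenfunctions of $u_j$ converge strongly and that $\lambda_2$ is upper semicontinuous in the right direction. This yields a maximizer $u$ in the generalized class. We regard this as the \textbf{main obstacle}. The difficulty is to exclude mass escaping into bubbles at boundary points, which requires the sharp-constant comparison with $Q(\mathbb{B}^n,\mathbb{S}^{n-1},[g_E])$, and to exclude the case where $u$ vanishes on a large set, where the second eigenvalue could collapse.

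The fourth step is the Euler--Lagrange condition and regularity. Following the perturbation argument of \cite{Gursky}, we consider variations $u_t=u+t\psi$. Using the one-sided derivatives of $\lambda_2$ given by the min/max of the quadratic form $-\int u^{\frac{2}{n-2}-1}\psi\,\phi^2$ over $E_2(u)$, a Hahn--Banach/convexity argument yields second eigenfunctions $\phi_1,\dots,\phi_k$ with $\sum\phi_i^2=u^2$ on $\Sigma$. Each $\phi_i$ then solves
\[
\Delta\hat\phi_i=0\ \text{in }M,\qquad
\partial_\nu\phi_i+2c_nh_g\phi_i=\lambda_2 u^{\frac{2}{n-2}}\phi_i\ \text{on }\Sigma,
\]
where $u^{\frac{2}{n-2}}=(\sum\phi_i^2)^{\frac{1}{n-2}}$. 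This is a critical, Escobar-type nonlinear boundary system. We then run a Moser iteration / Br\'ezis--Kato type argument on the boundary nonlinearity to obtain $\phi_i\in L^\infty$. Since the nonlinearity is H\"older, this bootstraps to $C^{1,\beta}(\Sigma)$, so $u\in C^{0,\beta}$, and $u$ is smooth where it is positive. The harmonic extension $\hat u$ is positive in $M$ by the maximum principle, since $u\ge0$ and $u\not\equiv0$. Finally, $u^{-1}(0)$ is the common zero set of the $\phi_i$. Each $\phi_i$ solves an elliptic boundary problem, so its zero set has Hausdorff dimension at most $n-2$ by nodal-set results for such equations (via unique continuation for the harmonic extension); this gives $\dim_{\mathrm H}u^{-1}(0)\le n-2$.
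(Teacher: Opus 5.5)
There is a genuine gap in your existence step, and your Step 4 cannot fill it without circularity. To pass to the limit along a maximizing sequence $u_j$, you need the weak limit $u$ (of $u_j^{N-2}$ in $L^{n-1}(\Sigma)$) to be positive almost everywhere on $\Sigma$. This positivity is used in three places:
\begin{enumerate}[(i)]
\item it gives the uniform lower bound on $\lambda_1(u_j)$ (Lemma~\ref{RemainsBounded}, Proposition~\ref{EigenvalEstimate});
\item it identifies the limit of the first eigenfunctions as the a.e.\ positive first eigenfunction of $u$, and hence, via Lemma~\ref{Char2} and test functions $\psi_2-c_jf_j$, gives $\lim_j\lambda_2(u_j)=\lambda_2(u)$;
\item it makes $u$ an admissible element of $L^N_{>0}(\Sigma)$ at all. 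If $u$ vanishes on a set of positive measure, generalized eigenvalues can even be $-\infty$.
\end{enumerate}
A weak limit of an arbitrary maximizing sequence has no reason to be positive a.e., and you give no mechanism for it. The bound $\dim_{\mathrm H}u^{-1}(0)\le n-2$ that you obtain in Step 4 from unique continuation needs the identity $u^2=\sum_i\phi_i^2$, with each $\phi_i$ solving a Robin problem with H\"older potential. That presupposes a maximizer and its Euler--Lagrange equation, so it cannot be used to construct the maximizer.

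The paper breaks this circle by regularization, in the following order:
\begin{enumerate}[(i)]
\item It maximizes $F_{2,\epsilon}$ over $\D_\epsilon=\{u:\,u^{-\epsilon}\in L^1(\Sigma)\}$. The convex, weakly lower semicontinuous penalty $\int_\Sigma u^{-\epsilon}$ forces a.e.\ positivity of weak limits, so a maximizer $u_\epsilon$ exists.
\item It derives $\gamma_{1,\epsilon}u_\epsilon^N=u_\epsilon^{N-2}\sum_ic_{i,\epsilon}\phi_{i,\epsilon}^2+\gamma_{2,\epsilon}u_\epsilon^{-\epsilon}$.
\item It proves $\epsilon$-independent $W^{1,p}$ and H\"older bounds for $u_\epsilon$, writing $u_\epsilon=f_\epsilon^{-1}\circ\Phi_\epsilon$ with $f_\epsilon'(t)\ge 2t$.
\item It passes to a limit satisfying $u^2=\sum_ic_i\phi_i^2$.
\item It applies Li's unique continuation theorem to get $\dim_{\mathrm H}u^{-1}(0)\le n-2$.
\item Only then does it prove that $u$ is maximal. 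It compares $F_{2,\epsilon_j}(u_{\epsilon_j})\ge F_{2,\epsilon_j}(w_\delta)$ for $w_\delta=\max\{w,\delta\}$, and uses $\epsilon\int_\Sigma u_\epsilon^{-\epsilon-N}\,d\sigma_g\le C$ to show $\int_\Sigma u_{\epsilon_j}^{-\epsilon_j}\,d\sigma_g\to1$.
\end{enumerate}

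Conversely, what you call the main obstacle is not one here. Since $\lambda_2(u)<0$, the boundary term $\lambda\phi^2u^{N-2}$ has a favorable sign. The Moser iteration of Proposition~\ref{Estimates} (see Remark~\ref{RMoserBound}) then gives $L^\infty(\Sigma)$ bounds on $W^{1,2}$-bounded normalized eigenfunctions that depend only on $h_g$, not on $u$. Together with weak $L^{n-1}$ convergence of $u_j^{N-2}$, this already lets you pass to the limit in $\int_\Sigma\phi_j^2u_j^{N-2}\,d\sigma_g$. No bubble analysis, concentration--compactness, or comparison with $Q(\mathbb{B}^n,\mathbb{S}^{n-1},[g_E])$ is needed.

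Likewise, the ``strict gap'' $\sup F_2>Q(\overline M,\Sigma,[g])$ is trivial: $F_2(1)=\lambda_2(1)>\lambda_1(1)=2c_nh_g=Q$ by simplicity. It plays no role.

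Your additive variations $u+t\psi$ do not preserve nonnegativity. They also produce the weight $u^{N-3}=u^{\frac{4-n}{n-2}}$, which is singular on $u^{-1}(0)$ for $n\ge5$. The multiplicative deformations $u(1+th)$ with $h\in L^\infty(\Sigma)$ avoid both problems.

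Once a maximizer in $L^N_{>0}(\Sigma)$ is in hand, the rest of your Step 4 essentially matches the paper's concluding claims: the Euler--Lagrange identity, the $L^\infty$ bounds and bootstrap, the positivity of $\hat u$, and the nodal-set bound.
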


Examples of smooth connected manifolds with smooth boundary for which our assumptions are satisfied are easy to construct. For instance, we can take a connected closed flat manifold, like a flat torus $\mathbb{T}^n$, remove a small geodesic ball and equip the resulting manifold with the restricted flat metric and its boundary with the outward unit normal. The resulting manifold has finite and negative $Q(\overline M, \Sigma, [g])$, and so $N([g])\ge 1$. Removing finitely many pairwise disjoint geodesic balls gives examples with
several boundary components, and if the balls have the same radius, then the boundary mean curvature has the same negative constant value on every component. To get higher $N([g])$, we can take a flat manifold $X^{n-1}$ with constant and negative mean curvature, as described above, and consider the product manifold $M^n = X^{n-1}\times S_l$ equipped with the product metric $g_{prod}$, where $S_l$ denotes the circle of radius $l>0$. By testing in the Rayleigh quotient the span of $1$, $\cos(s/l)$ and $\sin(s/l)$, the latter two defined on the circle factor, we obtain $N([g_{prod}])\ge 3$ after taking $l$ sufficiently large. 

We remark that (\ref{MT2}), and consequently Corollary \ref{NS-HM} below, holds for any maximizer of $F_2$; see remark after Proposition \ref{EulerEq}. An important consequence of Theorem \ref{MainTheorem} is Corollary \ref{NS-HM} below. Denote by $g_E$ the standard Euclidean metric on $\mathbb{R}^k$. We say that a map $\Phi: (\overline{M},g) \to (\mathbb{B}^k,g_E)$ with $\Phi(\Sigma) \subset \mathbb{S}^{k-1}$ is a free-boundary harmonic map if $\Delta_g \Phi = 0$ in $M$ and $\partial_\nu \Phi$ is parallel to $\Phi$ along $\Sigma$. We say that a map $\Phi:(\overline{M}, g)\to \mathbb{B}^k$ is weakly free-boundary harmonic if $\Phi(x)\in \mathbb{S}^{k-1}$ for a.e. $x\in \Sigma$ and if for any $V\in L^\infty\cap W^{1,2}(\overline M, \mathbb{R}^{k})$ with $V(x)\in T_{\Phi(x)}\mathbb{S}^{k-1}$ for a.e. $x\in \Sigma$, $\int_Mg(\nabla_g\Phi, \nabla_g V)\;dv_g = 0$; see \cite{Jost, LaurainPetrides} for a more general and formal definition. This definition can be generalized for metrics $g_u = u^{\frac{4}{n-2}}g$ which possibly degenerate on a (boundary) zero measure set on $\Sigma$, with the additional feature that now the mentioned Sobolev spaces will depend on the metric $g_{u}$; see discussion in Section \ref{Tlimit}.

\begin{corollary}\label{NS-HM}
Let $u\in C^{0,\beta}(\Sigma)\cap C^\infty(\Sigma\setminus u^{-1}(0))$ be a maximizer provided by Theorem \ref{MainTheorem}. Then
\begin{enumerate}
\item If $k=1$, then $u = |\phi|$ along $\Sigma$, where $\phi\in C^{1,\beta}(\Sigma)\cap C^\infty(M)$ is a sign-changing solution on $\Sigma$ of
\begin{equation}\label{NodalCase}
\begin{cases}
\Delta_g \phi &= 0 \text{ in } M,\\
B_g(\phi) &= \lambda_2(u)\phi |\phi|^{\frac{2}{n-2}} \text{ on }\Sigma.
\end{cases}
\end{equation}
\item Assume $k>1$ and set $\Sigma^*:=\Sigma\setminus u^{-1}(0)$, and recall that $\dim_{\mathcal{H}}(u^{-1}(0))\le n-2$. Then the map $\Phi=(\frac{\phi_1}{\hat u},\cdots, \frac{\phi_k}{\hat u}): (M^n, g_{\hat u})\to (\mathbb{B}^k,g_E)$ satisfies $\Phi(\Sigma^*)\subset \mathbb{S}^{k-1}$ and it is a harmonic map with $\partial_{\nu_{g_{\hat u}}}\Phi \| \Phi$ on $\Sigma^*$. Here $g_{\hat u} = \hat u^{\frac{4}{n-2}}g$ is the conformal metric defined by the harmonic extension $\hat u$ of $u$, positive in $M$, which agrees with $u^{\frac{4}{n-2}}g$ along $\Sigma$. Therefore, $\Phi$ is a $g_{\hat u}$-weakly free-boundary harmonic map.
\end{enumerate}
\end{corollary}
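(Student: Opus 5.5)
Both parts follow from the structure given by Theorem \ref{MainTheorem}: the maximizer $u$ and second generalized eigenfunctions $\phi_1,\dots,\phi_k$ with $\sum_i\phi_i^2=u^2$ on $\Sigma$. We first record what it means for each $\phi_i$ to be a second generalized eigenfunction for $u$. Since $R_g=0$, $\phi_i$ is $g$-harmonic in $M$, and on $\Sigma$ it solves
\[
B_g(\phi_i)=\lambda_2(u)\,u^{\frac{2}{n-2}}\phi_i .
\]
This is the weak form of $B_{g_u}(\phi_i/u)=\lambda_2(u)\,\phi_i/u$, obtained from the conformal covariance (\ref{ConfPropR}). The regularity $\phi_i\in C^{1,\beta}(\Sigma)\cap C^\infty(M)$ is already part of Theorem \ref{MainTheorem}.

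\emph{Case $k=1$.} Here $\phi_1^2=u^2$ on $\Sigma$, so $u=|\phi|$ with $\phi:=\phi_1$. Substituting $u^{\frac{2}{n-2}}=|\phi|^{\frac{2}{n-2}}$ into the eigenvalue equation gives (\ref{NodalCase}) on $\Sigma$, and $\Delta_g\phi=0$ in $M$ holds because $R_g=0$. It remains to show that $\phi$ changes sign on $\Sigma$. Since $N([g])\ge 2$ and $\lambda_2(u)$ is a maximum of $F_2$, we have $\lambda_2(u)<0$, and $\lambda_1(u)$ has a positive (Perron-type) generalized eigenfunction $\psi_1$. Pairing the equations for $\phi$ and $\psi_1$ and using the self-adjointness of the Dirichlet-to-Robin form gives
\[
\int_\Sigma u^{\frac{2}{n-2}}\phi\,\psi_1\,d\sigma_g=0 .
\]
Because $u^{\frac{2}{n-2}}\psi_1\ge 0$ and $u^{-1}(0)$ has measure zero, this forces $\phi$ to change sign. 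The delicate point is whether $\lambda_1(u)$ is simple with a positive eigenfunction when $u$ degenerates on a null set. I expect this to follow from the minimization characterization of $\lambda_1$ applied to $|\psi|$.

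\emph{Case $k>1$.} Set $\Phi=(\phi_1/\hat u,\dots,\phi_k/\hat u)$. Since $\hat u>0$ in $M$, $\Phi$ is smooth there. On $\Sigma^*$ we have $\hat u=u>0$, so $|\Phi|^2=\sum_i\phi_i^2/u^2=1$. For harmonicity, the conformal covariance (\ref{ConfPropCL}) with $R_g=0$ gives
\[
L_{g_{\hat u}}(\phi_i/\hat u)=\hat u^{-\frac{n+2}{n-2}}L_g(\phi_i)=0 .
\]
Hence $-\Delta_{g_{\hat u}}\Phi+c_nR_{g_{\hat u}}\Phi=0$ with
\[
c_nR_{g_{\hat u}}=\hat u^{-\frac{n+2}{n-2}}L_g\hat u=0,
\]
because $\hat u$ is the $g$-harmonic extension. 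So $\Delta_{g_{\hat u}}\Phi=0$ in $M$. On $\Sigma^*$, (\ref{ConfPropR}) gives
\[
B_{g_{\hat u}}(\phi_i/\hat u)=u^{-\frac{n}{n-2}}B_g(\phi_i)=\lambda_2(u)\,\phi_i/u,
\]
and $B_{g_{\hat u}}(\hat u/\hat u)=2c_nh_{g_{\hat u}}$. Therefore
\[
\partial_{\nu_{g_{\hat u}}}\Phi=\bigl(\lambda_2(u)-2c_nh_{g_{\hat u}}\bigr)\Phi ,
\]
which is parallel to $\Phi$.

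For the weak formulation, take an admissible $V$ tangent to the sphere a.e.\ on $\Sigma$ and integrate by parts on $M$ minus a small neighbourhood of $u^{-1}(0)$. The boundary term on $\Sigma^*$ vanishes because $\partial_\nu\Phi\cdot V=0$ there. Since $\dim_{\mathrm H}u^{-1}(0)\le n-2$, there are cutoff functions $\eta_\epsilon$ with $\int|\nabla\eta_\epsilon|^2\to0$ (capacity zero), so the contribution near the zero set also vanishes in the limit. The main obstacle is making this limit rigorous in the $g_{\hat u}$-weighted Sobolev spaces, since $g_{\hat u}$ degenerates on $u^{-1}(0)$. I would try the conformal invariance of the Dirichlet energy in the relevant form, together with the bound $|\Phi|\le 1$ in $M$ (by the maximum principle, since $\sum_i\phi_i^2$ is $g$-subharmonic and equals $\hat u^2$-type boundary data), to control the cutoff terms.
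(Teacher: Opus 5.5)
Part (1) and the pointwise assertions of part (2) agree with the paper (Claim~\ref{HM}). These assertions are: $g_{\hat u}$-harmonicity of $\Phi$ via (\ref{ConfPropCL}); $|\Phi|=1$ on $\Sigma^*$; and $\partial_{\nu_{g_{\hat u}}}\Phi=(\lambda_2(u)-2c_nh_{g_{\hat u}})\Phi$ via (\ref{ConfPropR}).

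The point you call delicate in part (1) is already settled. Claim~\ref{ZeroBoundaryMeasure} gives $u\in L^N_{>0}(\Sigma)$, and then Proposition~\ref{ExistenceSimplicity}(2) states that second generalized eigenfunctions change sign on $\Sigma$. Nonnegativity of $\psi_1$ would even suffice, since $|\phi|=u>0$ a.e.

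\textbf{The gap: weak free-boundary harmonicity.} Your cutoff argument rests on the claim that $\dim_{\mathrm H}u^{-1}(0)\le n-2$ yields cutoffs $\eta_\epsilon$ with $\int|\nabla\eta_\epsilon|^2\to0$. This does not follow. The dimension bound only gives $\mathcal{H}^s(Z)=0$ for $s>n-2$. At the borderline that is not enough for zero $W^{1,2}$-capacity; one needs something like $\mathcal{H}^{n-2}(Z)<\infty$, and there are sets of Hausdorff dimension $n-2$ with positive capacity. You also leave the weighted limit explicitly open.

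Two further points are missing or wrong.
\begin{enumerate}
\item You never check $\Phi\in W^{1,2}(M,g_{\hat u};\mathbb{R}^k)$. This follows from $\hat u^2|\nabla_g\Phi|^2\le 2(|\nabla_g\Psi|^2+|\nabla_g\hat u|^2)$ once $|\Phi|\le1$ is known.
\item Your justification of $|\Phi|\le 1$ fails. $\sum_i\phi_i^2$ and $\hat u^2$ are both subharmonic, so the maximum principle does not compare them. The correct argument: for each unit $v$, $g_E(\Psi,v)-\hat u$ is harmonic with nonpositive boundary values, hence $|\Psi|\le\hat u$.
\end{enumerate}

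\textbf{How the paper avoids cutoffs.} It rewrites the pairing as
\[
\int_M g_{\hat u}(\nabla_{g_{\hat u}}\Phi,\nabla_{g_{\hat u}}V)\,dv_{g_{\hat u}}=\sum_{i=1}^k\int_M\bigl(\hat u\,g(\nabla_g\phi_i,\nabla_gV_i)-\phi_i\,g(\nabla_g\hat u,\nabla_gV_i)\bigr)\,dv_g .
\]
Here $\hat uV_i\in W^{1,2}(M)$, and $\phi_iV_i\in W^{1,2}(M)$ because $|\phi_i|\le\hat u$.
\begin{itemize}
\item Test the weak form of (\ref{l9}) with $\hat uV_i$. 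That weak form holds on all of $\Sigma$, including $u^{-1}(0)$. After summing over $i$, the boundary integrand is $(\lambda_2u^{N-2}-2c_nh_g)\,u\sum_i\phi_iV_i$.
\item Test $\Delta_g\hat u=0$ with $\sum_i\phi_iV_i$, which lies in $W^{1,2}_0(M)$.
\end{itemize}
Both boundary contributions vanish because $\sum_i\phi_iV_i=u\,g_E(\Phi,V)=0$ a.e.\ on $\Sigma$. Subtracting the two identities gives exactly the pairing above, which is therefore zero.

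\textbf{Repairing your route.} If you want to keep the cutoff approach, exploit the weight. $\hat u$ is H\"older continuous up to $\Sigma$ with some exponent $\beta'>0$ and vanishes on $Z=u^{-1}(0)$. Cover $Z$ by finitely many balls $B(x_j,r_j)$ with $x_j\in Z$, and set $\eta=\min_j\eta_j$ with standard cutoffs. This gives $\int_M\hat u^2|\nabla_g\eta|^2\le C\sum_j r_j^{\,n-2+2\beta'}$, which can be made small because $\mathcal{H}^{n-2+2\beta'}(Z)=0$. The error term is then bounded by $\|V\|_\infty\,\|\hat u\nabla_g\Phi\|_{L^2(\operatorname{supp}\nabla\eta)}\,\|\hat u\nabla_g\eta\|_{L^2}$, which tends to $0$.
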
 

An example where situation (2) occurs is not trivial and will be the focus of the author's upcoming work. If $N([g]) = 2$, then $(1)$ occurs and we have a sign-changing solution to \ref{NodalCase}. Positive solutions of this equation have been widely studied, but the problem of finding sign-changing solutions remains largely open. In \cite{ClappPellacciPistoia}, the existence of sign-changing solutions for equations that include (\ref{NodalCase}) is proved on manifolds for which $Q(\overline M,\Sigma,[g])>0$. Our manifolds, on the other hand, satisfy $-\infty< Q(\overline M, \Sigma, [g])<0$. 

Although Robin eigenvalue optimization has been studied, the standard Robin problem is not conformally invariant. To our knowledge, eigenvalue optimization for the eigenvalues of the conformal Dirichlet-to-Robin map is new. It would be interesting to study the cases where $N([g])=1$ or $N([g]) = 0$, and try to show existence of minimizers in the space of generalized conformal metrics as it was done in \cite{Ammann,ElSayed} for closed manifolds. However, in such cases, based on variational properties of the eigenvalue functional, we do not expect any connection with existence of free-boundary harmonic maps into Euclidean balls; see Remark 6.1 in \cite{Gursky}. 

In the case of surfaces with boundary, a substantial theory has been
developed for the maximization of Steklov eigenvalues; see for instance \cite{Fraser2,Petrides2,VinokurovSymmetries} and references therein. Under suitable assumptions, maximizing metrics are associated with free-boundary minimal immersions into Euclidean balls; see Theorem 1 in \cite{Petrides2}. The connection with free-boundary harmonic maps into Euclidean balls in this case arises if we maximize the $k$-th Steklov eigenvalue in the space of conformal metrics with fixed boundary length; see Proposition 2.8 in \cite{Fraser}. For some existence and regularity results for maximization of Steklov eigenvalues in higher dimensions, see the work of Vinokurov in \cite{VinokurovHigher}. There, however, the free-boundary harmonic maps obtained are into the infinite-dimensional ball $\mathbb{B}^\infty$, and it is only known to be finite dimensional when the domain manifold $\overline{M}$ is low-dimensional. In our case, due to a universal bound on the multiplicity, the target Euclidean ball is always finite dimensional. 

Related variational problems for other conformally covariant operators have been studied; some works include \cite{GonzalezSaez, HumbertPetridesPremoselli, PerezAyalaSireXu, PerezAyalaPaneitz}.

\subsection{Plan of the paper} In the next section, we present some definitions, discuss important conformal properties (some introduced in \cite{Cox}), and give a detailed treatment of generalized eigenvalues and eigenfunctions. We remark that Proposition \ref{Estimates} could be of independent interest; there we derive interior and boundary $L^\infty$ estimates in a scenario where the potential function lies in $L^{n-1}(\Sigma)$ but has a fixed sign. The proof of Theorem \ref{TFirstEigenvalue} is discussed in Section \ref{FirstEigenvalue}. In Section \ref{FirstVariationFormulas} we derive first variation type formulas for the normalized eigenvalue functional. The difficulty here comes from the non-differentiability of $g_u\longmapsto \lambda_2(\D_{g_u})$, although its derivation is now standard in the literature. 

The proof of the main theorem, Theorem \ref{MainTheorem}, is contained in Sections \ref{eRegularization}, \ref{UniformEstimates}, and \ref{Tlimit}. The techniques for its proof are motivated by those introduced by the author and M.J. Gursky in \cite{Gursky}. However, there are several new key features and complications. Here the generalized conformal factor is defined on the boundary, and so compactness is governed by the critical Sobolev trace embedding instead. Moreover, the conformal Dirichlet-to-Robin map is an elliptic pseudodifferential operator of order one on the boundary, so both the regularity theory and the analysis of boundary eigenfunctions require new arguments in many parts; see for instance Proposition \ref{ExistenceSimplicity} about boundary positivity (a.e.) of the first generalized eigenfunction or Proposition \ref{u-HolderBound} for a uniform boundary estimate for the sequence of extremals. A particularly delicate point is the possibility that the limiting conformal
factor vanishes along the boundary. Unique continuation principles from sets of positive measure on the boundary are quite subtle; we rely on a recent result by Z. Li in \cite{Li} to discuss the size of the zero set of our extremal function $u$ in Section \ref{Tlimit}. From there we obtain regularity and show the maximality of our limiting function.
\section{Preliminaries}\label{Preliminaries}

\noindent {\bf Assumptions and conventions.} We assume $\overline{M}$ is a compact connected Riemannian manifold of dimension $n\ge 3$ with smooth boundary $\Sigma:= \partial M$ and interior $M$, equipped with a conformal class $[g]$. With the exception of Section \ref{FirstEigenvalue}, we always assume that the Type II Yamabe invariant $Q(\overline{M},\Sigma,[g])$ is finite and negative. Therefore, and as a consequence of Proposition 1.4 in \cite{Escobar2}, up to a conformal change of metric, we can assume that the background metric $g$ satisfies $R_g=0$,  $h_g$ is a negative constant, and $\text{Vol}(\Sigma,d\sigma_g)=1$. As noticed in \cite{Escobar3}, the finiteness of $Q(\overline{M},\Sigma, [g])$ is equivalent to $\lambda_1(L_g^D)$ being positive; consequently, $0\not\in\text{Spec}(L_g^D)$. In particular, the $L_g$-harmonic extension discussed in (\ref{Lg-HarmonicExtension}) is unique. Finally, we assume that $N([g])\ge 2$ and $0\not\in \text{Spec}(\D_g)$.

We usually omit writing the Riemannian volume measure $dv_g$ or the Riemannian boundary measure $d\sigma_g$ when dealing with Sobolev spaces. That is, we write $W^{1,2}(M)$, $L^q(M)$, and $L^q(\Sigma)$ instead of $W^{1,2}(M,dv_g)$, $L^q(M,dv_g)$, and $L^q(\Sigma, d\sigma_g)$, respectively. 

\subsection{$\mathcal{H}_g$ under conformal transformation} 

Let $u\in C^{\infty}(\overline{M})$ be a strictly positive function and consider a conformal metric $g_u=u^{\frac{4}{n-2}}g \in [g]$. The outward unit normal derivative and the mean curvature on $\Sigma$ transform according to
 \begin{equation}
 \partial_{\nu_{g_u}} = u^{-\frac{2}{n-2}}\partial_{\nu_g} \hspace{.2in}\text{ and }\hspace{.2in}h_{g_u} = u^{-\frac{2}{n-2}}\left(h_g + \frac{2(n-1)}{n-2}u^{-1}\partial_{\nu_g}u\right).
 \end{equation}
 In particular, $\nu_{g_u}$ does not change direction. These imply that $B_g$ is conformally covariant in the following sense: for any $\phi\in C^\infty(\overline{M})$, 
 \begin{equation}\label{RCL}
 B_{g_u}(\phi) = u^{-\frac{n}{n-2}}B_g(u\phi).
 \end{equation}
 
 We proceed to explain the conformal properties of the operator $\HH_g$. As a consequence, we also derive the conformal transformation law for $\D_g$. Suppose we are given a function $\phi\in C^\infty(\Sigma)$. Since $0\not\in\text{Spec}(L_g^D)$, there exists a unique function $\HH_{g_u}(\phi)\in C^\infty(\overline{M})$ such that $L_{g_u}(\HH_{g_u}(\phi)) = 0$ (\cite{Cox,Lassas}). Then we take $\psi := u_{|_{\Sigma}}\phi \in C^\infty(\Sigma)$, and notice that $u\HH_{g_u}(\phi)$ is the correct extension with respect to $L_g$. Indeed, $(u\HH_{g_u}(\phi))_{|_{\Sigma}} = u_{|_{\Sigma}}\phi = \psi$ on $\Sigma$ and, due to the conformal transformation (\ref{ConfPropCL}),
\begin{equation}
L_g(u\HH_{g_u}(\phi)) = u^{\frac{n+2}{n-2}}L_{g_u}(\HH_{g_u}(\phi)) = 0.
\end{equation} 
By uniqueness, we therefore have
\begin{equation}
\HH_g(\psi) = \HH_g(u_{|_{\Sigma}}\phi)= u\HH_{g_u}(\phi),
\end{equation}
thus 
\begin{equation}
\begin{split}
\D_{g_u}(\phi) &= B_{g_u}(\HH_{g_u}(\phi)) = u^{-\frac{n}{n-2}}B_g(u\HH_{g_u}(\phi)) = u^{-\frac{n}{n-2}} B_g(\HH_g(\psi))\\ &= u^{-\frac{n}{n-2}} \D_g(\psi) = u^{-\frac{n}{n-2}}\D_g((u_{|_{\Sigma}})\phi).
\end{split}
\end{equation}
In summary,
\begin{equation}\label{DCL}
\D_{g_u}(\phi) = (u_{|_{\Sigma}})^{-\frac{n}{n-2}}\D_g((u_{|_{\Sigma}})\phi).
\end{equation}

These properties are discussed in \cite{Cox} and are only included here for the convenience of the reader.

\subsection{Generalized Eigenvalues and Eigenfunctions}

Previous work on maximizing eigenvalues on (closed) manifolds indicates that extremal metrics are expected to possess singularities; see \cite{Ammann, Gursky, KarpukhinNadirashviliPenskoiPolterovich, KarpukhinStern,Petrides, Petrides2} and references therein. Therefore, the class $[g]$ of smooth conformal Riemannian metrics is not appropriate for our purposes, and a different variational setting is needed (see \cite{Kokarev} for a further discussion on this topic).  We take the approach developed by Ammann-Humbert in \cite{Ammann} (see also \cite{ElSayed}) and by M. J. Gursky and the author in \cite{Gursky} via \textit{generalized eigenvalues}. The purpose of this section is to explain this approach. 

The variational characterization for $\lambda_k(\D_g)$ is given by
\begin{equation}\label{VarCar1}
\lambda_k(\D_g)= \inf_{S_k\subset W^{1,2}(M)} \sup_{\phi\in S_k,\;\phi|_\Sigma \not \equiv 0} \R_g(\phi),
\end{equation}
where $S_k$ is a $k$-dimensional subspace of functions on which the trace map is injective, and
\begin{equation}
\R_g(\phi) := \frac{\int_M |\nabla_g\phi|^2 \;dv_g+2c_n\int_\Sigma h_g\phi^2 \;d\sigma_g}{\int_\Sigma \phi^2 \;d\sigma_g}
\end{equation}
is the associated Rayleigh quotient. If we consider a conformal metric $g_u = u^{\frac{4}{n-2}}g$, then
\[
\begin{split}
\R_{g_u}(\phi) & = \frac{\int_M (|\nabla_{g_u}\phi|^2 + c_nR_{g_u}\phi^2)\;dv_{g_u}+2c_n\int_\Sigma h_{g_u}\phi^2 \;d\sigma_{g_u}}{\int_\Sigma \phi^2 \;d\sigma_{g_u}} \\ & = \frac{\int_M \phi L_{g_u}(\phi) \;dv_{g_u} + \int_\Sigma \phi\partial_{\nu_{g_u}}\phi\;d\sigma_{g_u}  + 2c_n\int_\Sigma h_g\phi^2u^2 \;d\sigma_g + \int_\Sigma \phi^2u\partial_{\nu_g}u\;d\sigma_g}{\int_\Sigma \phi^2 u^{\frac{2(n-1)}{n-2}}\;d\sigma_g} \\ &= \frac{\int_M (u\phi)L_g(u\phi)\;dv_g  + 2c_n\int_\Sigma h_g(u\phi)^2\;d\sigma_g + \int_\Sigma (u\phi)\langle\nabla_g(u\phi),\nu_g\rangle\;d\sigma_g}{\int_\Sigma (u\phi)^2u^{\frac{2}{n-2}}\;d\sigma_g} \\ &= \frac{\int_M |\nabla_g(u\phi)|^2 \;dv_g + 2c_n\int_\Sigma h_g(u\phi)^2\;d\sigma_g}{\int_\Sigma (u\phi)^2u^{\frac{2}{n-2}}\;d\sigma_g} \\ &= \frac{\int_M|\nabla_g\psi|^2 \;dv_g + 2c_n\int_\Sigma h_g\psi^2\;d\sigma_g}{\int_\Sigma \psi^2u^{\frac{2}{n-2}}\;d\sigma_g} := \R_g^u(\psi),
\end{split}
\]
where $\psi=u\phi$. One of the advantages of working with $\R_g^u$ is that the conformal factor $u$ appears only in the denominator, and so $\R_g^u$ only depends on $u$ through its boundary values. Using $\R_g^u$, we can write the variational characterization for $\lambda_k(u):= \lambda_k(\D_{g_u})$ as follows:
\begin{equation}\label{VarCar2}
\lambda_k(u)= \inf_{S_k\subset W^{1,2}(M)} \sup_{\phi\in S_k,\;\phi|_\Sigma \not \equiv 0} \R_g^u(\phi).
\end{equation}
In this convention, $\lambda_k(1)$ is the $k$-th eigenvalue with respect to the background metric $g$. Studying extremal functions for $\R_g^u$ leads to the new system
\begin{equation}\label{EigenR2}
\begin{cases}
L_g(\phi) &= 0  \;(\text{in }M) \\
B_g(\phi) &= \lambda\phi u^{\frac{2}{n-2}} \;(\text{on }\Sigma),
\end{cases}
\end{equation}
which can also be viewed as (\ref{EigenR}) under conformal change. This motivates the following discussion. 

Let $N=\frac{2(n-1)}{n-2}$, and define
\begin{equation}
L^N_{\ge 0}(\Sigma):= \{u\in L^N(\Sigma): u\ge 0\}\setminus \{0\}.
\end{equation}
We will refer to elements in $L^N_{\ge 0}(\Sigma)$ as \textit{generalized conformal factors}. Similarly, we define
\begin{equation}
L^N_{>0}(\Sigma):= \{u\in L^N(\Sigma): u^{-1}(0)\text{ has zero (boundary) measure}\}.
\end{equation}
The number $N$ is the critical exponent in the Sobolev trace embedding
\begin{equation}\label{STE}
W^{1,2}(M)\hookrightarrow L^q(\Sigma),
\end{equation}
meaning that (\ref{STE}) is compact for $1< q < N$ and only continuous for $q=N$. We repeatedly use Friedrichs's inequality (see Theorem 4.1.7 in \cite{Hsiao}), which states that
\begin{equation}\label{FriedrichsInequality}
\int_M \phi^2 \;dv_g \le C \left(\int_M |\nabla_g \phi|^2\;dv_g + \int_\Sigma \phi^2\;d\sigma_g\right)
\end{equation}
for any $\phi\in W^{1,2}(M)$. Also, we say that a $k$-dimensional subspace $S_k=\text{span}\{\phi_1,\cdots,\phi_k\}$ of $W^{1,2}(M)$ belongs to the $k$-th modified Grassmannian on the boundary, denoted by $Gr_k^u(W^{1,2}(M)) $, if and only if the functions $\phi_1u^{\frac{N-2}{2}},\cdots, \phi_k u^{\frac{N-2}{2}}$ are linearly independent on $\Sigma$. This linear independence is understood in $L^2(\Sigma)$.

We can now define what generalized eigenvalues are in this context.

\begin{definition}\label{GeneralizedEigen}
For $u\in L^N_{\ge 0}(\Sigma)$, we define the \textit{k-th generalized eigenvalue} $\lambda_k(u)$ by 
\begin{equation}\label{GeneralizedEigen2}
\lambda_k(u):= \inf_{S_k\in Gr^u_k(W^{1,2}(M))} \sup_{\phi\in S_k,\;\phi|_\Sigma \not \equiv 0} \R_g^u(\phi).
\end{equation}
If $\lambda_k(u)$ is finite, then a \textit{k-th generalized eigenfunction} $\phi_k$ means a weak $W^{1,2}(M)$- solution of the following system:
\begin{equation}\label{EigenValEq}
\begin{cases}
L_g(\phi) &= 0  \;(\text{in } M) \\
B_g(\phi) &= \lambda_k(u)\phi u^{N-2} \;(\text{on }\Sigma).
\end{cases}
\end{equation}
The collection of $k$-th generalized eigenfunctions associated to $u\in L^N_{\ge 0}(\Sigma)$ is called the \textit{k-th generalized eigenspace} and it is denoted by $E_k(u)$.
\end{definition}

\begin{remark}
Any generalized $k$-th eigenfunction $\phi$ cannot vanish on $\Sigma$, i.e. $\phi_{|_\Sigma}\not \equiv 0$. Otherwise, we would have $0\in\text{Spec}(L_g^D)$, contradicting our assumption. Also, thanks to (\ref{STE}), any function $\phi\in W^{1,2}(M)$ makes sense when restricted to the boundary, i.e. its trace is well defined. We abuse notation and write $\phi_{|_\Sigma}$ or ``$\phi$ on $\Sigma$'' instead of $T(\phi)$, where $T: W^{1,2}(M) \to L^q(\Sigma)$ is the trace operator.
\end{remark}

We start by discussing the number of negative generalized eigenvalues. In the smooth setting, the number $N([g])$ of negative eigenvalues for the Conformal Dirichlet-to-Robin map is a conformal invariant; see Section \ref{Introduction}. We prove that this is still the case when working with generalized conformal factors if we require $u^{-1}(0)$ to have zero (boundary) measure.  We will need the following observation made in \cite{Gursky}; see Lemma 2.2:

\begin{lemma}\label{TestFunctions}
Let $u,w\in L^N_{\ge 0}(\Sigma)$ be generalized conformal factors. If $\{x\in \Sigma: u(x)>0\}\subseteq \{x\in\Sigma: w(x)>0\}$, then $Gr_k^u(W^{1,2}(M)) \subseteq Gr_k^w(W^{1,2}(M))$.
\end{lemma}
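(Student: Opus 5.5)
The plan is to verify the inclusion directly from the definition of the modified Grassmannian. Fix $S_k=\text{span}\{\phi_1,\dots,\phi_k\}\in Gr_k^u(W^{1,2}(M))$, so that $\phi_1u^{\frac{N-2}{2}},\dots,\phi_ku^{\frac{N-2}{2}}$ are linearly independent in $L^2(\Sigma)$. We must show that $\phi_1w^{\frac{N-2}{2}},\dots,\phi_kw^{\frac{N-2}{2}}$ are linearly independent in $L^2(\Sigma)$ as well.

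First, all these products lie in $L^2(\Sigma)$. Indeed, $\phi_i\in L^N(\Sigma)$ by the trace embedding (\ref{STE}), and $u^{\frac{N-2}{2}},w^{\frac{N-2}{2}}\in L^{\frac{2N}{N-2}}(\Sigma)$. Hölder's inequality with $\frac1N+\frac{N-2}{2N}=\frac12$ then places each product in $L^2(\Sigma)$.

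Next comes the independence argument, by contrapositive. Suppose there are constants $c_1,\dots,c_k$ with $\sum_i c_i\phi_i w^{\frac{N-2}{2}}=0$ a.e. on $\Sigma$, and set $\phi=\sum_i c_i\phi_i$. Then $\phi w^{\frac{N-2}{2}}=0$ a.e., so $\phi=0$ a.e. on the set $\{w>0\}$. By hypothesis $\{u>0\}\subseteq\{w>0\}$, so $\phi=0$ a.e. on $\{u>0\}$. On the complement $\{u=0\}$ the factor $u^{\frac{N-2}{2}}$ vanishes; here $N>2$, so the exponent is positive. Hence $\phi u^{\frac{N-2}{2}}=0$ a.e. on all of $\Sigma$, i.e. $\sum_i c_i\phi_iu^{\frac{N-2}{2}}=0$ in $L^2(\Sigma)$. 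Independence with respect to $u$ forces all $c_i=0$.

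It follows that $S_k\in Gr_k^w(W^{1,2}(M))$. There is no real obstacle here. The only points needing care are that the sets $\{u>0\}$ and $\{w>0\}$ are defined up to null sets, so the inclusion is understood a.e., and that the products are genuinely in $L^2(\Sigma)$, which the Hölder step above settles.
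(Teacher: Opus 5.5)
Your proof is correct. The paper gives no argument of its own here and simply defers to Lemma 2.2 of \cite{Gursky} (the closed-manifold version); your contrapositive is exactly the direct argument that citation stands for, transplanted to traces on $\Sigma$: a vanishing $w$-weighted combination forces $\sum_i c_i\phi_i=0$ a.e.\ on $\{w>0\}\supseteq\{u>0\}$, hence a vanishing $u$-weighted combination since $\frac{N-2}{2}>0$. Your H\"older check that $\phi_i u^{\frac{N-2}{2}}$ and $\phi_i w^{\frac{N-2}{2}}$ lie in $L^2(\Sigma)$ is a sound extra detail that the paper leaves implicit.
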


We are now in a position to prove that the number of negative eigenvalues remains unchanged as long as our generalized conformal factors are in $L^N_{>0}(\Sigma)$; see Proposition 2.5 in \cite{Gursky}. We include the argument here for completeness.

\begin{proposition}\label{NegativeEigen}
If $u\in L^N_{> 0}(\Sigma)$, then the number of negative generalized eigenvalues associated to $u$ is $N([g])$.
\end{proposition}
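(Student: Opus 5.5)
The plan is to prove two inequalities, each by a min-max comparison that uses only the sign of the Rayleigh quotient. The key observation is that for $u\in L^N_{>0}(\Sigma)$ and $\phi\in W^{1,2}(M)$, the sign of $\R_g^u(\phi)$ equals the sign of the numerator
\[
Q_g(\phi):=\int_M|\nabla_g\phi|^2\,dv_g+2c_n\int_\Sigma h_g\phi^2\,d\sigma_g ,
\]
and this numerator does not involve $u$ at all. The same holds for the background factor $1$, whose quotient $\R_g^1=\R_g$ has eigenvalues $\lambda_k(\D_g)$. The constant function $1$ lies in $L^N_{>0}(\Sigma)$ and $u^{-1}(0)$ is null, so both $\{u>0\}$ and $\{1>0\}$ agree with $\Sigma$ up to a null set. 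Lemma \ref{TestFunctions} (or rather its proof, which only uses the positivity sets up to measure zero) then gives $Gr_k^u(W^{1,2}(M))=Gr_k^1(W^{1,2}(M))$. Thus both min-max problems range over the same family of subspaces: those $k$-dimensional $S_k$ whose traces are linearly independent in $L^2(\Sigma)$.

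\textbf{Step 1: at least $N([g])$ negative eigenvalues.} Set $m=N([g])$ and let $S_m$ be the span of the $L_g$-harmonic extensions of the first $m$ eigenfunctions of $\D_g$. For $\phi\in S_m$ with $\phi|_\Sigma\neq0$, we have $\R_g(\phi)\le \lambda_m(\D_g)<0$, so $Q_g(\phi)<0$. Since $\int_\Sigma\phi^2u^{N-2}\,d\sigma_g>0$ (the trace is nonzero on a set of positive measure, where $u>0$ a.e.), it follows that $\R_g^u(\phi)<0$.

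The subtle point is that we need a strictly negative supremum, not just pointwise negativity. I will use homogeneity: restrict to the unit sphere of $S_m$ in $W^{1,2}(M)$, which is compact since $S_m$ is finite dimensional. On this sphere $Q_g$ is continuous with maximum $-a<0$. The map $\phi\mapsto\int_\Sigma\phi^2u^{N-2}\,d\sigma_g$ is bounded above by $C\|\phi\|^2_{W^{1,2}}\|u\|_{L^N}^{N-2}$, using H\"older with exponents $N/2$ and $N/(N-2)$ together with the trace embedding (\ref{STE}). Hence $\sup_{S_m}\R_g^u\le -a/C'<0$, and therefore $\lambda_m(u)<0$.

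\textbf{Step 2: at most $N([g])$ negative eigenvalues.} Suppose instead $\lambda_{m+1}(u)<0$. Then there is $S_{m+1}\in Gr^u_{m+1}=Gr^1_{m+1}$ on which $\R_g^u<0$ for every $\phi$ with nonzero trace. Every $\phi\in S_{m+1}\setminus\{0\}$ has nonzero trace, because the traces are linearly independent. Hence $Q_g<0$ on $S_{m+1}\setminus\{0\}$, and the same compactness argument as in Step 1 gives $\sup_{S_{m+1}}\R_g<0$. This yields $\lambda_{m+1}(\D_g)<0$, contradicting the definition of $m$.

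\textbf{Main obstacle.} The delicate part is the measure-theoretic identification of the two Grassmannians and of nonvanishing traces. Concretely, $\phi u^{(N-2)/2}$ vanishes in $L^2(\Sigma)$ if and only if $\phi|_\Sigma=0$ a.e.; this holds exactly because $u^{-1}(0)$ is null, and it is precisely where the hypothesis $u\in L^N_{>0}(\Sigma)$ enters. The rest is routine.
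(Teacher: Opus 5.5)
Your proposal is correct and follows the paper's argument. Like the paper, you identify $Gr_k^u(W^{1,2}(M))=Gr_k^1(W^{1,2}(M))$ using that $u^{-1}(0)$ is null, and then compare, in both directions, the signs of the Rayleigh quotients through the $u$-independent numerator. Your compactness step on the unit sphere of the finite-dimensional subspace makes explicit the strict negativity of the supremum (and the attainment of the maximum) that the paper leaves to ``a similar argument.''
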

\begin{proof}
We want to show that $N(u) = N([g])$, where $N(u)$ denotes the number of negative generalized eigenvalues associated to $u$. Given any $k\in\mathbb{N}$, it is enough to prove that $\lambda_k(1)<0$ if and only if $\lambda_k(u)<0$. If $\lambda_k(1)\ge 0$, then 
\begin{equation}
\sup_{\phi\in S_k, \;\phi|_\Sigma \not \equiv 0} \R_g^1(\phi)\ge 0,
\end{equation} 
for all $S_k\in Gr_k^1(W^{1,2})$. By Lemma \ref{TestFunctions}, $Gr^u_k(W^{1,2}(M))\subseteq Gr_k^1(W^{1,2}(M))$, and thus $\lambda_k(u)\ge 0$. This shows that $N([g])\ge N(u)$. To prove the opposite inequality, just notice that, because $u^{-1}(0)$ has zero measure, we have $Gr_k^u(W^{1,2}(M)) = Gr_k^1(W^{1,2}(M))$. A similar argument then shows that $N(u)\ge N([g])$.
\end{proof}

Our next goal is to discuss the existence of generalized eigenfunctions. In order to do so, it is necessary to first discuss the finiteness of $u\mapsto \lambda_1(u)$. This is nontrivial as there are examples of smooth generalized conformal factors $u\in L^N_{\ge 0}(\Sigma)$ for which $\lambda_1(u) = -\infty$; this follows from an argument similar to the one used in Proposition 2.1 of \cite{ElSayed}, with the adjustment that the constructed $\epsilon$-ball needs to be centered at a boundary point.

\begin{proposition}\label{finite}
If $u\in L^N_{> 0}(\Sigma)$, then $\lambda_1(u)>-\infty$. Moreover, any sequence $\{\psi_j\}_{j=1}^\infty\subset W^{1,2}(M)$, normalized so that $\int_\Sigma \psi_j^2u^{N-2}\;d\sigma_g = 1$, satisfying 
\begin{equation}\label{finite0}
\int_M |\nabla_g\psi_j|^2\;dv_g + 2c_nh_g\int_\Sigma \psi_j^2\;d\sigma_g =: \lambda_j < 0,
\end{equation}
is bounded in $W^{1,2}(M)$.
\end{proposition}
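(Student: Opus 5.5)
The plan is to prove the boundedness statement first, by a contradiction and compactness argument, and then read off $\lambda_1(u)>-\infty$ from it. Throughout, the normalization of the background metric is used: $R_g=0$ and $h_g<0$ is constant. Hence the only negative contribution to the numerator of $\R_g^u$ is the unweighted boundary term $2c_nh_g\int_\Sigma\psi^2\,d\sigma_g$. The difficulty is that the denominator carries the weight $u^{N-2}$, which may be small on large sets.

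\emph{Step 1: reduction to the boundary $L^2$ norm.} Let $\{\psi_j\}$ satisfy \eqref{finite0} with $\int_\Sigma\psi_j^2u^{N-2}\,d\sigma_g=1$. Since $\lambda_j<0$, we get
\[
\int_M|\nabla_g\psi_j|^2\,dv_g< 2c_n|h_g|\int_\Sigma\psi_j^2\,d\sigma_g .
\]
Combining this with Friedrichs's inequality \eqref{FriedrichsInequality} gives
\[
\|\psi_j\|_{W^{1,2}(M)}^2\le C\,t_j^2,\qquad t_j:=\|\psi_j\|_{L^2(\Sigma)} .
\]
So it suffices to show that $t_j$ stays bounded.

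\emph{Step 2: contradiction by compactness.} Suppose, after passing to a subsequence, that $t_j\to\infty$, and set $\varphi_j:=\psi_j/t_j$. Then $\{\varphi_j\}$ is bounded in $W^{1,2}(M)$, $\int_\Sigma\varphi_j^2\,d\sigma_g=1$, and $\int_\Sigma\varphi_j^2u^{N-2}\,d\sigma_g=t_j^{-2}\to0$. Extract a subsequence with $\varphi_j\rightharpoonup\varphi$ weakly in $W^{1,2}(M)$. Because the trace embedding \eqref{STE} is compact for $q=2<N$, we have $\varphi_j\to\varphi$ strongly in $L^2(\Sigma)$, so $\int_\Sigma\varphi^2\,d\sigma_g=1$. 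Passing to a further subsequence, $\varphi_j\to\varphi$ a.e. on $\Sigma$.

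\emph{Step 3: the weighted term and the contradiction.} This is the main obstacle. We cannot pass to the limit directly in $\int_\Sigma\varphi_j^2u^{N-2}$: that would require strong $L^N(\Sigma)$ convergence of the traces, which fails at the critical exponent. Only a one-sided bound is needed, however, and Fatou's lemma supplies it:
\[
\int_\Sigma\varphi^2u^{N-2}\,d\sigma_g\le\liminf_{j\to\infty}\int_\Sigma\varphi_j^2u^{N-2}\,d\sigma_g=0 .
\]
Hence $\varphi=0$ a.e. on $\{u>0\}$. Since $u\in L^N_{>0}(\Sigma)$, the set $u^{-1}(0)$ has zero measure, so $\varphi=0$ a.e. on $\Sigma$. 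This contradicts $\int_\Sigma\varphi^2\,d\sigma_g=1$. Therefore $\sup_j t_j<\infty$, and by Step 1 the sequence is bounded in $W^{1,2}(M)$.

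\emph{Step 4: finiteness of $\lambda_1(u)$.} If $\lambda_1(u)\ge0$ there is nothing to prove. Otherwise, suppose $\lambda_1(u)=-\infty$. Then there are $\psi_j$ with $\R_g^u(\psi_j)\to-\infty$; in particular $\R_g^u(\psi_j)<0$, and we normalize so that $\int_\Sigma\psi_j^2u^{N-2}=1$. For such $\psi_j$, $\R_g^u(\psi_j)$ equals the quantity $\lambda_j$ in \eqref{finite0}, and the boundedness just proved gives a uniform bound $\int_\Sigma\psi_j^2\,d\sigma_g\le C$. It follows that
\[
\lambda_j\ge 2c_nh_g\int_\Sigma\psi_j^2\,d\sigma_g\ge -2c_n|h_g|\,C ,
\]
which contradicts $\lambda_j\to-\infty$. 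Hence $\lambda_1(u)>-\infty$.
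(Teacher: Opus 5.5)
Your proof is correct and follows essentially the paper's argument. Both rescale a hypothetically unbounded sequence, extract a weak $W^{1,2}(M)$ limit, apply Fatou's lemma to the weighted boundary term together with $u>0$ a.e.\ to force the limit's trace to vanish, and then deduce $\lambda_1(u)>-\infty$ from the resulting bound on $\int_\Sigma\psi_j^2\,d\sigma_g$. The only difference is the normalization. You first use Friedrichs's inequality to reduce to the boundary norm and then divide by $\|\psi_j\|_{L^2(\Sigma)}$, which makes the contradiction immediate. The paper divides by the $W^{1,2}(M)$ norm and needs an extra step: the gradients tend to zero, so the limit is constant, hence zero. Your normalization is the one the paper itself uses later, in Proposition \ref{Boundedness-SeqConfFact}.
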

Notice that Proposition \ref{finite} implies that normalized minimizing sequences for $\lambda_1(u)$, or for higher order negative eigenvalues, are bounded in $W^{1,2}(M)$.
\begin{proof}
Recall that, by assumption, $N([g])\ge 2$, and so $\lambda_1(u)\le \lambda_2(u)<0$ by Proposition \ref{NegativeEigen}. Let $\{f_j\}_{j=1}^\infty \subset W^{1,2}(M)$ be a minimizing sequence for $\lambda_1(u)$:
\begin{equation}\label{finite1}
\frac{\int_M|\nabla_gf_j|^2 \;dv_g + 2c_n\int_\Sigma h_gf_j^2\;d\sigma_g}{\int_\Sigma f_j^2u^{N-2}\;d\sigma_g} := \lambda_j\le 0.
\end{equation}
By the homogeneity properties of the Rayleigh quotient, we can assume that the $f_j$'s are normalized such that 
\begin{equation}
\int_\Sigma f_j^2u^{N-2}\;d\sigma_g = 1.
\end{equation}
We claim that the sequence $\{f_j\}_{j=1}^\infty$ is bounded in $W^{1,2}(M)$. Note that this would be enough to prove boundedness of $\lambda_1(u)$ thanks to (\ref{STE}). Our argument is general and will also show the second part of the statement. 

Let us suppose on the contrary that $\|f_j\|_{W^{1,2}(M)} \to \infty$ as $j\to\infty$, and define $\tilde f_j := f_j\|f_j\|_{W^{1,2}(M)}^{-1}$.
The new sequence $\{\tilde f_j\}_{j=1}^\infty$ is bounded in $W^{1,2}(M)$ with constant norm equal to 1. By standard compactness arguments, we deduce the existence of a function $\tilde f \in W^{1,2}(M)$ such that 
\begin{equation}\label{finite2}
\begin{cases}
\tilde f_j &\rightharpoonup \tilde f \text{ in } W^{1,2}(M) \\
\tilde f_j &\rightarrow \tilde f \text{ in } L^2(M).
\end{cases}
\end{equation}
Moreover, by the Sobolev trace embedding theorem (\ref{STE}), we have
\begin{equation}\label{finite3}
\tilde f_j \rightarrow \tilde f \text{ in } L^2(\Sigma).
\end{equation}

First, it follows from Fatou's lemma that
\begin{equation}
0\le \int_\Sigma\tilde f^2 u^{N-2}\;d\sigma_g \le \liminf_{j\to\infty} \int_\Sigma\tilde f_j^2u^{N-2}\;d\sigma_g = 0,
\end{equation}
and therefore, since $u>0$ a.e. on $\Sigma$, 
\begin{equation}\label{finite4}
\tilde f = 0 \text{ on } \Sigma.
\end{equation}
On the other hand, from (\ref{finite1}) we deduce
\begin{equation}\label{finite5}
\int_M|\nabla_g\tilde f_j|^2\;dv_g + 2c_n\int_\Sigma h_g\tilde f_j^2\;d\sigma_g = \lambda_j\int_\Sigma \tilde f_j^2u^{N-2}\;d\sigma_g\le0,
\end{equation}
and thus
\begin{equation}
\int_M|\nabla_g\tilde f_j|^2\;dv_g \le 2c_n (-h_g)\int_\Sigma \tilde f_j^2\;d\sigma_g.
\end{equation}
Using (\ref{finite2}), (\ref{finite3}), and the weak lower semicontinuity of the $W^{1,2}(M)$-norm, we obtain
\[
0\le \int_M|\nabla_g \tilde f|^2\;dv_g \le \liminf_{j\to\infty} \int_M |\nabla_g\tilde f_j|^2\;dv_g \le 2c_n\|h_g\|_\infty \lim_{j\to\infty} \int_\Sigma \tilde f_j^2\;d\sigma_g= 0
\]
This implies that $\tilde f$ is constant in $M$, and so $\tilde f \equiv 0$ on $\overline{M}$ by (\ref{finite4}). However, this is not possible:
\[
\begin{split}
1 & = \liminf_{j\to\infty} \left(\int_M|\nabla_g\tilde f_j|^2\;dv_g + \int_M\tilde  f_j^2\;dv_g\right) =\liminf_{j\to\infty}\int_M|\nabla_g\tilde f_j|^2\;dv_g + \int_M\tilde f^2\;dv_g \\ &= \int_M \tilde f^2\;dv_g.
\end{split}
\]
Therefore, the original sequence is bounded. This concludes the proof. Notice that for normalized sequences such as in (\ref{finite0}), we still have the correct sign as in (\ref{finite1}), and so the argument applies without requiring the boundedness of $\lambda_j$.
\end{proof}

\begin{proposition}\label{Boundedness-SeqConfFact}
Let $u_j\in L^N_{>0}(\Sigma)$ be a sequence of generalized conformal factors with $\int_\Sigma u_j^N\;d\sigma_g = 1$. If $\{\psi_j\}_{j=1}^\infty\subset W^{1,2}(M)$ with $\int_\Sigma \psi_j^2u_j^{N-2}\;d\sigma_g = 1$ is a sequence of weak solutions of  
\begin{equation}\label{Boundedness-SeqConfFact1}
\begin{cases}
\Delta_g(\psi_j) &= 0  \;(\text{in } M) \\
B_g(\psi_j) &= \lambda_j\psi_j u_j^{N-2} \;(\text{on }\Sigma).
\end{cases}
\end{equation}
and $0>\lambda_j\to \lambda$, then $\{\psi_j\}_{j=1}^\infty$ is bounded in $W^{1,2}(M)$.
\end{proposition}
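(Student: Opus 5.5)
The plan is to argue by contradiction, as in the proof of Proposition \ref{finite}. The difference is that the conformal factors $u_j$ now vary with $j$. So the vanishing of the weak limit on $\Sigma$ can no longer come from $u>0$ a.e.; instead it will come from the hypothesis $0\not\in\text{Spec}(\D_g)$.

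\textbf{Step 1 (energy identity).} Test the weak form of (\ref{Boundedness-SeqConfFact1}) against $\psi_j$ itself. Since $R_g=0$, we have $L_g=-\Delta_g$, and this gives
\[
\int_M|\nabla_g\psi_j|^2\,dv_g+2c_nh_g\int_\Sigma\psi_j^2\,d\sigma_g=\lambda_j\int_\Sigma\psi_j^2u_j^{N-2}\,d\sigma_g=\lambda_j<0 .
\]
Hence $\int_M|\nabla_g\psi_j|^2\le 2c_n|h_g|\int_\Sigma\psi_j^2$.

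\textbf{Step 2 (setup of the contradiction).} Suppose $\|\psi_j\|_{W^{1,2}(M)}\to\infty$ along a subsequence, and set $\tilde\psi_j:=\psi_j\|\psi_j\|_{W^{1,2}(M)}^{-1}$. After passing to a further subsequence:
\begin{itemize}
\item $\tilde\psi_j\rightharpoonup\tilde\psi$ weakly in $W^{1,2}(M)$;
\item $\tilde\psi_j\to\tilde\psi$ strongly in $L^2(M)$;
\item $\tilde\psi_j\to\tilde\psi$ strongly in $L^2(\Sigma)$, by the compactness of (\ref{STE}) for $q<N$.
\end{itemize}
Moreover, $\int_\Sigma\tilde\psi_j^2u_j^{N-2}=\|\psi_j\|^{-2}\to0$.

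\textbf{Step 3 (the limit equation).} This is the key step. For a test function $v\in W^{1,2}(M)$ the weak equation reads
\[
\int_M\la\nabla_g\tilde\psi_j,\nabla_g v\ra\,dv_g+2c_nh_g\int_\Sigma\tilde\psi_jv\,d\sigma_g=\lambda_j\int_\Sigma\tilde\psi_j v\,u_j^{N-2}\,d\sigma_g .
\]
By Cauchy--Schwarz and then Hölder with exponents $\frac N2$ and $\frac N{N-2}$, together with $\int_\Sigma u_j^N=1$, the right-hand side is bounded by
\[
|\lambda_j|\Big(\int_\Sigma\tilde\psi_j^2u_j^{N-2}\Big)^{1/2}\Big(\int_\Sigma v^2u_j^{N-2}\Big)^{1/2}\le |\lambda_j|\,\|\psi_j\|^{-1}\,\|v\|_{L^N(\Sigma)} .
\]
Since $\lambda_j\to\lambda$ is bounded and (\ref{STE}) holds at $q=N$, this tends to $0$. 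Passing to the limit on the left using the weak convergence, $\tilde\psi$ is a weak solution of $\Delta_g\tilde\psi=0$ in $M$ and $B_g\tilde\psi=0$ on $\Sigma$.

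\textbf{Step 4 (the limit vanishes).} If $\tilde\psi|_\Sigma\not\equiv0$, then (after elliptic regularity) $\tilde\psi|_\Sigma$ is an eigenfunction of $\D_g$ with eigenvalue $0$. This contradicts $0\not\in\text{Spec}(\D_g)$. So $\tilde\psi|_\Sigma=0$, and then $\tilde\psi$ is harmonic with zero Dirichlet data. Since $\lambda_1(L_g^D)>0$, it follows that $\tilde\psi\equiv0$.

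\textbf{Step 5 (contradiction).} Dividing Step 1 by $\|\psi_j\|^2$ gives
\[
\int_M|\nabla_g\tilde\psi_j|^2\le 2c_n|h_g|\int_\Sigma\tilde\psi_j^2\to 2c_n|h_g|\int_\Sigma\tilde\psi^2=0 .
\]
Also $\int_M\tilde\psi_j^2\to\int_M\tilde\psi^2=0$. Therefore $\|\tilde\psi_j\|_{W^{1,2}(M)}\to0$, which contradicts $\|\tilde\psi_j\|_{W^{1,2}(M)}=1$.

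The main obstacle is Step 3. We must show that the boundary term carrying the varying weight $u_j^{N-2}$ disappears in the limit. This relies only on the normalization $\int_\Sigma u_j^N=1$ and the critical (merely continuous) trace embedding, and that is exactly why the boundedness of $\lambda_j$ is needed.
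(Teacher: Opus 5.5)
Your argument is correct and is essentially the paper's proof: you rescale, pass to the limit in the weak formulation, and kill the right-hand side using $\int_\Sigma\psi_j^2u_j^{N-2}\,d\sigma_g=1$, $\int_\Sigma u_j^N\,d\sigma_g=1$ and the boundedness of $\lambda_j$, which contradicts $0\notin\text{Spec}(\D_g)$. The only difference is cosmetic: the paper first uses the energy identity and Friedrichs's inequality to reduce to boundedness in $L^2(\Sigma)$ and normalizes by $\|\psi_j\|_{L^2(\Sigma)}$, so the contradiction follows at once from $\int_\Sigma\tilde\psi^2\,d\sigma_g=1$, whereas your $W^{1,2}(M)$ normalization needs your extra Step 5.
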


\begin{proof}
Since 
\begin{equation}
\int_M|\nabla_g \psi_j|^2\;dv_g + 2c_nh_g\int_\Sigma \psi_j^2\;d\sigma_g = \lambda_j <0,
\end{equation}
we have $\int_M|\nabla_g\psi_j|^2\;dv_g < 2c_n(-h_g)\int_{\Sigma}\psi_j^2\;d\sigma_g$, and so, by Friedrichs inequality \ref{FriedrichsInequality}, it is enough to show boundedness in $L^2(\Sigma)$. To this end, let us assume the contrary, that is, suppose $\|\psi_j\|_{L^2(\Sigma)}\to \infty$ and set $\tilde \psi_j = \psi_j \|\psi_j\|_{L^2(\Sigma)}^{-1}$. From rescaling (\ref{Boundedness-SeqConfFact1}) we now obtain $\int_M|\nabla_g\tilde \psi_j|^2\;dv_g < 2c_n(-h_g)$, and so $\{\tilde \psi_j\}_{j=1}^\infty$ is bounded in $W^{1,2}(M)$ thanks to (\ref{FriedrichsInequality}). Denote by $\tilde \psi$ its limiting function obtained from standard compactness arguments. 

Using that
\begin{equation}\label{Boundedness-SeqConfFact2}
\int_M \langle\nabla_g\tilde \psi_j,\nabla_g\psi\rangle\;dv_g + 2c_nh_g\int_\Sigma \tilde \psi_j \psi\;d\sigma_g = \lambda_j \int_\Sigma \tilde \psi_j\psi u_{j}^{N-2}\;d\sigma_g
\end{equation}
holds for every $\psi\in C^\infty(\overline{M})$, weak convergence in $W^{1,2}(M)$ then implies that the left hand side of (\ref{Boundedness-SeqConfFact2}) converges to
\begin{equation}\label{Boundedness-SeqConfFact13}
\int_M\langle\nabla_g\tilde \psi,\nabla_g\psi\rangle \;dv_g + 2c_n h_g\int_\Sigma \tilde \psi\psi\;d\sigma_g. 
\end{equation}
On the other hand, we claim that the right hand side converges to zero. Indeed, since $\{\lambda_j\}_{j=1}^\infty$ is bounded, we deduce
\begin{equation}\label{Boundedness-SeqConfFact3}
\begin{split}
\left|\lambda_j\int_\Sigma \tilde \psi_j\psi u_{j}^{N-2}\;d\sigma_g\right| &\le C\|\psi\|_{L^\infty(\Sigma)} \int_\Sigma |\tilde \psi_j|u_{j}^{N-2}\;d\sigma_g \\ &\le \frac{C\|\psi\|_{L^\infty(\Sigma)}}{\|\psi_{j}\|_{L^2(\Sigma)}}\underbrace{\left(\int_\Sigma \psi_{j}^2u_{j}^{N-2}\;d\sigma_g\right)^{\frac{1}{2}}}_{=1}\underbrace{\left(\int_\Sigma u_{j}^{N-2}\;d\sigma_g\right)^{\frac{1}{2}}}_{\le 1} \\ & \to 0.
\end{split}
\end{equation}
Putting everything together, we have that for all $\psi\in C^\infty(\overline{M})$,
\begin{equation}\label{Boundedness-SeqConfFact4}
\int_M \langle\nabla_g \tilde \psi, \nabla_g \psi \rangle\;dv_g + 2c_nh_g\int_\Sigma \tilde \psi\psi \;d\sigma_g = 0. 
\end{equation}
Since $0\not\in \text{Spec}(\D_g)$, we conclude from (\ref{Boundedness-SeqConfFact4}) that $\tilde \psi_{|_\Sigma} = 0$. However, since $\|\tilde\psi_j\|_{L^2(\Sigma)} = 1$, we then have $\int_\Sigma \tilde \psi^2\;d\sigma_g = 1$ by strong convergence. This is a contradiction, thus $\{\psi_j\}_{j=1}^\infty$ is bounded in $L^2(\Sigma)$ and the proof is completed.
\end{proof}

We proceed with showing existence of generalized eigenfunctions and the simplicity of the first generalized eigenfunction. 

\begin{proposition}\label{ExistenceSimplicity}
If $u\in L^N_{> 0}(\Sigma)$, then 
\begin{enumerate}
\item (Existence) There exist $f_1$ and $\phi_2$ in $W^{1,2}(M)$ solving (\ref{EigenValEq}) in the sense of distributions with $\lambda=\lambda_1(u)$ and $\lambda=\lambda_2(u)$, respectively, and satisfying the following conditions:
\begin{equation}\label{EigenfunctionProp}
\int_\Sigma f_1^2u^{N-2}\;d\sigma_g = \int_\Sigma \phi_2^2u^{N-2}\;d\sigma_g = 1, \; \int_\Sigma f_1\phi_2u^{N-2}\;d\sigma_g = 0.
\end{equation}
\item (Simplicity) The zero set $f_1^{-1}(0)\cap \Sigma$ of the first eigenfunction $f_1$ restricted to $\Sigma$ has zero (boundary) measure, and so a second generalized eigenfunction $\phi_2$ changes sign on $\Sigma$. Moreover, $E_1(u)$ is one dimensional. 
\end{enumerate}
\end{proposition}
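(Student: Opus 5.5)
Existence will come from the direct method, using Proposition \ref{finite}; simplicity from a maximum-principle argument. Take a minimizing sequence $\{f_j\}$ for $\lambda_1(u)$ normalized by $\int_\Sigma f_j^2u^{N-2}\,d\sigma_g=1$. By Proposition \ref{finite} (we have $\lambda_1(u)<0$ by Proposition \ref{NegativeEigen}, so eventually the quotients are negative), $\{f_j\}$ is bounded in $W^{1,2}(M)$. Passing to a subsequence, $f_j\rightharpoonup f_1$ weakly in $W^{1,2}(M)$, strongly in $L^2(M)$, strongly in $L^q(\Sigma)$ for $q<N$, and a.e. on $\Sigma$.

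The main obstacle is the denominator, since $u^{N-2}\in L^{N/(N-2)}(\Sigma)$ pairs with $f_j^2$ exactly at the critical trace exponent, where compactness fails. I would handle it with the sign structure instead of compactness. Write $A(f)=\int_M|\nabla f|^2+2c_nh_g\int_\Sigma f^2$. The first term is weakly lower semicontinuous and the second converges, so $A(f_1)\le\liminf A(f_j)=\lambda_1(u)$. Fatou gives $D(f_1):=\int_\Sigma f_1^2u^{N-2}\le 1$. Since $\lambda_1(u)<0$, we get $A(f_1)\le\lambda_1(u)\le \lambda_1(u)D(f_1)$.

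1. $D(f_1)>0$: otherwise $f_1=0$ on $\Sigma$, since $u>0$ a.e. Then $A(f_1)=\int_M|\nabla f_1|^2\ge0$, contradicting $A(f_1)<0$.
2. The minimax then gives $A(f_1)\ge\lambda_1(u)D(f_1)$, so equality holds throughout. Hence $D(f_1)=1$ and $f_1$ attains the infimum.

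The Euler--Lagrange equation, tested against $W^{1,2}$ functions, is the weak form of (\ref{EigenValEq}). For $\phi_2$ I repeat the argument, minimizing over the constraint set $\int_\Sigma \phi f_1u^{N-2}=0$. To pass this constraint to the limit, I would first show that $f_1u^{N-2}$ is integrable against traces, e.g. $f_1\in L^N(\Sigma)$ by the trace embedding, so $f_1u^{N-2}\in L^{N'}(\Sigma)$ and the linear constraint is weakly continuous. Since $\lambda_2(u)<0$ as well, the same sign trick gives a limit with $D=1$. The usual orthogonality argument then identifies its Euler--Lagrange multiplier with $\lambda_2(u)$ via the minimax characterization (\ref{GeneralizedEigen2}).

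For simplicity, $|f_1|$ has the same Rayleigh quotient $\R^u_g$, so it is also a minimizer and a weak solution of (\ref{EigenValEq}) with $\lambda_1(u)<0$, $h_g<0$. Then $\Delta_g|f_1|=0$ in $M$ with $\partial_\nu|f_1|=(\lambda_1(u)u^{N-2}-2c_nh_g)|f_1|$ on $\Sigma$. To show $|f_1|>0$ a.e. on $\Sigma$, I would use the weak Harnack / strong maximum principle in the interior: $|f_1|$ is harmonic, nonnegative and nonzero on $\Sigma$, hence positive in $M$. For the boundary, suppose $Z=\{f_1=0\}\cap\Sigma$ has positive measure. Then $f_1$ and $|f_1|$ are both eigenfunctions with the same trace behaviour. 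The harmonic function $|f_1|-f_1\ge0$ has trace $2f_1^-$ and vanishing Cauchy data $(f,\partial_\nu f)$ on $Z$ in a weak sense. Here the unique-continuation-from-positive-measure-boundary-sets difficulty arises, and I would invoke a Hopf-type lemma or Li's result \cite{Li}. Granting that $f_1\ne0$ a.e. on $\Sigma$, connectedness and positivity of $|f_1|$ force $f_1$ to have a sign: $f_1^\pm$ are both minimizers, and their harmonic extensions are positive in $M$ if nonzero, yet $f_1^+f_1^-=0$ in $M$, contradiction. With $f_1>0$ a.e. on $\Sigma$, any two first eigenfunctions cannot be $u^{N-2}$-orthogonal, so $E_1(u)$ is one-dimensional. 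Finally $\int_\Sigma f_1\phi_2u^{N-2}=0$ with $f_1u^{N-2}>0$ a.e. forces $\phi_2$ to change sign on $\Sigma$.
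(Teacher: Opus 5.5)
Your existence argument is correct but takes a different route from the paper. The paper proves $\int_\Sigma f_j^2u^{N-2}\,d\sigma_g\to\int_\Sigma f_1^2u^{N-2}\,d\sigma_g$ directly, by truncating with $u_C=\min\{C,u\}$ and using H\"older plus dominated convergence. So for a fixed $u$ the weighted denominator is in fact compact, contrary to your remark. You instead combine $\lambda_1(u),\lambda_2(u)<0$, Fatou ($D\le 1$) and the min--max lower bound to force $D=1$. That is neat, but it depends on the negativity of the eigenvalues, which happens to be available here. Passing the orthogonality constraint through the bounded functional $\phi\mapsto\int_\Sigma \phi f_1u^{N-2}\,d\sigma_g$ is cleaner than the paper's estimate.

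The genuine gap is in part (ii), at exactly the step you flag and then grant: that $Z=\{f_1=0\}\cap\Sigma$ has measure zero. Everything else in (ii) rests on it: $\dim E_1(u)=1$ and the sign change of $\phi_2$. Neither tool you propose is available at this regularity. A Hopf lemma needs $f_1$ continuous up to $\Sigma$ with a pointwise normal derivative. Here $u$ is only in $L^N(\Sigma)$, so the Robin potential $\lambda_1(u)u^{N-2}-2c_nh_g$ lies merely in the scale-critical space $L^{n-1}(\Sigma)$, and $f_1$ is only in $W^{1,2}(M)\cap L^\infty$. Unique continuation from positive-measure boundary sets with such a rough potential is precisely the delicate issue. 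The paper invokes Li's theorem only later, once $u\in C^{0,\beta}(\Sigma)$ is known.

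The missing idea is an elementary logarithmic estimate. Take $f_1\ge 0$, which is harmonic and hence positive in $M$ by Harnack. Test the weak equation with $(f_1+\epsilon)^{-1}\in W^{1,2}(M)$. Since $h_g<0$ and $f_1/(f_1+\epsilon)\le 1$, this gives
\[
\int_M\frac{|\nabla_g f_1|^2}{(f_1+\epsilon)^2}\,dv_g=2c_nh_g\int_\Sigma\frac{f_1}{f_1+\epsilon}\,d\sigma_g-\lambda_1(u)\int_\Sigma\frac{f_1}{f_1+\epsilon}u^{N-2}\,d\sigma_g\le-\lambda_1(u)\int_\Sigma u^{N-2}\,d\sigma_g .
\]
By Fatou, $|\nabla_g\log f_1|\in L^2(M)$. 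Set $f_\epsilon=f_1/(f_1+\epsilon)$; since $|\nabla_g f_\epsilon|\le|\nabla_g\log f_1|$, dominated convergence gives $f_\epsilon\to 1$ in $W^{1,2}(M)$. Its trace vanishes on $Z$ yet tends to $1$ in $L^2(\Sigma)$, which forces $|Z|=0$. This argument needs nothing about $u$ beyond $u^{N-2}\in L^1(\Sigma)$.
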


\begin{proof}
We start with the existence of $f_1$. Let $\{f_j\}_{j=1}^\infty\subset W^{1,2}(M)$ be a minimizing sequence for $\lambda_1(u)$, and assume that for each $j\ge 1$, $f_j\ge 0$ and $\int_\Sigma f_j^2u^{N-2}\;d\sigma_g =1$. It follows from Proposition \ref{finite} that the sequence $\{f_j\}_{j=1}^\infty$ is bounded in $W^{1,2}(M)$. By standard compactness arguments, there exists a function $f_1\in W^{1,2}(M)$ such that $f_j \rightharpoonup  f_1 \text{ in } W^{1,2}(M)$, $f_j\rightarrow  f_1 \text{ in } L^2(M)$, and $f_j \rightarrow  f_1 \text{ in } L^2(\Sigma)$. Therefore, 
\[
\begin{split}
\int_M|\nabla_g f_1|^2\;dv_g + 2c_n\int_\Sigma h_g f_1^2\;d\sigma_g &\le \liminf_{j\to\infty}\left(\int_M|\nabla_g f_j|^2\;dv_g + 2c_n \int_\Sigma h_g f_j^2\;d\sigma_g\right) \\ & =\liminf_{j\to\infty}\R_g^u(f_j)= \liminf_{j\to\infty}\lambda_j\\ &= \lambda_1(u).
\end{split}
\]

In order to finish the proof, we need to argue that $\R_g^u(f_1) = \lambda_1(u)$. Thus, what remains to be shown is
\begin{equation}\label{EigenfunctionUnit}
\int_\Sigma f_1^2u^{N-2}\;d\sigma_g=1.
\end{equation}
Using $\int_\Sigma f_j^2u^{N-2}\;d\sigma_g = 1$ for all $j\ge 1$, we estimate as follows:
\begin{equation}
\begin{split}
\left|\int_\Sigma f_1^2u^{N-2}\;d\sigma_g - 1\right| & = \left|\int_\Sigma( f_1^2 -  f_j^2)u^{N-2}\;d\sigma_g\right| \\ &\le \underbrace{\int_\Sigma| f_1^2- f_j^2|u^{N-2}\;d\sigma_g}_{=A_j}.
\end{split}
\end{equation}
For any $C>0$ sufficiently large, define $u_C:=\inf\{C,u\}$. Then
\begin{equation}\label{ExistenceEstimate}
\begin{split}
A_j &\le \int_\Sigma |f_1^2-f_j^2||u_C^{N-2} - u^{N-2}|\;d\sigma_g + \int_\Sigma|f_1^2-f_j^2|u_C^{N-2}\;d\sigma_g \\ &\le \left(\int_\Sigma|f_1^2-f_j^2|^{\frac{N}{2}}\;d\sigma_g\right)^{\frac{2}{N}}\left(\int_\Sigma |u_C^{N-2} - u^{N-2}|^{\frac{N}{N-2}}\;d\sigma_g\right)^{\frac{N-2}{N}} \\ &\hspace{.15in}+C^{N-2}\int_\Sigma|f_1^2-f_j^2|\;d\sigma_g.
\end{split}
\end{equation}
Since $\{f_j\}_{j=1}^\infty$ is bounded in $W^{1,2}(M)$, we conclude via the Sobolev trace embedding (\ref{STE}) that the first factor of the first term in the last two lines is bounded. The second factor goes to zero as $C\to \infty$ by the Dominated Convergence Theorem, whereas the last term goes to zero by strong convergence in $L^2(\Sigma)$. This proves that $A_j$ goes to zero by first letting $j\to\infty$, and then $C\to\infty$. Hence, $\R^u_g(f_1) = \lambda_1(u)$ and we have the existence of a first generalized eigenfunction. 

To prove the existence of a second generalized eigenfunction $\phi_2$ satisfying (\ref{EigenfunctionProp}), we start with a normalized minimizing sequence $\{\phi_j\}_{j=1}^\infty\subset W^{1,2}(M)$ for $\tilde \lambda_2(u) := \inf \R_g^u(\phi)$, where the infimum is being taken over all $\phi \in W^{1,2}(M)$ satisfying 
\begin{equation}
\int_\Sigma  f_1\phi u^{N-2}\;d\sigma_g = 0.
\end{equation} 
By Lemma \ref{Char2} below, $0>\lambda_2(u) = \tilde \lambda_2(u)$. With this, we conclude that this sequence is bounded in $W^{1,2}(M)$ thanks to Proposition \ref{finite}. From standard compactness arguments, we get convergence properties as for the sequence $\{f_j\}_{j=1}^\infty$. Denote the limiting function by $\phi_2$. To prove the orthogonality condition, we compute
\begin{equation}
\begin{split}
\left|\int_\Sigma f_1\phi_2u^{N-2}\;d\sigma_g - 0\right|^2 & =\left|\int_\Sigma f_1\phi_2u^{N-2}\;d\sigma_g - \int_\Sigma f_1\phi_ju^{N-2}\;d\sigma_g\right|^2 \\ & =\left|\int_\Sigma\left(f_1u^{\frac{N-2}{2}}\right)(\phi_2 - \phi_j)u^{\frac{N-2}{2}}\;d\sigma\right|^2 \\ &\le \int_\Sigma f_1^2u^{N-2}\;d\sigma_g\cdot\int_\Sigma|\phi_2-\phi_j|^2u^{N-2}\;d\sigma_g,
\end{split}
\end{equation}
and the estimate of the second factor is as before (see estimates for $A_j$). Moreover, the same truncation argument used for $f_1$ shows that $\int_\Sigma\phi_2^2u^{N-2}\,d\sigma_g=1$. This completes the proof of part (i). 

We now proceed with the proof of part (ii). Notice that, by construction, the function $f_1$ is nonnegative in $M$. By continuity of the trace operator and approximation, its restriction to $\Sigma$ is also nonnegative. We want to show that it is positive almost everywhere on $\Sigma$, i.e. the zero set of its trace has zero boundary measure. 

Recall that the background metric $g$ is scalar flat, and so $f_1$ is a $W^{1,2}(M)$-solution of the system
\begin{equation}\label{ES1}
\begin{cases}
\Delta_g f_1 &= 0\; (\text{in } M)\\
B_g(f_1) &= \lambda_1(u) f_1u^{N-2}\; (\text{on } \Sigma).
\end{cases}
\end{equation}
By the Harnack inequality (see Corollary 8.21 in \cite{Gilbarg}), we conclude that $f_1>0$ almost everywhere in $M$. For each $\epsilon > 0$, consider the function 
\begin{equation}
\psi_\epsilon:= \frac{1}{f_1+\epsilon}.
\end{equation}
Since $\psi_\epsilon \in W^{1,2}(M)$, we can use it as a test function in (\ref{ES1}) and get
\begin{equation}
\lambda_1(u)\int_\Sigma \frac{f_1}{f_1+\epsilon}u^{N-2} \;d\sigma_g = -\int_M \frac{|\nabla_gf_1|^2}{(f_1+\epsilon)^2}\;dv_g +2c_n\int_\Sigma h_g\frac{f_1}{f_1+\epsilon}\;d\sigma_g.
\end{equation}
Since $h_g<0$, we derive
\begin{equation}
\begin{split}
\int_M \frac{|\nabla_gf_1|^2}{(f_1+\epsilon)^2}\;dv_g &\le (-\lambda_1(u))\int_\Sigma \frac{f_1}{f_1+\epsilon} u^{N-2}\;d\sigma_g\le (-\lambda_1(u)) \int_\Sigma u^{N-2}\;d\sigma_g,
\end{split}
\end{equation}
and an application of Fatou's lemma gives $|\nabla_g \log f_1|\in L^2(M)$. 

Finally, consider the function 
\begin{equation}
f_\epsilon = \frac{f_1}{f_1+\epsilon}.
\end{equation}
Using that $f_\epsilon >0$ a.e. in $M$, it follows that $f_\epsilon \to 1$ in $L^2(M)$. Also, $\nabla_g f_\epsilon \to 0$ a.e. in $M$. Since $|\nabla_g f_\epsilon|^2 \le |\nabla_g \log f_1|^2\in L^1(M)$, an application of the Dominated Convergence Theorem yields that $f_\epsilon \to 1$ in $W^{1,2}(M)$. Therefore, $f_\epsilon \to 1$ a.e. on $\Sigma$, thus $f_1$ is positive a.e. on $\Sigma$. The simplicity of $\lambda_1(u)$ now follows from standard arguments; see proof of Proposition 2.4, (ii) in \cite{Gursky}.
\end{proof}

\begin{lemma}\label{Char2}
If $u\in L^N_{> 0}(\Sigma)$, then 
\begin{equation} \label{char}
\lambda_2(u) = \inf \R_g^u(\phi),
\end{equation}
where the infimum is being taken over all $\phi \in W^{1,2}(M)$ with $\;\phi|_\Sigma \not \equiv 0$ and satisfying $\int_\Sigma \phi f_1u^{N-2}\;d\sigma_g = 0$. Here $f_1$ is the first eigenfunction associated to $\lambda_1(u)>-\infty$.  
\end{lemma}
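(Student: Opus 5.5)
Write $\tilde\lambda_2(u)$ for the right-hand side of (\ref{char}). The plan is to prove the two inequalities $\lambda_2(u)\le\tilde\lambda_2(u)$ and $\tilde\lambda_2(u)\le\lambda_2(u)$ separately. Each uses only the min-max definition (\ref{GeneralizedEigen2}), the existence of $f_1$ with $\R_g^u(f_1)=\lambda_1(u)$, and the fact that $f_1$ is a weak solution of (\ref{EigenValEq}). One caveat: the proof of Proposition \ref{ExistenceSimplicity} uses this lemma only for the second eigenfunction, so the existence of $f_1$ and its weak equation are available independently and there is no circularity.

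\emph{Upper bound $\lambda_2(u)\le\tilde\lambda_2(u)$.} Take any admissible $\phi$, so that $\phi|_\Sigma\not\equiv0$ and $\int_\Sigma\phi f_1u^{N-2}\,d\sigma_g=0$. Consider $S_2=\mathrm{span}\{f_1,\phi\}$. Because $u>0$ a.e., the functions $f_1u^{\frac{N-2}{2}}$ and $\phi u^{\frac{N-2}{2}}$ are nonzero in $L^2(\Sigma)$, and they are orthogonal there, so $S_2\in Gr_2^u(W^{1,2}(M))$. Write $Q(\psi)=\int_M|\nabla_g\psi|^2+2c_nh_g\int_\Sigma\psi^2$ for the numerator and $D$ for the denominator. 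For $\psi=af_1+b\phi$ the denominator splits by orthogonality:
\[
D(\psi)=a^2D(f_1)+b^2D(\phi).
\]
Testing the weak equation of $f_1$ against $\phi$ gives
\[
\int_M\langle\nabla f_1,\nabla\phi\rangle+2c_nh_g\int_\Sigma f_1\phi=\lambda_1(u)\int_\Sigma f_1\phi u^{N-2}=0,
\]
so the cross term in $Q$ vanishes as well. It follows that $\R_g^u(\psi)$ is a convex combination of $\lambda_1(u)$ and $\R_g^u(\phi)$. Since $\lambda_1(u)\le\R_g^u(\phi)$, this combination is at most $\R_g^u(\phi)$, hence $\sup_{S_2}\R_g^u=\R_g^u(\phi)$. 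Taking the infimum over admissible $\phi$ gives $\lambda_2(u)\le\tilde\lambda_2(u)$.

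\emph{Lower bound $\tilde\lambda_2(u)\le\lambda_2(u)$.} Take any $S_2\in Gr_2^u$. The linear functional $\psi\mapsto\int_\Sigma\psi f_1u^{N-2}\,d\sigma_g$ has nontrivial kernel on the two-dimensional space $S_2$, and this integral is finite by Hölder together with the trace embedding (\ref{STE}). So there is a nonzero $\psi\in S_2$ with $\int_\Sigma\psi f_1u^{N-2}=0$. The $Gr^u$ condition gives $\psi u^{\frac{N-2}{2}}\not\equiv0$, and in particular $\psi|_\Sigma\not\equiv0$, so $\psi$ is admissible. Therefore $\tilde\lambda_2(u)\le\R_g^u(\psi)\le\sup_{S_2}\R_g^u$, and taking the infimum over $S_2$ yields the claim.

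The main point needing care is the cross-term cancellation, which requires that $f_1$ is a genuine weak eigenfunction and that testing with arbitrary $\phi\in W^{1,2}(M)$ is legitimate. This is justified because $f_1u^{N-2}\phi\in L^1(\Sigma)$ by Hölder with exponents $N$, $N/(N-2)$, $N$ and the critical trace embedding. The weak equation itself follows from the minimality of $f_1$ by the usual first-variation argument: $\R_g^u(f_1+t\phi)\ge\lambda_1(u)$ for all $t$, with the denominator differentiable in $t$.

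Finally, the strict inequality $0>\lambda_2(u)$ used afterwards is not part of this proof. It comes from Proposition \ref{NegativeEigen} together with $N([g])\ge2$.
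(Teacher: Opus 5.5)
Your proof is correct and follows the paper's route. For one inequality, you pick a $u^{N-2}$-orthogonal element inside an arbitrary $S_2\in Gr_2^u(W^{1,2}(M))$. For the other, you test $S_2=\mathrm{span}\{f_1,\phi\}$ and use the weak equation of $f_1$ to kill the cross term, so $\R_g^u$ on $S_2$ is a convex combination of $\lambda_1(u)$ and $\R_g^u(\phi)$. You are in fact slightly more explicit than the paper in justifying the cross-term cancellation and the membership of $S_2$ in $Gr_2^u(W^{1,2}(M))$.
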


\begin{proof}
As in the proof of Proposition \ref{ExistenceSimplicity}, we denote the right hand side of (\ref{char}) by $\tilde \lambda_2(u)$. For any two dimensional subspace $S_2\in Gr_2^u(W^{1,2}(M))$, there is always a function $\tilde \phi\in S_2$ satisfying
\begin{equation}
\int_\Sigma \tilde \phi f_1u^{N-2}\;d\sigma_g = 0.
\end{equation}
Therefore,
\begin{equation}
\sup_{\phi\in S_2,\;\phi|_\Sigma \not \equiv 0}\R_g^u(\phi) \ge \R_g^u(\tilde \phi)\ge \tilde \lambda_2(u),
\end{equation}
and thus $\lambda_2(u)\ge \tilde \lambda_2(u)$.

On the other hand, let $\tilde \phi$ be any test function for $\tilde \lambda_2(u)$, and consider $S_2:=\text{span}\{\tilde\phi, f_1\}$. The orthogonality condition and $f_1>0$ a.e. on $\Sigma$ imply that $S_2\in Gr^u_2(W^{1,2}(M))$. We claim that  $\R_g^u(\tilde \phi) =\sup_{\phi\in S_2,\;\phi|_\Sigma \not \equiv 0}\R_g^u(\phi)$. Indeed, for any $a,b\in\mathbb{R}$, $\int_\Sigma \tilde f_1u^{N-2}\;d\sigma_g =0$ yields
\begin{equation}
\begin{split}
\R_g^u(a\tilde \phi +bf_1) &= \frac{a^2\R_g^u(\tilde \phi)\int_\Sigma \tilde \phi^2 u^{N-2}\;d\sigma_g + b^2\lambda_1(u)\int_\Sigma f_1^2u^{N-2}\;d\sigma_g}{a^2\int_\Sigma\tilde\phi^2u^{N-2}\;d\sigma_g + b^2\int_\Sigma f_1^2u^{N-2}\;d\sigma_g} \\ & \hspace{.15in}+\frac{2ab\lambda_1(u)\int_\Sigma\tilde\phi f_1\;u^{N-2}\;d\sigma_g}{a^2\int_\Sigma\tilde\phi^2u^{N-2}\;d\sigma_g + b^2\int_\Sigma f_1^2u^{N-2}\;d\sigma_g} \\ &\le \frac{a^2\R_g^u(\tilde \phi)\int_\Sigma \tilde \phi^2 u^{N-2}\;d\sigma_g + b^2\R_g^u(\tilde\phi)\int_\Sigma f_1^2u^{N-2}\;d\sigma_g}{a^2\int_\Sigma\tilde\phi^2u^{N-2}\;d\sigma_g + b^2\int_\Sigma f_1^2u^{N-2}\;d\sigma_g} \\ &=\R_g^u(\tilde\phi),
\end{split}
\end{equation}
and the claim follows because $\tilde \phi\in S_2$. Since $\sup_{\phi\in S_2, \;\phi|_\Sigma \not \equiv 0}\R_g^u(\phi)\ge \lambda_2(u)$, after taking the infimum over all such $\tilde \phi$ we conclude that $\tilde\lambda_2(u)\ge\lambda_2(u)$. This completes the proof.
\end{proof}

We finish this section with an important consequence of Proposition \ref{NegativeEigen}. For any generalized conformal factor $u$ in $L^N_{>0}(\Sigma)$, the associated second eigenspace $E_2(u)$, which is nontrivial by Proposition \ref{ExistenceSimplicity}, is finite dimensional. In fact, since $\text{dim }E_1(u) = 1$ by Proposition \ref{ExistenceSimplicity}, we have that $\text{dim }E_2(u)\le N([g]) - 1$. More generally, this is the case for any eigenspace associated with any negative eigenvalue. We record this as a corollary: 

\begin{corollary}\label{E2-FiniteDim}
If $u\in L^N_{>0}(\Sigma)$, then the associated second eigenspace $E_2(u)$ is finite dimensional. More precisely, $\text{dim }E_2(u) \le N([g]) - 1$.
\end{corollary}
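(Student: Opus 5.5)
The plan is to show that the direct sum of the first two generalized eigenspaces carries a form that is negative definite, so by Proposition \ref{NegativeEigen} its dimension is at most $N([g])$. We then subtract one for $E_1(u)$. Define the quadratic form
\[
Q(\phi):=\int_M|\nabla_g\phi|^2\,dv_g+2c_n\int_\Sigma h_g\phi^2\,d\sigma_g
\]
on $W^{1,2}(M)$. For $\phi,\psi\in E_1(u)\oplus E_2(u)$, testing the weak form of (\ref{EigenValEq}) against each other gives $Q(\phi,\psi)=\lambda_i(u)\int_\Sigma\phi\psi u^{N-2}\,d\sigma_g$.

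\textbf{Orthogonality.} For $\phi\in E_1(u)$ and $\psi\in E_2(u)$, use symmetry of $Q$. If $\lambda_1(u)\neq\lambda_2(u)$, this forces $\int_\Sigma\phi\psi u^{N-2}\,d\sigma_g=0$. If $\lambda_1(u)=\lambda_2(u)$, then $E_2(u)=E_1(u)$ and the bound is handled directly: $\dim E_2(u)=1\le N([g])-1$, using $N([g])\ge2$.

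\textbf{Dimension count.} Assume $\lambda_1(u)<\lambda_2(u)<0$ and that $E_2(u)$ contains $m$ linearly independent functions $\phi_1,\dots,\phi_m$. Set $V=\mathrm{span}\{f_1,\phi_1,\dots,\phi_m\}$. For a nonzero $\phi=af_1+\psi\in V$, the orthogonality gives
\[
Q(\phi)=a^2\lambda_1(u)\int_\Sigma f_1^2u^{N-2}\,d\sigma_g+\lambda_2(u)\int_\Sigma\psi^2u^{N-2}\,d\sigma_g<0,
\]
provided the weighted boundary norm does not vanish. Here we use that a generalized eigenfunction cannot vanish on $\Sigma$ (the Remark after Definition \ref{GeneralizedEigen}) and that $u>0$ a.e., so $\phi u^{\frac{N-2}{2}}\not\equiv0$. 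The same fact gives $V\in Gr^u_{m+1}(W^{1,2}(M))$.

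\textbf{Main obstacle.} We must show that $V$ is also admissible for $u\equiv1$. Because $u^{-1}(0)$ has measure zero, $Gr^u_{m+1}=Gr^1_{m+1}$, as in the proof of Proposition \ref{NegativeEigen}. Then $\R^1_g<0$ on $V$, so $\lambda_{m+1}(1)<0$ by (\ref{VarCar2}), which means $m+1\le N([g])$. Hence $\dim E_2(u)\le N([g])-1$. The most delicate point is the linear independence of the traces on $\Sigma$, and this is covered by the two facts above.
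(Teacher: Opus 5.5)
Your proof is correct and is essentially the paper's argument. The paper simply combines Proposition \ref{NegativeEigen} (exactly $N([g])$ negative generalized eigenvalues) with the simplicity of $E_1(u)$ from Proposition \ref{ExistenceSimplicity}; your test space $\mathrm{span}\{f_1\}\oplus E_2(u)$, together with $Gr^u_{m+1}=Gr^1_{m+1}$, spells out the min--max step the paper leaves implicit, and the only detail to add is that $\sup_V\R^1_g<0$ (not just pointwise negativity) follows from $\dim V<\infty$, e.g.\ by compactness of the unit sphere of $V$.
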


\subsection{$L^\infty$ estimates for Generalized Eigenfunctions}
Recall that, thanks to the Sobolev trace embedding theorem (\ref{STE}), if a function is in $W^{1,2}(M)$, then it is in $L^N(\Sigma)$. The following improvement for integrability along the boundary for weak solutions of (\ref{EigenValEq}) is a generalization of a result in \cite{Ammann}: \textit{let $u\in L^N_{\ge 0}(\Sigma)$ and $\phi \in W^{1,2}(M)$. If $\phi$ solves (\ref{EigenValEq}) in the sense of distributions, then $\phi \in L^{N+\epsilon}(\Sigma)$ for some $\epsilon>0$}. The techniques for proving it are very similar although in this case it involves having to control boundary integrals.   However, this is not enough for our purposes, thus we omit the proof. In the next proposition, we prove via Moser iteration techniques a stronger result in the case of generalized eigenfunctions associated with negative eigenvalues and a potential $u^{N-2}$ which is not assumed to be in $L^\infty(\Sigma)$.

\begin{proposition}\label{Estimates}
Let $u\in L^N_{\ge 0}(\Sigma)$ and $\phi\in W^{1,2}(M)$. Assume $\phi$ is a weak $W^{1,2}(M)$-solution of 
\begin{equation}\label{SEigenReg1}
\begin{cases}
L_g(\phi) &= 0 \;(\text{in }M)\\
B_g(\phi) &= -\phi u^{N-2} \; (\text{on }\Sigma).
\end{cases}
\end{equation}
Then 
\begin{enumerate}[(i)]
\item $\phi\in L^p(M)\cap L^p(\Sigma)$ for each finite $p\ge 1$, and
\item $\phi \in L^\infty(M)\cap L^\infty(\Sigma)$.
\end{enumerate}
\end{proposition}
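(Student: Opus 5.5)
We will run a Moser iteration that exploits the sign of the boundary potential. The weak formulation of (\ref{SEigenReg1}) with background $R_g=0$ reads
\[
\int_M \la\nabla_g\phi,\nabla_g\psi\ra\,dv_g + 2c_n\int_\Sigma h_g\phi\psi\,d\sigma_g = -\int_\Sigma u^{N-2}\phi\psi\,d\sigma_g
\]
for all admissible $\psi$. The key observation is that if $\psi$ is a monotone function of $\phi$ with the same sign as $\phi$, then $u^{N-2}\phi\psi\ge 0$. Hence the right-hand side has a favorable sign and can simply be discarded. This is what lets us avoid any smallness or integrability assumption on $u^{N-2}$ beyond $L^{N/(N-2)}=L^{n-1}$; the constants will not depend on $u$ at all. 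We first treat $\phi_+$; $\phi_-$ is handled symmetrically, since $-\phi$ solves the same system.

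\textbf{Step 1: the basic inequality.} We take truncated powers as test functions: for $L>0$ and $\beta\ge 1$, set $\psi=\phi_+\min(\phi_+,L)^{2(\beta-1)}$, which lies in $W^{1,2}(M)$. Write $w=\phi_+\min(\phi_+,L)^{\beta-1}$. The standard computation gives
\[
\int_M|\nabla_g w|^2\,dv_g\le C\beta\int_M\la\nabla_g\phi,\nabla_g\psi\ra\,dv_g\le C\beta\,(-2c_nh_g)\int_\Sigma w^2\,d\sigma_g ,
\]
after dropping the nonnegative term $\int_\Sigma u^{N-2}\phi\psi$ and using that $h_g$ is a negative constant.

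\textbf{Step 2: reverse H\"older on the boundary.} We combine Step 1 with the trace Sobolev inequality $\|w\|_{L^N(\Sigma)}^2\le C(\|\nabla_g w\|_{L^2(M)}^2+\|w\|_{L^2(\Sigma)}^2)$, which is (\ref{STE}) together with Friedrichs (\ref{FriedrichsInequality}). This yields
\[
\|w\|_{L^N(\Sigma)}^2\le C\beta\|w\|_{L^2(\Sigma)}^2 .
\]
Letting $L\to\infty$ by monotone convergence, this becomes $\|\phi_+\|_{L^{N\beta}(\Sigma)}\le (C\beta)^{1/2\beta}\|\phi_+\|_{L^{2\beta}(\Sigma)}$, valid whenever the right side is finite. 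Iterating with $\beta_k=(N/2)^k$, starting from $\phi\in L^2(\Sigma)$, gives $\phi\in L^p(\Sigma)$ for every finite $p$. Since $\sum_k\beta_k^{-1}\log(C\beta_k)<\infty$, the same iteration gives $\|\phi\|_{L^\infty(\Sigma)}\le C\|\phi\|_{L^2(\Sigma)}$. For the interior, Step 1 together with Friedrichs also bounds $\|w\|_{W^{1,2}(M)}$, so the interior Sobolev embedding gives $\phi\in L^p(M)$ for all finite $p$. This proves (i).

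\textbf{Step 3: interior $L^\infty$.} Since $\phi$ is harmonic in $M$ (as $R_g=0$), we can argue in two ways. One option is the weak maximum principle: $\phi_+$ is a subsolution and $\phi|_\Sigma\in L^\infty$, so $\sup_M\phi_+\le\sup_\Sigma\phi_+$ (cf.\ Gilbarg--Trudinger Thm.~8.1). Alternatively, we test with $(\phi-k)_+$ for $k\ge\|\phi\|_{L^\infty(\Sigma)}$, which lies in $W^{1,2}_0(M)$ and forces $\nabla(\phi-k)_+=0$. Either way $\phi\in L^\infty(M)$, which proves (ii).

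The main obstacle is making the test functions legitimate: $\psi$ must lie in $W^{1,2}(M)$, and the boundary term $\int_\Sigma u^{N-2}\phi\psi$ must be finite and of the right sign. The truncation at level $L$ handles membership and finiteness, since $|\phi\psi|\le L^{2\beta-2}\phi^2$, and $\phi^2u^{N-2}\in L^1(\Sigma)$ by H\"older together with $\phi\in L^N(\Sigma)$. The remaining care is in the chain-rule constant $C\beta$ under truncation, which must grow only polynomially in $\beta$; otherwise the iteration in Step 2 does not close.
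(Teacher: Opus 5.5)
Your proof is correct, but you run the Moser iteration in a different place than the paper does.

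\textbf{The paper's route.} The paper tests with $\phi\phi_h^{2k}$, $\phi_h=\min\{|\phi|,h\}$, which handles both signs at once. It discards the same favorably signed term $-\int_\Sigma \phi_h^{2k}\phi^2u^{N-2}\,d\sigma_g$ and then iterates in the \emph{interior}. To absorb the boundary term it uses Winkert's interpolation inequality $\|w\|^2_{L^2(\Sigma)}\le \epsilon\|w\|^2_{W^{1,2}(M)}+c_1\epsilon^{-c_2}\|w\|^2_{L^2(M)}$, with $c_1,c_2$ independent of $\epsilon$. This gives $\|w\|^2_{W^{1,2}(M)}\le C(1+k^{1+c_2})\|w\|^2_{L^2(M)}$. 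Moser iteration with the interior exponent $2^*$ then yields $L^\infty(M)$, and boundedness on $\Sigma$ is imported from Proposition 2.4 of Marino--Winkert.

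\textbf{Your route.} You iterate directly on $\Sigma$. The critical trace embedding (\ref{STE}), combined with (\ref{FriedrichsInequality}), turns your energy inequality into the reverse H\"older inequality $\|\phi_+\|_{L^{N\beta}(\Sigma)}\le (C\beta)^{1/2\beta}\|\phi_+\|_{L^{2\beta}(\Sigma)}$, with gain $N/2>1$ per step. The interior bound then comes from the weak maximum principle, since $\phi$ is harmonic and $(\phi-k)_+\in W^{1,2}_0(M)$ for $k\ge\|\phi\|_{L^\infty(\Sigma)}$.

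\textbf{What each buys.} Your argument is self-contained: it needs no interpolation inequality with tracked constants and no appeal to Marino--Winkert for the boundary. It also gives the explicit estimate $\|\phi\|_{L^\infty(M)}\le\|\phi\|_{L^\infty(\Sigma)}\le C(g)\|\phi\|_{L^2(\Sigma)}$ with $C$ independent of $u$. That makes transparent the uniformity used in Remark \ref{RMoserBound}, and, after absorbing $|\lambda_k(u)|$ into $u$, in Remark \ref{GenEigenBounds}. The paper's argument follows the Marino--Winkert template and obtains the interior bound without a maximum principle. Both arguments rely on the standing normalization $R_g=0$, $h_g<0$ constant.
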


\begin{proof}

The weak formulation of (\ref{SEigenReg1}) implies that 
\begin{equation}\label{SEigenReg2}
-\int_\Sigma \psi \phi u^{N-2}\;d\sigma_g = \int_M \la \nabla_g\psi,\nabla_g\phi \ra\;dv_g + 2c_n\int_\Sigma h_g\psi \phi\;d\sigma_g
\end{equation}
for all $\psi \in W^{1,2}(M)$. For $h>0$, define $\phi_h:= \inf\{|\phi|, h\}$, and take as a test function $\psi = \phi \phi_h^{2k}$, where $k> 0$. Plugging $\psi$ back into (\ref{SEigenReg2}) and rearranging gives
\begin{equation}\label{SEigenReg3}
\begin{split}
\int_M \phi_h^{2k}|\nabla_g\phi|^2\;dv_g &+ \underbrace{2k\int_M\phi_h^{2k-1}\phi\la\nabla_g\phi_h,\nabla_g \phi\ra\;dv_g}_{:= A_1}\\ &= -\int_\Sigma \phi_h^{2k}\phi^2u^{N-2}\;d\sigma_g - 2c_n\int_\Sigma h_g\phi_h^{2k}\phi^2\;d\sigma_g.
\end{split}
\end{equation}

The right-hand side of (\ref{SEigenReg3}) can be estimated by
\begin{equation}\label{SEigenReg4}
-\int_\Sigma \phi_h^{2k}\phi^2u^{N-2}\;d\sigma_g - 2c_n\int_\Sigma h_g\phi_h^{2k}\phi^2\;d\sigma_g \le 2c_n(-h_g)\int_\Sigma \phi_h^{2k}\phi^2\;d\sigma_g.
\end{equation}
Also, notice that $A_1$ on the left-hand side equals
\begin{equation}\label{SEigenReg4.1}
A_1 = 2k\int_{\{|\phi|\le h\}} \phi_h^{2k}|\nabla_g\phi|^2\;dv_g,
\end{equation}
where $\{|\phi|\le h\}: = \{x\in M: |\phi(x)|\le h\}$. The set $\{|\phi|>h\}$ is defined similarly. From (\ref{SEigenReg4}) and (\ref{SEigenReg4.1}) we then obtain
\begin{equation}\label{SEigenReg5}
\underbrace{\int_M \phi_h^{2k}|\nabla_g\phi|^2\;dv_g + 2k\int_{\{|\phi|\le h\}} \phi_h^{2k}|\nabla_g\phi|^2\;dv_g}_{:= A_2} \le C(g)\int_\Sigma\phi_h^{2k}\phi^2\;d\sigma_g,
\end{equation}
where $C(g) = 2c_n(-h_g)>0$. To estimate $A_2$, we proceed as follows:
\begin{equation}
\begin{split}
A_2 &= \int_{\{|\phi|>h\}}\phi_h^{2k}|\nabla_g\phi|^2\;dv_g + (2k+1)\int_{\{|\phi|\le h\}} \phi_h^{2k}|\nabla_g\phi|^2\;dv_g \\ &\ge \frac{2k+1}{(k+1)^2}\int_{\{|\phi|>h\}} \phi_h^{2k}|\nabla_g\phi|^2\;dv_g + (2k+1)\int_{\{|\phi|\le h\}} \phi_h^{2k}|\nabla_g\phi|^2\;dv_g
\end{split}
\end{equation}
On the other hand, using
\begin{equation}
|\nabla_g(\phi\phi_h^k)|^2 =
\begin{cases}
 (k+1)^2\phi_h^{2k}|\nabla_g\phi|^2 & \{|\phi|\le h\}\\
 \phi_h^{2k}|\nabla_g\phi|^2 & \{|\phi|>h\},
\end{cases}
\end{equation}
we obtain
\[
\int_M|\nabla_g(\phi\phi_h^k)|^2\;dv_g = \int_{\{|\phi|>h\}} \phi_h^{2k}|\nabla_g\phi|^2\; dv_g + (k+1)^2\int_{\{|\phi|\le h\}} \phi_h^{2k}|\nabla_g\phi|^2\;dv_g
\]
Thus, 
\begin{equation}\label{SEigenReg6}
A_2 \ge \frac{2k+1}{(k+1)^2}\int_M|\nabla_g(\phi\phi_h^k)|^2\;dv_g.
\end{equation}
Putting together (\ref{SEigenReg5}) and (\ref{SEigenReg6}) gives 
\begin{equation}
\frac{2k+1}{(k+1)^2} \int_M|\nabla_g(\phi\phi_h^k)|^2\;dv_g \le C(g)\int_\Sigma (\phi\phi_h^k)^2\;d\sigma_g,
\end{equation}
that is,
\begin{equation}\label{SEigenReg7}
\frac{2k+1}{(k+1)^2} \|\phi\phi_h^k\|^2_{W^{1,2}(M)} \le C(g)\int_\Sigma (\phi\phi_h^k)^2\;d\sigma_g + \frac{2k+1}{(k+1)^2}\int_M (\phi\phi_h^k)^2\;dv_g.
\end{equation}

\vspace{.1in}

\noindent{\bf Interior Estimates: $\phi \in L^p(M),  1\le p\le \infty$.} We focus on the right-hand side of (\ref{SEigenReg7}). By a result of Winkert \cite{WinkertBoundedness} (see also Proposition 2.1 in \cite{Marino}), there exist constants $c_1>0$ and $c_2>0$ such that, for any $\epsilon>0$\footnote{We point out that the constants $c_1$ and $c_2$ from Proposition 2.1 in \cite{Marino}, which traces back to \cite{WinkertBoundedness}, are independent of $\epsilon$. This is used here.},  
\begin{equation}
\|\phi\phi_h^k\|^2_{L^2(\Sigma)} \le \epsilon \|\phi\phi_h^k\|^2_{W^{1,2}(M)} + c_1\epsilon^{-c_2}\|\phi\phi_h^k\|^2_{L^2(M)}.
\end{equation}
Taking $\epsilon := C(g)^{-1}(2k+1)(k+1)^{-2}2^{-1}$, from (\ref{SEigenReg7}) we deduce
\begin{equation}
\|\phi\phi_h^k\|_{W^{1,2}(M)}^2\le C(g,k)\|\phi\phi_h^k\|^2_{L^2(M)},
\end{equation}
where 
\begin{equation}
C(g,k) = 2 + C(g)^{c_2+1}c_1\left(\frac{2(k+1)^2}{2k+1}\right)^{c_2+1}.
\end{equation}
By enlarging $C(g,k)$ if necessary, we can assume without loss of generality that $C(g,k) \le C(g)(1+k^{1+c_2})$, for a possibly different constant $C(g)$ independent of $k$. 

We use $2^*$ to denote $\frac{2n}{n-2}$. An application of the Sobolev embedding theorem implies
\begin{equation}\label{SEigenReg8}
\begin{split}
\|\phi\phi_h^k\|_{L^{2^*}(M)} &\le C\|\phi\phi_h^k\|_{W^{1,2}(M)} \le C\cdot C(g)^{\frac{1}{2}}(1+k^{1+c_2})^{\frac{1}{2}}\|\phi\phi_h^k\|_{L^2(M)} \\ &\le C\cdot C(g)^{\frac{1}{2}}(1+k^{1+c_2})^{\frac{1}{2}}\|\phi\|_{L^{2(k+1)}(M)}^{k+1}.
\end{split}
\end{equation}
Letting $h\to \infty$ and using Fatou's Lemma gives
\begin{equation}
\|\phi\|_{L^{p\frac{n}{n-2}}(M)} \le C(g)^{\frac{1}{p}} (1+k^{1+c_2})^{\frac{1}{p}} \|\phi\|_{L^{p}(M)},
\end{equation}
where $p = 2(k+1)$. Select $k_0>0$ such that $2(k_0+1) = 2^*$, and pick $k_j$, $j\in \mathbb{N}$, such that $2(k_j+1) = \left(\frac{n}{n-2}\right)^j2^*$. A standard iteration scheme yields
\begin{equation}\label{SEigenReg9}
\|\phi\|_{L^{p_j\frac{n}{n-2}}(M)}\le C \|\phi\|_{L^{2^*}(M)},
\end{equation}
where $C = C(n,g,c_2)$. Since $p_j\to \infty$, this also gives the $L^\infty(M)$ estimates. 

\vspace{.1in}
\noindent{\bf Boundary Estimates: $\phi \in L^p(\Sigma), 1\le p\le\infty$.}
It is enough to show that $\phi\in L^\infty(\Sigma)$. Since $\phi\in W^{1,2}(M)\cap L^\infty(M)$ thanks to the previous discussion, this case is a consequence of Proposition 2.4 in \cite{Marino}.
\end{proof}

\begin{remark}\label{GenEigenBounds}
Notice that Proposition \ref{Estimates} remains true if $\phi$ is any generalized eigenfunction associated to a negative eigenvalue. That is, Proposition \ref{Estimates} holds for any weak $W^{1,2}(M)$ solution of 
\begin{equation}
\begin{cases}
L_g(\phi) &= 0 \;(\text{in }M)\\
B_g(\phi) &= \lambda_k(u)\phi u^{N-2} \; (\text{on }\Sigma)
\end{cases}
\end{equation}
with $\lambda_k(u)<0$.
\end{remark}

\begin{remark}\label{RMoserBound}
Suppose $\{\phi_i\}_{i=1}^\infty$ is a sequence of weak solutions to (\ref{SEigenReg1}) which is uniformly bounded in $W^{1,2}(M)$. By the Sobolev embedding theorem, the sequence is also uniformly bounded in $L^{2^*}(M)$. From (\ref{SEigenReg9}) we then conclude that the sequence is uniformly bounded in any $L^p(M)$. Furthermore, after taking limits in (\ref{SEigenReg9}), we can also conclude that the sequence is uniformly bounded in $L^\infty(M)$. Finally, an application of Proposition 2.4 in \cite{Marino} yields uniform bounds in $L^\infty(\Sigma)$.
\end{remark}

\begin{remark}
The ideas for the proof of Proposition \ref{Estimates} follow closely the approach of Marino and Winkert in \cite{Marino} (see Theorem 3.1). However, their result does not apply immediately since our weight function $u$ is not a priori in $L^\infty(\Sigma)$. 
\end{remark}

\section{First Eigenvalue: Proof of Theorem \ref{TFirstEigenvalue}}\label{FirstEigenvalue}

Assume that we are given a smooth conformal metric $g_u= u^{\frac{4}{n-2}}g\in[g]$ ($u\in C^\infty(\overline M)$, $u>0$ on $\overline M$), and that $[g]$ is a conformal class for which inequality (\ref{ExistenceCond}) holds. Thanks to Escobar's work \cite{Escobar2}, a minimizer exists and the associated metric is scalar flat and has constant mean curvature on the boundary. Let us assume that our background metric $g$ is such metric, and that it has been normalized to have unit boundary volume. Therefore, $Q(\overline M,\Sigma, [g]) = 2c_nh_{g}$. Denote by $E_g(\phi)$ the numerator in the Rayleigh Quotient $\R_g^u$:
\begin{equation}
\begin{split}
E_g(\phi) &:= \int_M (|\nabla_g\phi|^2 +c_nR_g\phi^2)\;dv_g + 2c_n\int_\Sigma h_g\phi^2\;d\sigma_g \\ &= \int_M |\nabla_g\phi|^2 \;dv_g + 2c_nh_g\int_\Sigma \phi^2\;d\sigma_g.
\end{split}
\end{equation}
The sign of $h_g$ is determined by the sign of $Q(\overline{M},\Sigma,[g])$. 

 \subsection{The $Q(\overline{M},\Sigma,[g]) < 0$ case} We start by pointing out that, in this case, we have uniqueness of extremals for $Q(\overline M, \Sigma, [g])$, and so a scalar flat metric with constant mean curvature on the boundary is automatically a minimizer. This follows from a maximum principle argument; see Proposition 2.6 in \cite{Hamanaka}.
 
 Recall that $\lambda_1(\D_{g}) < 0$ as its sign agrees with that of $Q(\overline{M},\Sigma,[g])$. The variational characterization (\ref{VarCar2}) implies that we can find a function $\phi\in C^\infty(\overline M)$ such that $E_g(\phi)<0$ and $\phi\not\equiv0$ on $\Sigma$. For these functions, Hölder's inequality implies
\begin{equation}\label{FirstEigenvalue1}
\R_g^u(\phi) = \frac{E_g(\phi)}{\int_\Sigma \phi^2u^{N-2}\;d\sigma_g} \le \frac{E_g(\phi)}{\left(\int_\Sigma \phi^{N}\;d\sigma_g\right)^{\frac{2}{N}}\left(\int_\Sigma u^{N}\;d\sigma_g\right)^{\frac{N-2}{N}}}.
\end{equation}
Therefore,
\begin{equation}\label{FirstEigenvalueQ}
\lambda_1(u)\text{Vol}(\Sigma,d\sigma_{g_u})^{\frac{N-2}{N}} \le Q(\overline{M},\Sigma,[g]) 
\end{equation}
follows after taking the infimum over all those $\phi$'s. Notice that equality occurs for the metric $g$, i.e., $u\equiv 1$ maximizes the normalized first eigenvalue functional. Indeed, if we take $\phi\equiv 1>0$, then $B_{g}(1) = 2c_nh_{g}$, implying that $\lambda_1(1) = 2c_n h_{g} = Q(\overline M,\Sigma, [g])$.

We claim that equality in (\ref{FirstEigenvalueQ}) occurs if and only if, up to a conformal change of metric in the interior, $u$ is a positive constant on $\Sigma$. This means that $g_u|_{T\Sigma}$ is homothetic to $g|_{T\Sigma}$. First, let us assume that such $u$ is given. By scale invariance, we can assume that $\text{Vol}(\Sigma,d\sigma_{g_u}) = 1$, and so $u\equiv 1$ on $\Sigma$. Then $B_{g_u}(u^{-1}) = u^{1-N}B_g(1) = 2c_nh_g$, thus $u^{-1}$ is a first eigenfunction with $\lambda_1(u) = 2c_nh_g = Q(\overline{M},\Sigma,[g])$. 

On the other hand, assume that equality in (\ref{FirstEigenvalueQ}) occurs for some conformal metric $g_u=u^{\frac{4}{n-2}}g\in[g]$. By scale invariance, we can assume that $\text{Vol}(\Sigma,d\sigma_{g_u}) = 1$, thus $\lambda_1(u)= Q(\overline M, \Sigma, [g]) = 2c_nh_g$.  Moreover, using (\ref{FirstEigenvalue1}), we get
\begin{equation}
\lambda_1(u)\le \R_g^u(1) = \frac{E_g(1)}{\int_\Sigma u^{N-2}\;d\sigma_g}\le \frac{E_g(1)}{\left(\int_\Sigma u^N\;d\sigma_g\right)^{\frac{N-2}{N}}} = 2c_nh_g= \lambda_1(u).
\end{equation}
The equality case in H\"older's inequality then implies that $u$ is constant on $\Sigma$. This finishes the proof. 

As a concluding remark, we point out that (\ref{FirstEigenvalue1}) shows that if $Q(\overline{M},\Sigma,[g]) = -\infty$, then so is $\lambda_1(u)\text{Vol}(\Sigma,d\sigma_{g_u})^{\frac{N-2}{N}}$ for any given smooth positive $u$.

\subsection{The $Q(\overline{M},\Sigma,[g])>0$ case} In this case, we have that $\lambda_1(\D_{g})>0$, and we assume the background metric $g$ is a unit boundary volume minimizer of $Q(\overline{M},\Sigma,[g])$. Since $E_g(\phi) >0$ for all $\phi\in C^{\infty}(\overline{M})$ with $\phi \not \equiv 0$ on $\Sigma$, Hölder's inequality yields
\begin{equation}\label{FirstEigenvalue2}
\R_g^u(\phi) = \frac{E_g(\phi)}{\int_\Sigma \phi^2u^{N-2}\;d\sigma_g} \ge \frac{E_g(\phi)}{\left(\int_\Sigma \phi^{N}\;d\sigma_g\right)^{\frac{2}{N}}\left(\int_\Sigma u^{N}\;d\sigma_g\right)^{\frac{N-2}{N}}}.
\end{equation}
and a similar analysis leads to
\begin{equation}\label{FirstEigenvalueQ1}
\lambda_1(\D_{g_u})\text{Vol}(\Sigma,d\sigma_{g_u})^{\frac{N-2}{N}}\ge Q(\overline{M},\Sigma,[g]). 
\end{equation}
As before, equality occurs if we take $u\equiv 1$. 

Assume equality holds in (\ref{FirstEigenvalueQ1}) for some conformal metric $g_u$, and assume it has unit boundary volume. If $\phi_1>0$ is a first eigenfunction associated to $\lambda_1(u) = 2c_nh_g$, then
\begin{equation}
\begin{split}
\lambda_1(u) &= \frac{E_{g}(\phi_1)}{\int_\Sigma \phi_1^2u^{N-2}\;d\sigma_g} = Q(\overline M, \Sigma, [g]) \\ &\le \frac{E_{g}(\phi_1)}{\left(\int_\Sigma \phi_1^N\;d\sigma_g\right)^{\frac{2}{N}}} \le \frac{E_{g}(\phi_1)}{\int_\Sigma \phi_1^2u^{N-2}\;d\sigma_g} = \lambda_1(u).
\end{split}
\end{equation} 
This implies that $\phi_1>0$ minimizes $Q(\overline M, \Sigma, [g])$, and so $g_{\phi_1}$ is a scalar flat metric with constant mean curvature, and it implies that $u = C\phi_1$ on $\Sigma$, for some $C>0$. Conversely, if $\phi_1$ is a positive first eigenfunction for $g_u$, $g_{\phi_1}$ minimizes $Q(\overline M, \Sigma, [g])$ and $u = C\phi_1$ on $\Sigma$ for some $C>0$, then 
\begin{equation}
\begin{split}
\lambda_1(u) &= \frac{E_g(\phi_1)}{\int_\Sigma \phi_1^2u^{N-2}\;d\sigma_g} = C^{-2}E_g(\phi_1)\left(\int_\Sigma u^N\;d\sigma_g\right)^{-1} \\ &= Q(\overline M,\Sigma, [g])\text{Vol}(\Sigma, d\sigma_{g_u})^{\frac{2-N}{N}}.
\end{split}
\end{equation}
This finishes the proof.

\subsection{The $Q(\overline M,\Sigma, [g]) = 0$ case.}
In this case, our background metric $g$ satisfies both $R_g = 0$ and $h_g = 0$. If $g_u\in [g]$, then the conformal properties imply that $B_{g_u} (u^{-1})= u^{1-N}B_g(1) = 0$ and so $\lambda_1(u) = 0$. Since $g_u\in [g]$ is arbitrary, the result follows. 
\section{First Variation Formulas}\label{FirstVariationFormulas}

One of the main issues in extremal eigenvalue problems arises from the non-differentiability of eigenvalue functionals of the form $g\in \mathcal{A}\longmapsto \lambda(g)$, where $\mathcal{A}$ is a subset of the space of Riemannian metrics on the manifold. However, the one-sided derivatives exist along certain deformations, and Euler--Lagrange equations can be obtained via classical Hahn--Banach separation theorems; see for instance \cite{Soufi,Soufi2} for discussion involving Laplacian eigenvalues. 

The main purpose of this section is to introduce a suitable family of deformations, and to derive first variation formulas. We consider deformations of the form $u_t:= u(1+ th)$, where $h\in L^\infty(\Sigma)$. In light of Lemma \ref{TestFunctions}, we require $t\in (-\delta,\delta)$, where $\delta >0$ is small enough such that $|1+th| \ge 1 - \delta\|h\|_{L^\infty(\Sigma,d\sigma)}>0$. With this, $Gr_k^u(W^{1,2}(M))=Gr_k^{u_t}(W^{1,2}(M))$ for all $t\in (-\delta,\delta)$ since the positive sets of $u$ and $u_t$ agree. Under these assumptions, the function $t\longmapsto \lambda_k(u_t)$ is continuous at $t=0$ for $k=1,2$. 

\begin{proposition}\label{Continuity}
Let $u\in L^N_{> 0}(\Sigma)$. Then the following holds:
\begin{equation}
\lim_{t\to0}\lambda_1(u_t) = \lambda_1(u),
\end{equation}
and 
\begin{equation}
\lim_{t\to 0}\lambda_2(u_t) = \lambda_2(u).
\end{equation}
\end{proposition}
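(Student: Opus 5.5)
The plan is to exploit the fact that, for $|t|<\delta$, the perturbed weight is comparable to the original one pointwise. Since $u_t = u(1+th)$ with $1+th \ge 1-\delta\|h\|_{L^\infty(\Sigma)}>0$, we have
\begin{equation*}
(1-|t|\,\|h\|_{L^\infty(\Sigma)})^{N-2}\,u^{N-2} \le u_t^{N-2} \le (1+|t|\,\|h\|_{L^\infty(\Sigma)})^{N-2}\,u^{N-2}
\end{equation*}
a.e.\ on $\Sigma$. Set $a_t:=(1-|t|\|h\|_\infty)^{N-2}$ and $b_t:=(1+|t|\|h\|_\infty)^{N-2}$; both tend to $1$ as $t\to 0$. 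Since $u_t$ and $u$ have the same positivity set, $Gr_k^{u_t}(W^{1,2}(M))=Gr_k^u(W^{1,2}(M))$, and the numerators of $\R_g^{u_t}$ and $\R_g^u$ coincide (only the denominator depends on the conformal factor). Hence for every admissible $\phi$ the two Rayleigh quotients differ only through the weighted $L^2$ denominators, which are comparable with constants $a_t,b_t$.

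The key step is a sign-sensitive comparison of the quotients. For each fixed test function $\phi$:
\begin{itemize}
\item if $E_g(\phi)<0$, then $b_t^{-1}\R_g^u(\phi)\le \R_g^{u_t}(\phi)\le a_t^{-1}\R_g^u(\phi)$;
\item if $E_g(\phi)\ge0$, the inequalities are reversed.
\end{itemize}
For $k=1,2$ we know $\lambda_k(u)<0$ by Proposition \ref{NegativeEigen}, since $N([g])\ge 2$ and $u\in L^N_{>0}(\Sigma)$. Also $u_t\in L^N_{>0}(\Sigma)$, so $\lambda_k(u_t)<0$ as well. Consequently, in the min-max (\ref{GeneralizedEigen2}) we may restrict attention to subspaces $S_k$ on which $\sup\R^u_g<0$ (respectively $\sup\R^{u_t}_g<0$). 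On such a subspace every nonzero element has negative numerator, so the first pair of inequalities holds uniformly over $S_k$.

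Taking the supremum over $\phi\in S_k$ and then the infimum over these common subspaces gives
\begin{equation*}
b_t^{-1}\lambda_k(u)\le\lambda_k(u_t)\le a_t^{-1}\lambda_k(u).
\end{equation*}
One must be careful to apply each direction with the subspaces selected by the correct functional, e.g.\ near-optimal subspaces for $\lambda_k(u)$ give the upper bound on $\lambda_k(u_t)$, and vice versa. Finiteness of $\lambda_k(u)$ (Proposition \ref{finite}) then lets us let $t\to0$, and both bounds converge to $\lambda_k(u)$.

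The main obstacle I expect is the sign bookkeeping. One must check that restricting to negative-valued subspaces does not change either infimum. This holds because $\lambda_k<0$ guarantees near-optimal subspaces with negative supremum, and on those subspaces the comparison direction is uniform. An alternative, if this feels delicate, is to work with eigenfunctions directly: use $f_1$ and $\phi_2$ from Proposition \ref{ExistenceSimplicity} together with Lemma \ref{Char2}. The bounds $W^{1,2}$ (Proposition \ref{finite}) and $L^\infty$ (Proposition \ref{Estimates}) would then control the perturbation $\int_\Sigma \phi^2(u_t^{N-2}-u^{N-2})$ at rate $O(|t|)$. I would attempt the comparison argument first, as it requires no regularity.
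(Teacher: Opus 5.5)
Your argument is the paper's own. It uses the same pointwise comparison $a_t u^{N-2}\le u_t^{N-2}\le b_t u^{N-2}$, Proposition \ref{NegativeEigen} (equal positivity sets, hence equal Grassmannians and $\lambda_k(u),\lambda_k(u_t)<0$), and the min--max characterization, with your restriction to subspaces on which $E_g<0$ spelling out the sign bookkeeping the paper leaves implicit.

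One correction: for $E_g(\phi)<0$ the inequalities go the other way. A larger denominator brings a negative quotient closer to $0$, so $a_t^{-1}\R_g^u(\phi)\le \R_g^{u_t}(\phi)\le b_t^{-1}\R_g^u(\phi)$, and the conclusion is $a_t^{-1}\lambda_k(u)\le\lambda_k(u_t)\le b_t^{-1}\lambda_k(u)$. As you wrote it, the chain is impossible for $t\neq 0$ because $\lambda_k(u)<0$; with the directions fixed, the limit $t\to 0$ goes through unchanged.
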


\begin{proof}
The proof follows from Proposition \ref{NegativeEigen} (the zero sets of $u$ and $u_t$ coincide), the inequality
\begin{equation}
\begin{split}
(1-|t|\|h\|_\infty)^{N-2}\int_\Sigma \phi^2u^{N-2}\;d\sigma_g & \le \int_\Sigma \phi^2 u^{N-2}_t\;d\sigma_g\\ &\le (1+|t|\|h\|_\infty)^{N-2}\int_\Sigma u^{N-2}\phi^2\;d\sigma_g,
\end{split}
\end{equation}
and the variational characterization of generalized eigenvalues.
\end{proof}

For any generating function $h\in L^\infty(\Sigma)$, we define  $L_h(\phi, u)$ for a function $\phi$ in $E_2(u)$ by 
\begin{equation}
L_h(\phi, u) = -(N-2)\lambda_2(u)\frac{\int_\Sigma h\phi^2u^{N-2}\;d\sigma_g}{\int_\Sigma \phi^2u^{N-2}\;d\sigma_g}.
\end{equation}
One of the key ingredients in our argument is Proposition \ref{OSD} below. Its proof is omitted because the arguments are quite similar to those of the closed case discussed in \cite{Gursky}, Section 3 (see also \cite{Kokarev}).

\begin{proposition}\label{OSD}
The one-sided derivatives of the function $t\mapsto \lambda_2(u_t)$ exist, and they are given by
\begin{equation}
\frac{d}{dt}\lambda_2(u_t)_{|_{t=0^+}} = \inf_{\phi\in E_2(u)\setminus\{0\}} L_h(\phi,u),
\end{equation}
\begin{equation}
\frac{d}{dt}\lambda_2(u_t)_{|_{t=0^-}} = \sup_{\phi\in E_2(u)\setminus\{0\}} L_h(\phi,u).
\end{equation}
\end{proposition}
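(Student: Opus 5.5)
We prove the formula for the right derivative; the left derivative follows by replacing $h$ with $-h$, since $u_{-t}=u(1+t(-h))$ and $L_{-h}=-L_h$. Throughout we use the characterization of Lemma \ref{Char2} and the fact that $\lambda_2(u)<0$, so that $E_2(u)$ is finite dimensional by Corollary \ref{E2-FiniteDim}. We also use the expansion
\[
u_t^{N-2}=u^{N-2}\bigl(1+(N-2)th+O(t^2)\bigr),
\]
which holds uniformly on $\Sigma$ because $h\in L^\infty(\Sigma)$. All Grassmannians agree for $|t|<\delta$.

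\emph{Upper bound.} Fix $\phi\in E_2(u)\setminus\{0\}$ and test $\lambda_2(u_t)$ with the two-dimensional space $S=\mathrm{span}\{f_1,\phi\}$, where $f_1$ is the first eigenfunction for $u$. On $S$ the numerator $E_g$ is a fixed quadratic form. Because $\phi$ is $u^{N-2}$-orthogonal to $f_1$, the $u$-denominator is block diagonal, and on this basis $E_g$ has the entries $\lambda_1(u),\lambda_2(u)$. The largest generalized eigenvalue of $E_g$ relative to the $u_t$-denominator is then a perturbation of the simple top eigenvalue $\lambda_2(u)>\lambda_1(u)$ of a $2\times2$ pencil. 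Its derivative at $t=0$ is $-\lambda_2(u)$ times the derivative of the denominator at $\phi$, normalized; this is exactly $L_h(\phi,u)$. Here we use that $\lambda_1(u)<\lambda_2(u)$, which holds because $E_1(u)$ is one-dimensional. Hence
\[
\limsup_{t\to0^+}\frac{\lambda_2(u_t)-\lambda_2(u)}{t}\le \inf_{\phi} L_h(\phi,u).
\]
If $\lambda_1(u)=\lambda_2(u)$ could occur this argument would need care, but simplicity rules that case out.

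\emph{Lower bound.} Take $t_j\to0^+$ realizing the $\liminf$ of the difference quotient. Let $\phi_j$ be second eigenfunctions for $u_{t_j}$, normalized by $\int\phi_j^2u_{t_j}^{N-2}=1$, and let $f_{1,j}$ be the corresponding first eigenfunctions. These exist by Proposition \ref{ExistenceSimplicity} and are bounded in $W^{1,2}(M)$ by Proposition \ref{finite}, using $\lambda_2(u_{t_j})<0$ and the continuity of Proposition \ref{Continuity}.

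Pass to a weak limit $\phi_j\rightharpoonup\phi$. The truncation argument from the proof of Proposition \ref{ExistenceSimplicity}, together with the uniform comparability of $u_t^{N-2}$ and $u^{N-2}$, shows that the weighted norms and weighted inner products converge. So $\phi$ is normalized. The weak equations converge, so $\phi$ solves the eigenvalue equation with eigenvalue $\lambda_2(u)$. Finally, $f_{1,j}\to f_1$ by simplicity, so $\phi$ is orthogonal to $f_1$, and therefore $\phi\in E_2(u)\setminus\{0\}$.

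Now test $\lambda_2(u)$ with $\mathrm{span}\{f_{1,j},\phi_j\}$ and apply the same $2\times2$ pencil computation in reverse, with $t_j$ replaced by $-t_j$. This gives
\[
\lambda_2(u)\le\lambda_2(u_{t_j})-t_jL_h(\phi_j,u)+o(t_j).
\]
Dividing by $t_j$ and passing to the limit, using $L_h(\phi_j,u)\to L_h(\phi,u)$ from the weighted convergence, we obtain
\[
\liminf\ge L_h(\phi,u)\ge\inf_{\phi'} L_h(\phi',u).
\]

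The main obstacle is controlling the error terms uniformly. The $O(t^2)$ term in the denominator must be bounded by $t^2$ times the weighted norm, which holds since $h$ is bounded. The off-diagonal cross terms, involving $\int f_{1,j}\phi_j u^{N-2}h$, contribute only at second order because the top eigenvalue of the pencil is simple with a uniform gap $\lambda_2-\lambda_1>0$; this gap persists for small $t$ by continuity.
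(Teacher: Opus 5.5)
Your proposal is correct and is essentially the standard argument the paper defers to (Section~3 of \cite{Gursky}, cf.\ \cite{Kokarev}). It has three ingredients: the min--max upper bound from testing $\lambda_2(u_t)$ on $\mathrm{span}\{f_1,\phi\}$ with $\phi\in E_2(u)$; the lower bound from compactness of normalized second eigenfunctions of $u_{t_j}$, whose limits lie in $E_2(u)$; and the left derivative via $h\mapsto -h$.

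As an optional simplification of your lower bound, test the weak equation of $\phi_j$ against its limit $\phi$ and vice versa. This gives the exact identity $(\lambda_2(u_{t_j})-\lambda_2(u))\int_\Sigma \phi_j\phi\, u^{N-2}\,d\sigma_g=-\lambda_2(u_{t_j})\int_\Sigma \phi_j\phi\,(u_{t_j}^{N-2}-u^{N-2})\,d\sigma_g$, which yields the subsequential limit $L_h(\phi,u)$ directly. It also removes the need for the reverse $2\times 2$ pencil and for the uniform gap $\lambda_2(u_t)-\lambda_1(u_t)$.
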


\section{Regularized Functional $F_\epsilon$}\label{eRegularization}

 For each $\epsilon>0$, we set
\begin{equation}
\D_\epsilon := \{u\in L^N_{\ge 0}: u^{-\epsilon}\in L^1(\Sigma)\}\subseteq L^N_{>0}(\Sigma),
\end{equation}
and define the scale-invariant functional $F_{2,\epsilon}: \D_\epsilon \to \mathbb{R}$ by
\begin{equation}
F_{2,\epsilon}(u) = \lambda_2(u)\left(\int_\Sigma u^N\;d\sigma_g\right)^{\frac{N-2}{N}} - \left(\int_\Sigma u^{-\epsilon}\;d\sigma_g\right)\left(\int_\Sigma u^N\;d\sigma_g\right)^{\frac{\epsilon}{N}}.
\end{equation}
Note that, by Proposition \ref{finite}, $F_{2,\epsilon}$ is well-defined on all of $\D_\epsilon$. Moreover, $F_{2,\epsilon}(u)\le 0$ for all $u\in \D_\epsilon$. As a straightforward consequence of Proposition \ref{OSD}, we obtain:

\begin{proposition}\label{FirstVarFor}
Suppose $u\in\D_\epsilon$ with
\begin{equation}\label{unitnorm}
\int_\Sigma u^N\;d\sigma_g = 1,
\end{equation}
and for any $h\in L^\infty(\Sigma)$ consider 
\begin{equation}\label{deformation}
u_t = u(1+th)
\end{equation}
for $t\in\mathbb{R}$ small. Then the one-sided derivatives of the function $t\longmapsto F_{2,\epsilon}(u_t)$ at $t=0$ exist, and are given by 
\begin{equation}
\begin{split}
\frac{d}{dt}F_{2,\epsilon}(u_t)|_{t=0^+} & = (N-2)\lambda_2(u)\Bigg{\{}\gamma_{1,\epsilon}\int_\Sigma hu^N\;d\sigma_g\\ &\hspace{.2in}- \inf_{\phi\in(E_2(u))_1}\int_\Sigma h\phi^2u^{N-2}\;d\sigma_g - \gamma_{2,\epsilon}\int_\Sigma hu^{-\epsilon}\;d\sigma_g\Bigg{\}}
\end{split}
\end{equation}
and
\begin{equation}
\begin{split}
\frac{d}{dt}F_{2,\epsilon}(u_t)|_{t=0^-} & = (N-2)\lambda_2(u)\Bigg{\{}\gamma_{1,\epsilon}\int_\Sigma hu^N\;d\sigma_g\\&\hspace{.2in} - \sup_{\phi\in(E_2(u))_1}\int_\Sigma h\phi^2u^{N-2}\;d\sigma_g - \gamma_{2,\epsilon}\int_\Sigma hu^{-\epsilon}\;d\sigma_g\Bigg{\}}
\end{split}
\end{equation}
Here 
\begin{equation}\label{gamma2}
\gamma_{1,\epsilon}:= 1-(N-2)^{-1}\frac{\epsilon}{\lambda_2(u)}\int_\Sigma u^{-\epsilon}\;d\sigma_g >1,
\end{equation}
\begin{equation}\label{gamma2}
\gamma_{2,\epsilon}:= (N-2)^{-1}\frac{\epsilon}{|\lambda_2(u)|} >0,
\end{equation}
and $(E_2(u))_1$ is the space of second generalized eigenfunctions with respect to $u$ satisfying $\int_\Sigma \phi^2 u^{N-2}\;d\sigma_g = 1$.
\end{proposition}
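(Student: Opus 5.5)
The plan is to write $F_{2,\epsilon}(u_t)$ as a combination of one scalar function with only one-sided derivatives, namely $t\mapsto\lambda_2(u_t)$, and several genuinely differentiable functions. The one-sided product rule then gives the result, and what remains is bookkeeping of signs. Set
\[
V(t):=\int_\Sigma u_t^N\,d\sigma_g=\int_\Sigma u^N(1+th)^N\,d\sigma_g,\qquad I(t):=\int_\Sigma u_t^{-\epsilon}\,d\sigma_g=\int_\Sigma u^{-\epsilon}(1+th)^{-\epsilon}\,d\sigma_g,
\]
so that
\[
F_{2,\epsilon}(u_t)=\lambda_2(u_t)\,V(t)^{\frac{N-2}{N}}-I(t)\,V(t)^{\frac{\epsilon}{N}} .
\]
For $|t|<\delta$ we have $1+th\ge 1-\delta\|h\|_\infty>0$, so $u_t\in\D_\epsilon$ and $u_t$ has the same zero set as $u$. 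Hence Proposition \ref{NegativeEigen} applies, and $\lambda_2(u_t)<0$ is finite by Proposition \ref{finite}.

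\textbf{Step 1: differentiability of $V$ and $I$.} The integrands $u^N(1+th)^N$ and $u^{-\epsilon}(1+th)^{-\epsilon}$ are smooth in $t$. Their $t$-derivatives are dominated, uniformly for $|t|<\delta$, by $C\|h\|_\infty u^N$ and $C\|h\|_\infty u^{-\epsilon}$ respectively. Both dominating functions are in $L^1(\Sigma)$: the first because $u\in L^N(\Sigma)$, the second by the definition of $\D_\epsilon$. Dominated convergence then gives that $V$ and $I$ are $C^1$ near $0$, with
\[
V(0)=1,\quad V'(0)=N\int_\Sigma hu^N\,d\sigma_g,\qquad I'(0)=-\epsilon\int_\Sigma hu^{-\epsilon}\,d\sigma_g,
\]
using the normalization (\ref{unitnorm}).

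\textbf{Step 2: one-sided product rule.} By Proposition \ref{OSD}, $t\mapsto\lambda_2(u_t)$ has one-sided derivatives at $0$, and by Proposition \ref{Continuity} it is continuous there. The product of such a function with a $C^1$ function has one-sided derivatives, given by the usual product rule; this follows from writing out the difference quotient. Thus
\[
\frac{d}{dt}F_{2,\epsilon}(u_t)\Big|_{0^\pm}=\frac{d}{dt}\lambda_2(u_t)\Big|_{0^\pm}+(N-2)\lambda_2(u)\int_\Sigma hu^N+\epsilon\int_\Sigma hu^{-\epsilon}-\epsilon I(0)\int_\Sigma hu^N .
\]

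\textbf{Step 3: algebra and signs.} Restricting to $\phi\in(E_2(u))_1$, the derivative of $\lambda_2$ is
\[
\inf L_h=-(N-2)\lambda_2(u)\,\inf_{\phi}\int_\Sigma h\phi^2u^{N-2}\,d\sigma_g ,
\]
where the infimum passes through the positive constant $-(N-2)\lambda_2(u)$ (recall $\lambda_2(u)<0$). Equivalently this is $(N-2)\lambda_2(u)\cdot\bigl(-\inf_\phi\int h\phi^2u^{N-2}\bigr)$, and the $0^-$ case gives the analogous expression with $\sup$. Factoring out $(N-2)\lambda_2(u)$ produces:
\begin{itemize}
\item the coefficient $1-\epsilon I(0)/((N-2)\lambda_2(u))=\gamma_{1,\epsilon}$ in front of $\int hu^N$;
\item the coefficient $\epsilon/((N-2)\lambda_2(u))=-\gamma_{2,\epsilon}$ in front of $\int hu^{-\epsilon}$.
\end{itemize}
Finally, $\gamma_{1,\epsilon}>1$ and $\gamma_{2,\epsilon}>0$ follow from $\lambda_2(u)<0$ and $I(0)>0$.

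The only step with any subtlety is justifying the product rule for a factor that is merely one-sided differentiable. Continuity from Proposition \ref{Continuity} handles the cross term. Apart from that, the main thing to watch is the inf/sup swap caused by the negative sign of $\lambda_2(u)$.
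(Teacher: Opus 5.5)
Your proposal is correct and follows the route the paper indicates. The paper states this result as a direct consequence of Proposition \ref{OSD}, namely the one-sided derivatives of $t\mapsto\lambda_2(u_t)$ combined with the ordinary product rule for the differentiable factors $\int_\Sigma u_t^N\,d\sigma_g$ and $\int_\Sigma u_t^{-\epsilon}\,d\sigma_g$. Your computation of $\gamma_{1,\epsilon}$ and $\gamma_{2,\epsilon}$, and your handling of the positive factor $-(N-2)\lambda_2(u)$ in the $\inf$ and $\sup$, check out.
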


\begin{definition}\label{DefExtremal}
We say that $u\in \D_\epsilon$ is \textit{extremal} for the normalized functional $F_{2,\epsilon}$ if the one-sided derivatives of $t\longmapsto F_{2,\epsilon}(u_t)= F_{2,\epsilon}(u(1+th))$ at $t=0$ satisfy
\begin{equation}
\frac{d}{dt}F_{2,\epsilon}(u_t)_{t=0^+}\times \frac{d}{dt}F_{2,\epsilon}(u_t)_{t=0^-} \le 0
\end{equation}
for all generating functions $h\in L^\infty(\Sigma)$. If there is a $u_\epsilon\in \D_\epsilon$ such that 
\begin{equation}
F_{2,\epsilon}(u_\epsilon)\ge F_{2,\epsilon}(u)
\end{equation}
for all $u\in \D_{\epsilon}$, then we say $u_\epsilon$ is \textit{maximal} for $F_{2,\epsilon}$. 
\end{definition}

Notice that if a function $u\in \D_\epsilon$ is maximal, then 
\begin{equation}
\frac{d}{dt}F_{2,\epsilon}(u_t)_{t=0^+}\le 0 \le \frac{d}{dt}F_{2,\epsilon}(u_t)_{t=0^-}
\end{equation}
for all $h\in L^\infty(\Sigma)$, thus any maximal conformal factor is extremal. Also, our definition of extremal coincides with the one given by El Soufi-Ilias in \cite{Soufi}, which is equivalent to Nadirashvili's one in \cite{Nadirashvili}. 

In the next proposition, we show the existence of a maximizer $u_\epsilon \in \D_\epsilon$ for $F_{2,\epsilon}$, for each $\epsilon>0$.

\begin{proposition}\label{existence}
For each $\epsilon>0$, there is a $u_\epsilon\in\D_\epsilon$, normalized as in (\ref{unitnorm}), that is maximal for $F_{2,\epsilon}$. Moreover, there is a constant $C=C(g)$ independent of $\epsilon>0$ such that 
\begin{equation}\label{existence1}
\int_\Sigma u_\epsilon^{-\epsilon}\;d\sigma_g\le C.
\end{equation}
\end{proposition}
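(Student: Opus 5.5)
The plan is the direct method applied to a maximizing sequence, with the penalty term supplying control from below on $u$. Throughout, write $\Lambda_\epsilon:=\sup_{\D_\epsilon}F_{2,\epsilon}$. Since $F_{2,\epsilon}\le 0$, we have $\Lambda_\epsilon\le 0$. For a uniform lower bound, test with $u\equiv 1$ (recall $\text{Vol}(\Sigma,d\sigma_g)=1$): this gives $\Lambda_\epsilon\ge \lambda_2(1)-1=:-C_0$, with $C_0$ depending only on $g$. Take a maximizing sequence $u_j\in\D_\epsilon$ normalized by (\ref{unitnorm}) with $F_{2,\epsilon}(u_j)\ge -C_0-1$. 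Because $\lambda_2(u_j)<0$ (Proposition \ref{NegativeEigen}), the two terms of $F_{2,\epsilon}$ are both nonpositive, so each is bounded below. This yields
\[
\int_\Sigma u_j^{-\epsilon}\,d\sigma_g\le C_0+1
\qquad\text{and}\qquad
\lambda_2(u_j)\ge -C_0-1 .
\]
The same reasoning applied to the eventual maximizer $u_\epsilon$ gives (\ref{existence1}) with $C=C_0$, which is independent of $\epsilon$.

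Next, extract a limit. Since $\{u_j\}$ is bounded in $L^N(\Sigma)$, a subsequence converges weakly in $L^N$ to some $u\ge 0$, and $\int_\Sigma u^N\le 1$. The penalty gives more than weak convergence is worth on its own. The uniform bound on $\int u_j^{-\epsilon}$, combined with convexity of $s\mapsto s^{-\epsilon}$ and weak lower semicontinuity (Mazur's lemma or Jensen), yields $\int u^{-\epsilon}\le \liminf\int u_j^{-\epsilon}\le C_0+1$. Hence $u>0$ a.e., $u\in\D_\epsilon$, and $u\not\equiv 0$. The weak convergence is nonetheless too weak to pass the eigenvalue to the limit. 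I would therefore instead pass to the limit in the weights $w_j:=u_j^{N-2}$, which are bounded in $L^{N/(N-2)}$ and converge weakly to some $w$. Weak limits do not commute with powers, so $w\ne u^{N-2}$ in general. Still, by convexity/concavity (note $N-2=\frac{2}{n-2}$, which may exceed or fall below $1$), one can compare $w$ with $u^{N-2}$ in the direction that matters. The cleanest route is to set $\tilde u:=w^{1/(N-2)}$. Then $\int\tilde u^N=\lim\int u_j^N$-type bounds hold, namely $\int \tilde u^{N}\le\liminf\int u_j^N=1$ by lower semicontinuity of the $L^{N/(N-2)}$ norm. Moreover $\int \tilde u^{-\epsilon}=\int w^{-\epsilon/(N-2)}$ is controlled by convexity of $s\mapsto s^{-\epsilon/(N-2)}$. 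So I plan to take $\tilde u$ as the candidate maximizer.

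The key step is upper semicontinuity of $\lambda_2$, i.e.\ showing $\lambda_2(\tilde u)\ge\limsup\lambda_2(u_j)$. Equivalently, the negative quantity $\lambda_2$ does not drop in the limit. Take normalized eigenfunctions $f^j_1,\phi^j_2$ from Proposition \ref{ExistenceSimplicity} for $u_j$. By Proposition \ref{Boundedness-SeqConfFact} they are bounded in $W^{1,2}(M)$ (the eigenvalues lie in $[-C_0-1,0)$), and by Remark \ref{RMoserBound} they are bounded uniformly in $L^\infty(\Sigma)$. Pass to limits: weakly in $W^{1,2}$, strongly in $L^q(\Sigma)$ for $q<N$, and boundedly a.e. Then $(f^j)^2 w_j\to f^2 w$ weakly in $L^1$, because strong convergence of $(f^j)^2$ in every $L^q$ (via the uniform $L^\infty$ bound) pairs against weakly convergent $w_j$. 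The normalizations and the orthogonality $\int f_1\phi_2 w=0$ therefore survive, and the weak equations pass to the limit with eigenvalues $\lambda_1^j\to\mu_1$ and $\lambda_2^j\to\mu_2$. The limit span of $f_1,\phi_2$ is then a $2$-dimensional space in $Gr_2^{\tilde u}$ on which $\R^{\tilde u}_g\le \mu_2$, by lower semicontinuity of the Dirichlet energy. Hence $\lambda_2(\tilde u)\le\mu_2$.

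That inequality goes the wrong way for maximization, which signals the main obstacle: we need the reverse bound $\lambda_2(\tilde u)\ge \mu_2$. My plan is to argue by min-max stability. Take a test $2$-space $S$ for $\tilde u$ consisting of bounded functions, which are dense by truncation. Because bounded test functions are used, the denominators $\int\phi^2 w_j\to\int\phi^2 w$ converge, so the Rayleigh quotients $\R^{u_j}_g$ on $S$ converge to $\R^{\tilde u}_g$. This gives $\limsup\lambda_2(u_j)\le\lambda_2(\tilde u)$. Since the denominators now enter the min-max linearly, the weak limit of the weights is exactly what is needed, and this is why I pass to $w$ rather than $u^{N-2}$. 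Combining this with $\int\tilde u^N\le 1$, $\lambda_2<0$, and the scale invariance of $F_{2,\epsilon}$, the functional at $\tilde u$ is at least as large as the limit of the first term. For the penalty, Jensen gives $\int\tilde u^{-\epsilon}\le\liminf\int u_j^{-\epsilon}$, and $(\int\tilde u^N)^{\epsilon/N}\le 1$. Together these show $F_{2,\epsilon}(\tilde u)\ge\limsup F_{2,\epsilon}(u_j)=\Lambda_\epsilon$, so $\tilde u$ is maximal. Finally, rescale $\tilde u$ to satisfy (\ref{unitnorm}) and set $u_\epsilon:=\tilde u$.
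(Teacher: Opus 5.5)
Your proposal is correct, and its skeleton matches the paper's. The paper also takes the weak limit $v$ of $v_j=u_j^{N-2}$ in $L^{n-1}(\Sigma)$; your detour through the weak $L^N$ limit of $u_j$ is unnecessary. It sets $u_\epsilon=v^{1/(N-2)}$, so that $\int_\Sigma u_\epsilon^N\,d\sigma_g\le 1$. It handles the penalty by weak lower semicontinuity of the convex functional $\varphi\mapsto\int_\Sigma\varphi^{-\epsilon/(N-2)}\,d\sigma_g$, and it gets (\ref{existence1}) by comparing with $u\equiv 1$.

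The real difference is the eigenvalue step. The paper proves two-sided convergence $\lambda_2(u_j)\to\lambda_2(u_\epsilon)$ (Lemma \ref{ConvergenceEigenvalues}). That requires:
\begin{itemize}
\item a lower bound on $\lambda_1(u_j)$ (Lemma \ref{RemainsBounded});
\item uniform $W^{1,2}(M)\cap L^\infty(\Sigma)$ bounds on the normalized first and second eigenfunctions;
\item a.e. positivity of first eigenfunctions (Proposition \ref{ExistenceSimplicity}) to identify the limit of $\lambda_1(u_j)$;
\item Lemma \ref{Char2} and the test functions $\phi_\epsilon-c_jf_j$.
\end{itemize}

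You prove only $\limsup_j\lambda_2(u_j)\le\lambda_2(\tilde u)$, which is all maximality requires. You do this by freezing a $2$-plane $S$ of functions bounded on $\Sigma$. The energy form on $S$ does not depend on $j$. The Gram matrices $\bigl(\int_\Sigma\phi_a\phi_b\,w_j\,d\sigma_g\bigr)$ converge, by weak convergence of $w_j$, to a positive definite limit; this is exactly $S\in Gr_2^{\tilde u}$. Hence $S\in Gr_2^{u_j}$ for large $j$, and the maximum of the Rayleigh quotient on $S$ (the top eigenvalue of the pencil) converges. This route is noticeably lighter.

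You can also skip the truncation/density step. Take $S$ to be the span of the first two eigenfunctions of $\tilde u$. These are bounded on $\Sigma$ by Proposition \ref{Estimates}, since both eigenvalues are negative, and the supremum of $\R_g^{\tilde u}$ on their span equals $\lambda_2(\tilde u)$, as computed in the proof of Lemma \ref{Char2}. What the paper's heavier route buys is full continuity together with identification of the limiting eigenfunctions, a template it reuses in Claim \ref{LimitEigenvalues}.

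One caveat: your paragraph deriving $\lambda_2(\tilde u)\le\mu_2$ is not needed, and as written it has a gap. The bound $F_{2,\epsilon}(u_j)\ge -C_0-1$ controls $\lambda_2(u_j)$ but says nothing about $\lambda_1(u_j)$. So Proposition \ref{Boundedness-SeqConfFact} cannot be applied to the first eigenfunctions $f_1^j$ without a separate argument such as Lemma \ref{RemainsBounded}. Just drop that paragraph.
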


\begin{proof}
Fix $\epsilon>0$, and let $\{u_j\}_{j=1}^\infty\subset \D_\epsilon$ be a maximizing sequence for $F_{2,\epsilon}$: 
\begin{equation}
F_{2,\epsilon}(u_j) \to \alpha:=\sup_{u\in \D_\epsilon} F_{2,\epsilon}(u); \text{ and }\int_\Sigma u_j^N\;d\sigma_g=1.
\end{equation}
If we set $v_j = u_j^{N-2} = u_j^{\frac{2}{n-2}}$, then
\begin{equation}
\int_\Sigma v_j^{n-1}\;d\sigma_g = \int_\Sigma u_j^N\;d\sigma_g = 1,
\end{equation}
and thus $\{v_j\}_{j=1}^\infty$ is bounded in $L^{n-1}(\Sigma)$. By weak compactness, up to the extraction of a subsequence, there exists a $v\in L^{n-1}(\Sigma)$ such that $v_j \rightharpoonup v$ in $L^{n-1}(\Sigma)$. Therefore,  $v\ge 0$ a.e. and  $\int_\Sigma v^{n-1}\;d\sigma_g \le 1$. We set $u_\epsilon := v^{\frac{1}{N-2}}$, thus
\begin{equation}\label{existence2}
\int_\Sigma u_\epsilon^N\;d\sigma_g \le 1.
\end{equation}

We claim that $u_\epsilon$ is maximal for $F_{2,\epsilon}$ and that it satisfies (\ref{existence1}). First, for large $j$, we may assume that 
\begin{equation}
F_{2,\epsilon}(u_j) \ge F_{2,\epsilon}(1) \iff \lambda_2(u_j) - \int_\Sigma u_j^{-\epsilon}\;d\sigma_g \ge \lambda_2(\D_g) - 1,
\end{equation}
from which we deduce
\begin{equation}
\int_\Sigma u_j^{-\epsilon}\;d\sigma_g \le C(g).
\end{equation}
Now, the function $t\mapsto t^{-\frac{\epsilon}{N-2}}$ is convex on $\mathbb{R}^+$, and so is the functional
\begin{equation}\label{existence3}
\varphi \longmapsto \int_\Sigma \varphi^{-\frac{\epsilon}{N-2}}\;d\sigma_g 
\end{equation}
on $L^{n-1}_{\ge 0}(\Sigma)$. This functional is also strongly lower semicontinuous. Since $L^{n-1}_{\ge 0}(\Sigma)$ is strongly closed in $L^{n-1}(\Sigma)$, it follows that the functional in (\ref{existence3}) is weak lower semicontinuous on $L^{n-1}_{\ge 0}(\Sigma)$; see Chapter 3 in \cite{Brezis}. Therefore, 
\begin{equation}
\begin{split}
\int_\Sigma u_\epsilon^{-\epsilon}\;d\sigma_g =\int_\Sigma v^{-\frac{\epsilon}{N-2}}\;d\sigma_g &\le \liminf \int_\Sigma v_j^{-\frac{\epsilon}{N-2}}\;d\sigma_g = \liminf \int_\Sigma u_j^{-\epsilon}\;d\sigma_g\\& \le C(g).
\end{split}
\end{equation}
Hence, $u_\epsilon\in \D_\epsilon$. 

It remains to show that $u_\epsilon$ is maximal. By Lemma \ref{ConvergenceEigenvalues} below, up to the extraction of a subsequence, we have $\lim_{j\to \infty} \lambda_2(u_j) = \lambda_2(u_\epsilon)$. Therefore, using (\ref{existence2}), we get
\begin{equation}\label{existence4}
\limsup_{j\to\infty} \lambda_2(u_j) = \lambda_2(u_\epsilon) \le \lambda_2(u_\epsilon)\left(\int_\Sigma u_\epsilon^N\;d\sigma\right)^{\frac{N-2}{N}}.
\end{equation}
Moreover, using the weak lower semicontinuity of (\ref{existence3}),
\begin{equation}\label{existence5}
\left(\int_\Sigma u_\epsilon^{-\epsilon}\;d\sigma_g\right)\left(\int_\Sigma u_\epsilon^N\;d\sigma_g\right)^{\frac{\epsilon}{N}}\le \liminf_{j\to\infty} \int_\Sigma u_j^{-\epsilon}\;d\sigma_g.
\end{equation}
Putting (\ref{existence4}) and (\ref{existence5}) together, we deduce $\alpha\ge F_{2,\epsilon} (u_\epsilon)\ge \limsup_{j\to \infty} F_{2,\epsilon}(u_j) = \alpha$, which shows that $u_\epsilon$ is a maximizer. Finally, by scale invariance, we can scale to get $\int_\Sigma u_\epsilon^N\;d\sigma_g = 1$.  
\end{proof}

\begin{lemma}\label{RemainsBounded}
In the notation of the proof of Proposition \ref{existence}, we have
\begin{equation}
\lambda_1(u_j) \ge - C>-\infty,
\end{equation}
where $C$ is independent of $j$. 
\end{lemma}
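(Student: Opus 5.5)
We argue by contradiction, working directly with normalized first generalized eigenfunctions of $u_j$. The key inputs are the uniform bound $\int_\Sigma u_j^{-\epsilon}\,d\sigma_g\le C(g)$ from the proof of Proposition \ref{existence}, the normalization $\int_\Sigma u_j^N\,d\sigma_g=1$, and the fact that $h_g$ is a negative constant. The bound on $\int_\Sigma u_j^{-\epsilon}$ will prevent the weight $u_j^{N-2}$ from killing the boundary mass of a sequence that converges strongly in $L^2(\Sigma)$.

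\emph{Setup.} Since $u_j\in\D_\epsilon\subseteq L^N_{>0}(\Sigma)$, Proposition \ref{finite} gives $\lambda_1(u_j)>-\infty$, and Proposition \ref{ExistenceSimplicity} gives $f_j\ge 0$ with $\int_\Sigma f_j^2u_j^{N-2}\,d\sigma_g=1$ and
\[
\int_M|\nabla_g f_j|^2\,dv_g+2c_nh_g\int_\Sigma f_j^2\,d\sigma_g=\lambda_1(u_j)<0 .
\]
Suppose, after passing to a subsequence, that $\lambda_1(u_j)\to-\infty$. Set $w_j:=f_j\|f_j\|_{L^2(\Sigma)}^{-1}$. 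Then
\[
\int_M|\nabla_g w_j|^2\,dv_g\le 2c_n(-h_g),
\]
so by Friedrichs's inequality (\ref{FriedrichsInequality}) the sequence $\{w_j\}$ is bounded in $W^{1,2}(M)$. Up to a subsequence, $w_j\rightharpoonup w$ in $W^{1,2}(M)$ and, by the compact trace embedding (\ref{STE}), $w_j\to w$ in $L^2(\Sigma)$. In particular $\|w\|_{L^2(\Sigma)}=1$.

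\emph{The weighted boundary mass vanishes.} Dividing the energy identity by $\|f_j\|^2_{L^2(\Sigma)}$ gives
\[
\lambda_1(u_j)\int_\Sigma w_j^2u_j^{N-2}\,d\sigma_g=\int_M|\nabla_g w_j|^2\,dv_g+2c_nh_g\ge 2c_nh_g .
\]
Since $\lambda_1(u_j)\to-\infty$, this forces $\int_\Sigma w_j^2u_j^{N-2}\,d\sigma_g\to 0$.

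\emph{Reverse Hölder step.} This is the main step; it is the only place the regularization $\D_\epsilon$ enters. Choose $q\in(0,2)$ with $\frac{(N-2)q}{2-q}=\epsilon$. Hölder's inequality with exponents $\frac{2}{q}$ and $\frac{2}{2-q}$ gives
\[
\int_\Sigma |w_j|^q\,d\sigma_g=\int_\Sigma\big(|w_j|u_j^{\frac{N-2}{2}}\big)^q u_j^{-\frac{(N-2)q}{2}}\,d\sigma_g\le\Big(\int_\Sigma w_j^2u_j^{N-2}\,d\sigma_g\Big)^{\frac q2}\Big(\int_\Sigma u_j^{-\epsilon}\,d\sigma_g\Big)^{\frac{2-q}{2}}.
\]
The second factor is bounded by $C(g)^{(2-q)/2}$ and the first tends to $0$, so $\int_\Sigma|w_j|^q\,d\sigma_g\to0$.

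\emph{Contradiction.} Since $\mathrm{Vol}(\Sigma,d\sigma_g)=1$ is finite and $q<2$, strong convergence in $L^2(\Sigma)$ implies convergence in $L^q(\Sigma)$ (in the quasi-norm sense when $q<1$). Hence $\int_\Sigma|w|^q\,d\sigma_g=0$, so $w=0$ a.e. on $\Sigma$. This contradicts $\|w\|_{L^2(\Sigma)}=1$. Therefore $\lambda_1(u_j)$ is bounded below by a constant independent of $j$; it depends only on $g$, $\epsilon$ and the bound $C(g)$.
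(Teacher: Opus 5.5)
Your proof is correct, and it takes a more direct route than the paper's. The paper also normalizes $\tilde f_j=f_j\|f_j\|_{L^2(\Sigma)}^{-1}$ and obtains a limit $f\ge 0$ with $\|f\|_{L^2(\Sigma)}=1$. From there it passes to the weak limit $u_j^{N-2}\rightharpoonup u_\epsilon^{N-2}$ in $L^{n-1}(\Sigma)$. Using $\int_\Sigma \tilde f_j u_j^{N-2}\,d\sigma_g\to 0$ (Cauchy--Schwarz) and H\"older's inequality, it gets $\int_\Sigma f u_\epsilon^{N-2}\,d\sigma_g=0$. It then concludes $f|_\Sigma=0$ from $f\ge 0$ and $u_\epsilon>0$ a.e.

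That positivity of $u_\epsilon$ is itself a consequence of the same bound $\int_\Sigma u_j^{-\epsilon}\,d\sigma_g\le C(g)$. The paper transfers it to $u_\epsilon$ through weak lower semicontinuity of the convex functional $\varphi\mapsto\int_\Sigma\varphi^{-\epsilon/(N-2)}\,d\sigma_g$. Your reverse H\"older step uses that bound directly along the sequence. You therefore need neither the weak limit $u_\epsilon$ nor the sign of $f_j$. Your lower bound is also uniform over the whole class $\{u\in\D_\epsilon:\int_\Sigma u^{-\epsilon}\,d\sigma_g\le C\}$, which justifies your closing remark about what the constant depends on.

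The price is that your exponent $q=2\epsilon/(N-2+\epsilon)$ degenerates as $\epsilon\to 0$. For that reason your argument does not carry over to the second part of Proposition \ref{EigenvalEstimate}, where $\epsilon_j\to 0$. There the paper reruns its limit-based template with Fatou's lemma, using only that the pointwise limit $u$ is positive a.e. on $\Sigma$.
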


\begin{proof}
Assume on the contrary that along some subsequence, still denoted by $\{u_j\}_{j=1}^\infty$, we have $\lim_{j\to \infty} \lambda_1(u_j) = -\infty$. Since each $u_j^{-1}(0)$ has zero (boundary) measure, by Proposition \ref{ExistenceSimplicity}, there is an associated sequence $\{f_j\}_{j=1}^\infty$ of first generalized eigenfunctions with $f_j> 0 $ a.e. on $\Sigma$ and $\int_\Sigma f_j^2u_j^{N-2}\;d\sigma_g = 1$. Since each of these satisfies
\begin{equation}\label{RB2}
\int_M |\nabla_g f_j|^2\;dv_g + 2c_nh_g \int_\Sigma f_j^2\;d\sigma  = \lambda_1(u_j)
\end{equation}
and $\lambda_1(u_j)\to -\infty$, we conclude that $\lim_{j\to\infty}\int_\Sigma f_j^2\;d\sigma_g = \infty \;\;(h_g<0)$.

Consider the new sequence $\tilde f_j = f_j\|f_j\|_{L^2(\Sigma)}^{-1}$. Due to linearity, each of these satisfies
\begin{equation}\label{RB5}
\int_M |\nabla_g \tilde f_j|^2\;dv_g + 2c_nh_g\int_\Sigma \tilde f_j^2\;d\sigma_g = \lambda_1(u_j)\int_\Sigma \tilde f_j^2u_j^{N-2}\;d\sigma_g.
\end{equation}
Therefore, $\int_M |\nabla_g\tilde f_j|^2\;dv_g \le 2c_n(-h_g)$, and we conclude that $\{\tilde f_j\}_{j=1}^\infty$ is bounded in $W^{1,2}(M)$ by (\ref{FriedrichsInequality}). Standard compactness then yields a function $f\in W^{1,2}(M)$ such that $\tilde f_j \rightharpoonup f \text{ in }W^{1,2}(M)$, $\tilde f_j \to f \text{ in }L^2(M)$, $\tilde f_j \to f \text{ in }L^{q}(\Sigma)$ for $1<q<N$. Since $\int_\Sigma \tilde f^2_j\;d\sigma_g = 1$ for all $j$, strong convergence gives $\int_\Sigma f^2\;d\sigma_g = 1$. On the other hand,
\begin{equation}\label{RB9}
\lim_{j\to \infty} \int_\Sigma \tilde f_j^2 u_j^{N-2}\;d\sigma_g = \lim_{j\to \infty}\frac{1}{\|f_j\|^2_{L^2(\Sigma)}} = 0,
\end{equation}
which implies
\[
0\le \lim_{j\to\infty}\int_\Sigma \tilde f_j u_j^{N-2}\;d\sigma_g \le \lim_{j\to \infty} \left(\int_\Sigma \tilde f_j^2u_j^{N-2}\;d\sigma_g\right)^{\frac{1}{2}}\left(\int_\Sigma u_j^{N-2}\;d\sigma_g\right)^{\frac{1}{2}} = 0.
\]
Furthermore, 
\begin{equation}
\begin{split}
\left|\int_\Sigma \tilde f_ju_j^{N-2} - fu_\epsilon^{N-2}\;d\sigma_g\right| &\le \left|\int_\Sigma (\tilde f_j - f)u_j^{N-2}\;d\sigma_g\right|\\&\hspace{.15in}+\left|\int_\Sigma f(u_j^{N-2}-u_\epsilon^{N-2})\;d\sigma_g\right|,
\end{split}
\end{equation}
where the first term goes to zero by H\"older's inequality, while the second term goes to zero by weak convergence in $L^{n-1}(\Sigma)$. This gives $\int_\Sigma fu_\epsilon^{N-2}\;d\sigma =0$, thus $f= 0$ on $\Sigma$ since $u_\epsilon>0$ a.e. on $\Sigma$, contradicting $\int_\Sigma f^2\;d\sigma_g = 1$. Hence, $\{\lambda_1(u_j)\}_{j=1}^\infty$ remains bounded. 
\end{proof}

\begin{lemma}\label{ConvergenceEigenvalues}
In the notation of the proof of Proposition \ref{existence}, we show that, after possibly extracting a subsequence, 
\begin{equation}\label{CE1}
\lim_{j\to\infty} \lambda_1(u_j) = \lambda_1(u_\epsilon) \text{ and }\lim_{j\to \infty}\lambda_2(u_j) = \lambda_2(u_\epsilon).
\end{equation}
\end{lemma}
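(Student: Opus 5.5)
We prove the two limits separately, in each case establishing an upper bound on $\limsup$ and a lower bound on $\liminf$; the $\lambda_2$ case is modelled on the $\lambda_1$ case. Throughout, $v_j=u_j^{N-2}\rightharpoonup v=u_\epsilon^{N-2}$ weakly in $L^{n-1}(\Sigma)$. Recall also that $u_\epsilon\in\D_\epsilon\subseteq L^N_{>0}(\Sigma)$, so Propositions \ref{ExistenceSimplicity}, \ref{NegativeEigen} and Lemma \ref{Char2} apply to $u_\epsilon$.

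\textbf{The $\limsup$ bounds.} Given a test function $\phi\in W^{1,2}(M)$, its trace $\phi^2$ lies in $L^{N/2}(\Sigma)$, and $N/2$ is exactly the dual exponent of $n-1$. Hence $\int_\Sigma \phi^2 v_j\to\int_\Sigma\phi^2 v$. The numerator of $\R_g^{u}(\phi)$ does not depend on $u$. Testing with the first eigenfunction $f_1$ of $u_\epsilon$ therefore gives $\limsup_j\lambda_1(u_j)\le\lambda_1(u_\epsilon)$. For $\lambda_2$, I would take $S=\mathrm{span}\{f_1,\phi_2\}$ for $u_\epsilon$. The Gram matrices $\big(\int_\Sigma \psi_a\psi_b v_j\big)$ converge to the Gram matrix for $v$, which is the identity. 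The Rayleigh quotient on $S$ is a ratio of quadratic forms in two variables, so its maximum converges. This gives $\limsup_j\lambda_2(u_j)\le\lambda_2(u_\epsilon)<0$. Since each $u_j\in L^N_{>0}(\Sigma)$, we also have $S\in Gr_2^{u_j}$.

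\textbf{The $\liminf$ bounds.} By Lemma \ref{RemainsBounded} and Proposition \ref{NegativeEigen}, $-C\le\lambda_1(u_j)\le\lambda_2(u_j)<0$, so after extracting a subsequence both sequences converge, to $\mu_1\le\mu_2\le\lambda_2(u_\epsilon)$. Let $f_j,\phi_j$ be the eigenfunctions from Proposition \ref{ExistenceSimplicity}, normalized and $u_j$-orthogonal. By Proposition \ref{Boundedness-SeqConfFact} they are bounded in $W^{1,2}(M)$, so along a subsequence they converge weakly in $W^{1,2}(M)$ and strongly in $L^q(\Sigma)$ for $q<N$, with limits $f,\phi$. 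We pass to the limit in the weak equation. The term $\int_\Sigma \phi_j\psi v_j$ converges because $\phi_j\psi\to\phi\psi$ strongly in $L^{q}$ and $v_j$ is bounded in $L^{n-1}$; this needs $q'$ compatible with $n-1$, and I would handle it by a truncation/Hölder argument using the uniform $L^\infty(\Sigma)$ bounds on $\phi_j$ from Remark \ref{RMoserBound}. The cleanest route is to note that Remark \ref{RMoserBound} gives $\|\phi_j\|_{L^\infty(\Sigma)}\le C$. Then $\phi_j\to\phi$ in every $L^q(\Sigma)$ with $q<\infty$, in particular in $L^{N/2\cdot 2}$, and weak convergence of $v_j$ handles all weighted integrals. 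The conclusion is that $f,\phi$ solve (\ref{EigenValEq}) for $u_\epsilon$ with eigenvalues $\mu_1,\mu_2$. Moreover,
\[
\int_\Sigma f^2v=\int_\Sigma\phi^2v=1,\qquad \int_\Sigma f\phi v=0.
\]

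\textbf{Conclusion.} The normalization and orthogonality make $\mathrm{span}\{f,\phi\}$ an element of $Gr_2^{u_\epsilon}$. On this span the Rayleigh quotient is at most $\mu_2$, because the eigen-equations diagonalize the numerator as $a^2\mu_1+b^2\mu_2$. Hence $\lambda_2(u_\epsilon)\le\mu_2$. Similarly $\lambda_1(u_\epsilon)\le\R^{u_\epsilon}_g(f)=\mu_1$. Combined with the $\limsup$ bounds, this proves (\ref{CE1}).

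The main obstacle is the limiting step for the weighted boundary integrals. There $v_j$ converges only weakly, in the critical space $L^{n-1}(\Sigma)$, so the uniform $L^\infty(\Sigma)$ bound from Remark \ref{RMoserBound} is essential. One must check that this remark applies with the varying eigenvalue $\lambda_k(u_j)<0$; the constants should depend only on $g$ and the $W^{1,2}$ bound, since the negative-sign term was simply discarded in the Moser iteration.
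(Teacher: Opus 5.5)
Your proof is correct, but it identifies the limits differently from the paper. The compactness step is the same in both: $f_j,\phi_j$ are bounded by Proposition \ref{Boundedness-SeqConfFact}, uniformly bounded in $L^\infty(\Sigma)$ by Remark \ref{RMoserBound}, and one passes to the limit in the weak equation and in (\ref{CE5}).

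\textbf{How the paper identifies the limits.} It uses a sign argument for $\lambda_1$. Testing against the a.e.\ positive first eigenfunction $f_1$ of $u_\epsilon$ gives $(\lambda_1(u_\epsilon)-\lambda_1)\int_\Sigma f_1 f u_\epsilon^{N-2}\,d\sigma_g=0$ with $f_1f\ge 0$. Hence $\lambda_1=\lambda_1(u_\epsilon)$ and $f=f_1$ on $\Sigma$. It then obtains $\lambda_2(u_\epsilon)\le\lambda_2$ from Lemma \ref{Char2} and orthogonality to $f$. For $\lambda_2\le\lambda_2(u_\epsilon)$ it inserts $\phi_\epsilon-c_jf_j$ into Lemma \ref{Char2} for $u_j$. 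This needs $c_j\to 0$, and hence both the convergence $f_j\to f$ and the identification $f=f_1$.

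\textbf{How your route differs.} You run a symmetric min--max sandwich.
\begin{itemize}
\item Your upper bounds use only fixed test spaces for $u_\epsilon$ and the exact duality $L^{N/2}(\Sigma)=(L^{n-1}(\Sigma))'$. They need no eigenfunction compactness at all.
\item Your lower bounds use the $u_\epsilon$-orthonormal plane spanned by the limits, on which the numerator is diagonal.
\end{itemize}
You never use positivity of the $f_j$, simplicity of $\lambda_1$, or Lemma \ref{Char2}. The argument therefore extends verbatim to every negative eigenvalue $\lambda_k(u_j)$, $k\le N([g])$. The paper's route is tailored to $k=1,2$. You also recover its by-product $f=f_1$, since $\R_g^{u_\epsilon}(f)=\mu_1=\lambda_1(u_\epsilon)$.

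\textbf{Two small remarks.}
\begin{itemize}
\item The $L^\infty(\Sigma)$ bound is not needed to pass to the limit in the weak equation. There $\phi_j\to\phi$ in $L^{N/2}(\Sigma)$ with $N/2<N$, paired against $v_j$ bounded in $L^{n-1}(\Sigma)$, already suffices. The bound is needed only for the quadratic quantities in (\ref{CE5}), exactly as in the paper.
\item Your worry about applying Remark \ref{RMoserBound} with varying eigenvalues is handled as the paper implicitly does. Write $\lambda_k(u_j)u_j^{N-2}=-\bigl(|\lambda_k(u_j)|^{\frac{1}{N-2}}u_j\bigr)^{N-2}$, so the equation takes the form (\ref{SEigenReg1}). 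The weight then enters only through the nonpositive term discarded in (\ref{SEigenReg4}).
\end{itemize}
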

\begin{proof}
By Lemma \ref{RemainsBounded}, up to the extraction of a subsequence, both limits $\lambda_1=\lim_{j\to\infty} \lambda_1(u_j)$ and $\lambda_2=\lim_{j\to \infty} \lambda_2(u_j)$ exist. Our goal is to show that $\lambda_1=\lambda_1(u_\epsilon)$ and $\lambda_2=\lambda_2(u_\epsilon)$. 
We show first that both $\lambda_1$ and $\lambda_2$ are generalized eigenvalues. 

By Proposition \ref{ExistenceSimplicity}, for each $u_j$, there are corresponding sequences of first and second generalized eigenfunctions $\{f_j\}_{j=1}^\infty$ and $\{\phi_j\}_{j=1}^\infty$ satisfying (\ref{EigenfunctionProp}). By Proposition \ref{Boundedness-SeqConfFact}, both sequences are bounded in $W^{1,2}(M)$. Moreover, these are bounded in $L^\infty(\Sigma)$ by Remark \ref{RMoserBound} after Proposition \ref{Estimates}. Let $f\ge 0$ and $\phi$ be the corresponding limit functions. Since each $f_j$ is a first generalized eigenfunction, we have 
\begin{equation}\label{CE2}
\int_M \langle\nabla_g f_j,\nabla_g\psi \rangle\;dv_g + 2c_n\int_\Sigma h_g f_j\psi\;d\sigma_g = \lambda_1(u_j) \int_\Sigma f_j \psi u_j^{N-2}\;d\sigma_g
\end{equation}
 for any test function $\psi\in C^\infty(\overline{M})$. By weak convergence in $W^{1,2}(M)$ and strong convergence in $L^2(\Sigma)$, the left hand side of (\ref{CE2}) goes to 
\begin{equation}\label{CE3}
\int_M \langle \nabla_g f, \nabla_g \psi \rangle\;dv_g + 2c_n\int_\Sigma h_g f\psi\;d\sigma_g.
\end{equation}
On the other hand, 
\begin{equation}
\begin{split}
\left|\int_\Sigma (f_j\psi u_j^{N-2} - f \psi u_\epsilon^{N-2}) \;d\sigma_g\right| \le &C\left|\int_\Sigma(f_j-f)u_j^{N-2}\;d\sigma_g\right|\\ &+ C\left|\int_\Sigma f(u_j^{N-2}-u_\epsilon^{N-2})\;d\sigma_g\right|,
\end{split}
\end{equation}
where $C=\|\psi\|_{L^\infty(\Sigma)}$. As in the proof of Lemma \ref{RemainsBounded}, the first term goes to zero by H\"older's inequality, while the second term goes to zero by weak convergence of the $u_j$'s in $L^{n-1}(\Sigma)$. Therefore, the right hand side of (\ref{CE2}) goes to
\begin{equation}\label{CE4}
\lambda_1\int_\Sigma f\psi u_\epsilon^{N-2}\;d\sigma_g.
\end{equation}
Putting (\ref{CE3}) and (\ref{CE4}) together, we conclude that $f$ is a weak solution of the system
\begin{equation}
\begin{cases}
\Delta_g f &= 0\; (\text{in } M);\\
B_g(f) &= \lambda_1 f u_\epsilon^{N-2}\; (\text{on } \Sigma). 
\end{cases}
\end{equation}
Hence, $\lambda_1$ is a generalized eigenvalue. A similar argument shows that $\lambda_2$ is a generalized eigenvalue as well. 

We claim that 
\begin{equation}\label{CE5}
\int_\Sigma f^2u_\epsilon^{N-2}\;d\sigma_g = \int_\Sigma \phi^2 u_\epsilon^{N-2}\;d\sigma_g = 1; \int_\Sigma f\phi u_\epsilon^{N-2}\;d\sigma_g = 0.
\end{equation}
We provide the arguments for the last equality as the details for all of them are similar. We estimate as follows:
\begin{equation}\label{CE6}
\begin{split}
\left|\int_\Sigma f\phi u_\epsilon^{N-2}\;d\sigma_g\right| = &\left| \int_\Sigma (f\phi u_\epsilon^{N-2} - f_j\phi_ju_j^{N-2})\;d\sigma_g\right| \\ \le &\underbrace{\left|\int_\Sigma (f\phi - f_j\phi_j)u_j^{N-2}\;d\sigma_g\right|}_{=A_j} \\ & +\left|\int_\Sigma f\phi(u_\epsilon^{N-2} - u_j^{N-2})\;d\sigma_g\right|.
\end{split}
\end{equation}
Since $|f\phi- f_j\phi_j| \le |\phi||f-f_j| + |f_j||\phi - \phi_j| \le C(|f-f_j| + |\phi - \phi_j|)$ by the uniform bound in $L^\infty(\Sigma)$, $f_j\phi_j \to f\phi$ in $L^{\frac{N}{2}}(\Sigma)$, which gives that $A_j$ goes to zero by H\"older's inequality. The second term in (\ref{CE6}) goes to zero by weak convergence of the $u_j$'s in $L^{n-1}(\Sigma)$.
 
Let us proceed now with the proof of $\lambda_1 = \lambda_1(u)$. Let $f_1$ be a first generalized eigenfunction, which we can assume to be positive a.e. on $\Sigma$ by Proposition \ref{ExistenceSimplicity}. From the variational formulation of the boundary eigenvalue problem, we have
\begin{equation}\label{CE7}
(\lambda_1(u_\epsilon) - \lambda_1)\int_\Sigma f_1fu_\epsilon^{N-2}\;d\sigma_g = 0.
\end{equation}
Since $f_1f\ge 0$ on $\Sigma$ and $u_\epsilon^{-1}(0)$ has zero measure, if $\lambda_1 \not = \lambda_1(u_\epsilon)$, then $f_1f=0$ a.e. on $\Sigma$. However, by Proposition \ref{ExistenceSimplicity}, $f_1{_{|_\Sigma}}$ can only vanish on a set of zero measure, thus $f=0$ on $\Sigma$. This contradicts (\ref{CE5}), hence $\lambda_1=\lambda_1(u_\epsilon)$ (and $f=f_1$ on $\Sigma$).

To prove that $\lambda_2=\lambda_2(u_\epsilon)$, we use Lemma \ref{Char2}. From the orthogonality condition in (\ref{CE5}), we deduce
\begin{equation}
\lambda_2(u_\epsilon) \le \lambda_2.
\end{equation}
To obtain the opposite inequality, let $\phi_\epsilon$ be an eigenfunction associated with $\lambda_2(u_\epsilon)$ satisfying 
\begin{equation}\label{CE8}
\int_\Sigma f^2u_\epsilon^{N-2}\;d\sigma_g = \int_\Sigma \phi_\epsilon^2 u_\epsilon^{N-2}\;d\sigma_g = 1; \int_\Sigma f\phi_\epsilon u_\epsilon^{N-2}\;d\sigma_g = 0.
\end{equation}
We apply Lemma \ref{Char2} to each element of the sequence $\{u_j\}_{j=1}^\infty$ using as a test function the projection of $\phi_\epsilon$ onto $E_1(u_j)$, that is, 
\begin{equation}
\begin{split}
\lambda_2(u_j) & \le \R_g^{u_j} \Bigg{(}\phi_\epsilon - \underbrace{\left(\int_\Sigma \phi_\epsilon f_ju_j^{N-2}\;d\sigma_g\right)}_{=c_j} \cdot f_j\Bigg{)} \\  &= \frac{\lambda_2(u_\epsilon) - c_j^2\lambda_1(u_j)}{\int_\Sigma \phi_\epsilon^2u_j^{N-2}\;d\sigma_g -c_j^2}.
\end{split}
\end{equation}
Since $c_j\to 0$ and $\int_\Sigma \phi_\epsilon^2u_j^{N-2}\;d\sigma_g \to 1$ as $j\to\infty$, we conclude
\begin{equation}
\lambda_2 = \lim_{j\to \infty}\lambda_2(u_j) \le \lambda_2(u_\epsilon),
\end{equation}
and the proof is complete. 
\end{proof}

In the next proposition, the Euler--Lagrange equation satisfied by each maximal function $u_\epsilon\in \D_\epsilon$ for $F_{2,\epsilon}$ is derived. The proof follows from a Hahn--Banach separation argument, which is now standard in eigenvalue optimization. For the proof in this specific context, see Proposition 4.3 in \cite{Gursky}. After replacing the interior Sobolev spaces by the corresponding boundary spaces, the argument is the same and thus we have decided to omit it here.

\begin{proposition}\label{EulerEq}
Let $u_\epsilon\in\D_\epsilon$ be a maximal function provided by Proposition \ref{existence}. Then there is a set of generalized eigenfunctions $\{\phi_{i,\epsilon}\}_{i=1}^{k_\epsilon}\subset E_2(u_\epsilon)$, normalized by 
\begin{equation}
\int_\Sigma \phi_{i,\epsilon}^2u_\epsilon^{N-2}\;d\sigma_g = 1,
\end{equation}
and a set of real numbers $c_{1,\epsilon},\cdots,c_{k_\epsilon,\epsilon}\ge 0$ with $\sum_{i=1}^{k_\epsilon}c_{i,\epsilon}=1$, such that 
\begin{equation}\label{EulerEq2}
\gamma_{1,\epsilon}u_\epsilon^N - u_{\epsilon}^{N-2}\sum_{i=1}^{k_\epsilon}c_{i,\epsilon}\phi_{i,\epsilon}^2 - \gamma_{2,\epsilon}u_\epsilon^{-\epsilon} = 0.
\end{equation}
\end{proposition}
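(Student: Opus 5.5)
We will follow the Hahn--Banach separation argument of Proposition 4.3 in \cite{Gursky}, transplanted to the boundary. Fix the maximizer $u_\epsilon\in\D_\epsilon$ with $\int_\Sigma u_\epsilon^N\,d\sigma_g=1$, and write $E=E_2(u_\epsilon)$. By Corollary \ref{E2-FiniteDim}, $E$ is finite dimensional, and its unit sphere $(E)_1$ (with respect to the weighted norm $\int_\Sigma\phi^2u_\epsilon^{N-2}$) is compact. Since $u_\epsilon$ is maximal, it is extremal. Combining Definition \ref{DefExtremal} with Proposition \ref{FirstVarFor}, and recalling that $(N-2)\lambda_2(u_\epsilon)<0$, we get that for every $h\in L^\infty(\Sigma)$ there is some $\phi\in(E)_1$ with
\[
\int_\Sigma h\left(\gamma_{1,\epsilon}u_\epsilon^N-\phi^2u_\epsilon^{N-2}-\gamma_{2,\epsilon}u_\epsilon^{-\epsilon}\right)d\sigma_g \le 0 .
\]
By applying this to $-h$ as well, we obtain a second eigenfunction making the same integral $\ge 0$. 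In other words, $\sup_\phi$ and $\inf_\phi$ of the linear functional $\ell_h(\phi):=\int_\Sigma h(\cdots)$ over $(E)_1$ bracket $0$.

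Next we set up the convex sets in $L^1(\Sigma)$. Let
\[
K:=\overline{\mathrm{conv}}\{\phi^2u_\epsilon^{N-2}:\phi\in(E)_1\}\subset L^1(\Sigma),
\]
and let $w:=\gamma_{1,\epsilon}u_\epsilon^N-\gamma_{2,\epsilon}u_\epsilon^{-\epsilon}$. Both pieces lie in $L^1$: $u_\epsilon\in L^N$ and $u_\epsilon^{-\epsilon}\in L^1$, and each $\phi\in E$ is in $L^\infty(\Sigma)$ by Remark \ref{GenEigenBounds}. We claim $w\in K$. If not, Hahn--Banach in $L^1$ (whose dual is $L^\infty$) produces $h\in L^\infty(\Sigma)$ and $\delta>0$ with $\int_\Sigma hw > \int_\Sigma h k+\delta$ for all $k\in K$. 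Then $\ell_h(\phi)>\delta$ for every $\phi\in(E)_1$, which contradicts the bracketing above.

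The main point to check is that $K$ is exactly the convex hull of a compact set, so that no closure is needed. Compactness of $\{\phi^2u_\epsilon^{N-2}\}$ in $L^1$ follows because the map $\phi\mapsto\phi^2u_\epsilon^{N-2}$ is continuous from the finite-dimensional space $E$, with the $L^\infty(\Sigma)$ bounds, into $L^1$. Inside the finite-dimensional space spanned by the functions $\phi_a\phi_bu_\epsilon^{N-2}$ for a basis $\{\phi_a\}$ of $E$, the convex hull of a compact set is compact, hence closed. Carathéodory's theorem in that finite-dimensional span then writes
\[
w=\sum_{i=1}^{k_\epsilon}c_{i,\epsilon}\phi_{i,\epsilon}^2u_\epsilon^{N-2},\qquad c_{i,\epsilon}\ge0,\quad \sum_i c_{i,\epsilon}=1,
\]
with $\phi_{i,\epsilon}\in(E)_1$. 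This is exactly (\ref{EulerEq2}).

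The only delicate verifications are the following two. First, the test deformations $u_\epsilon(1+th)$ must stay in $\D_\epsilon$ for small $|t|$; this holds since $1+th$ is bounded above and below by positive constants. Second, the sign bookkeeping in Proposition \ref{FirstVarFor} must be tracked correctly, because $\lambda_2(u_\epsilon)<0$ flips the inf and sup.
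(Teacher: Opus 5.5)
Your proposal is correct and is essentially the paper's route: the paper omits this proof and defers to the Hahn--Banach separation argument of Proposition 4.3 in \cite{Gursky}, and you carry out exactly that argument with the $L^1(\Sigma)$--$L^\infty(\Sigma)$ duality on the boundary. Your sign bookkeeping also checks out: the right and left derivatives are $(N-2)\lambda_2(u_\epsilon)\sup_{\phi}\ell_h(\phi)$ and $(N-2)\lambda_2(u_\epsilon)\inf_{\phi}\ell_h(\phi)$, so extremality gives $\inf\ell_h\le 0\le \sup\ell_h$; the one detail worth stating explicitly is that $\int_\Sigma \phi^2u_\epsilon^{N-2}\,d\sigma_g$ is positive definite on $E_2(u_\epsilon)$ (because $u_\epsilon>0$ a.e. and a nonzero eigenfunction has nonzero trace), which is what makes $(E)_1$ compact and the convex hull closed.
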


\section{Uniform Estimates in $\epsilon$}\label{UniformEstimates}
The goal of this section is to establish $\epsilon$-independent estimates. Take any sequence of positive numbers $\epsilon_j \to 0^+$. Notice that the sequence $\{k_{\epsilon_j}\}_{j=1}^\infty$ arising from Proposition \ref{EulerEq} is a sequence of positive integers satisfying
\begin{equation}\label{MultiplicityBound}
1\le k_{\epsilon_j} \le N([g])-1,
\end{equation}
and thus up to a subsequence we can assume that $k=k_{\epsilon_j}$ is constant. Indeed, from Carath\'eodory's theorem we know that $k_{\epsilon_j} \le \frac{N([g])(N([g])-1)}{2}+1$. However, up to an appropriate change of basis of $E_2(u_{\epsilon_j})$, we can assume without loss of generality that $k_{\epsilon_j} \le N([g])-1$. What is relevant for us is that $\{k_{\epsilon_j}\}_{j=1}^\infty$ is uniformly bounded.

\begin{proposition}\label{EigenvalEstimate}
Let $\{u_\epsilon\}_{\epsilon>0}$ be the family of maximal functions provided by Proposition \ref{existence}. There exists a constant $C_2\not = C_2(\epsilon)$ such that
\begin{equation}\label{bound-eigenvalues}
|\lambda_2(u_\epsilon)|\le |\lambda_2(\D_g) - 1|.
\end{equation}
Moreover, if along some subsequence $\epsilon_j\to 0$ we have that $u_{\epsilon_j}\to u$ pointwise a.e. on $\Sigma$ and $u^{-1}(0)$ has zero boundary measure, then there exists a constant $C_1 \not = C_1(\epsilon_j)$ such that 
\begin{equation}\label{bound-eigenvalues2}
 \lambda_1(u_{\epsilon_j}) > -C_1.
\end{equation}
\end{proposition}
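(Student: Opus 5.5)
For the first estimate (\ref{bound-eigenvalues}) I would compare the maximal value of $F_{2,\epsilon}$ with its value at $u\equiv 1$. Since $u_\epsilon$ is maximal and normalized by (\ref{unitnorm}), and $\int_\Sigma 1\,d\sigma_g=\text{Vol}(\Sigma,d\sigma_g)=1$,
\[
\lambda_2(u_\epsilon)-\int_\Sigma u_\epsilon^{-\epsilon}\,d\sigma_g = F_{2,\epsilon}(u_\epsilon)\ge F_{2,\epsilon}(1)=\lambda_2(\D_g)-1 .
\]
The term $\int_\Sigma u_\epsilon^{-\epsilon}\,d\sigma_g$ is nonnegative, so $\lambda_2(u_\epsilon)\ge \lambda_2(\D_g)-1$. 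Because $u_\epsilon\in\D_\epsilon\subseteq L^N_{>0}(\Sigma)$, Proposition \ref{NegativeEigen} together with $N([g])\ge 2$ gives $\lambda_2(u_\epsilon)<0$. Also $\lambda_2(\D_g)-1<0$. Combining these, $0<|\lambda_2(u_\epsilon)|\le |\lambda_2(\D_g)-1|$, which is an $\epsilon$-independent bound. (This is the constant $C_2$.)

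For (\ref{bound-eigenvalues2}) I would follow the proof of Lemma \ref{RemainsBounded} essentially verbatim, with $u_j:=u_{\epsilon_j}$ and the pointwise limit $u$ in place of $u_\epsilon$.
\begin{enumerate}
\item Suppose, for contradiction, that $\lambda_1(u_j)\to-\infty$ along a subsequence.
\item Proposition \ref{ExistenceSimplicity} provides first eigenfunctions $f_j$ with $f_j>0$ a.e. on $\Sigma$ and $\int_\Sigma f_j^2u_j^{N-2}\,d\sigma_g=1$.
\item The energy identity (\ref{RB2}), together with $h_g<0$, forces $\|f_j\|_{L^2(\Sigma)}\to\infty$.
\item The rescaled functions $\tilde f_j=f_j/\|f_j\|_{L^2(\Sigma)}$ satisfy $\int_M|\nabla_g\tilde f_j|^2\,dv_g\le 2c_n(-h_g)$. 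By (\ref{FriedrichsInequality}) they are bounded in $W^{1,2}(M)$ and subconverge to some $f$ with $\int_\Sigma f^2\,d\sigma_g=1$, with strong convergence in $L^q(\Sigma)$ for $q<N$.
\item As in (\ref{RB9}), $\int_\Sigma\tilde f_j^2u_j^{N-2}\,d\sigma_g\to0$, and Cauchy--Schwarz then gives $\int_\Sigma \tilde f_j u_j^{N-2}\,d\sigma_g\to0$.
\end{enumerate}

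The only new ingredient is passing to the limit in $\int_\Sigma \tilde f_j u_j^{N-2}\,d\sigma_g$. The sequence $u_j^{N-2}$ is bounded in $L^{n-1}(\Sigma)$ because $\int_\Sigma u_j^N\,d\sigma_g=1$. Together with the pointwise a.e. convergence, this gives weak convergence $u_j^{N-2}\rightharpoonup u^{N-2}$ in $L^{n-1}(\Sigma)$: any weak limit of a subsequence must coincide with the a.e. limit, by Egorov's theorem or Vitali convergence. The dual exponent of $n-1$ is $\frac{n-1}{n-2}<N$ (using $n\ge3$), so $\tilde f_j\to f$ strongly in $L^{\frac{n-1}{n-2}}(\Sigma)$. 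Hence
\[
\int_\Sigma \tilde f_ju_j^{N-2}\,d\sigma_g\to\int_\Sigma f u^{N-2}\,d\sigma_g .
\]
It follows that $\int_\Sigma fu^{N-2}\,d\sigma_g=0$. Since $f\ge0$ and $u>0$ a.e. on $\Sigma$ by hypothesis, $f=0$ on $\Sigma$, which contradicts $\int_\Sigma f^2\,d\sigma_g=1$.

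I expect the main obstacle to be exactly this identification of the weak limit of $u_j^{N-2}$ with $u^{N-2}$. It is also the point where the hypothesis that $u^{-1}(0)$ has zero boundary measure is indispensable: without it, the limit $f$ could be supported on the zero set of $u$, and no contradiction would follow.
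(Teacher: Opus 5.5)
Your proof is correct and follows the paper's argument. Part one is the same comparison $F_{2,\epsilon}(u_\epsilon)\ge F_{2,\epsilon}(1)$, and part two is the same rescaling-and-contradiction argument built on $u>0$ a.e. on $\Sigma$. The one difference is that the paper passes to the limit by applying Fatou's lemma directly to $\tilde f_j^2u_{\epsilon_j}^{N-2}\to\tilde f^2u^{N-2}$ a.e. along a subsequence, which gives $\int_\Sigma\tilde f^2u^{N-2}\,d\sigma_g=0$ without having to identify the weak $L^{n-1}$-limit of $u_j^{N-2}$, the step you flagged as the main obstacle.
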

\begin{proof}
We start with the upper bound on $\{|\lambda_2(u_\epsilon)|\}_{\epsilon>0}$. For each $\epsilon>0$, we have $F_\epsilon(u_\epsilon)\ge F_\epsilon(1)$ since $u_\epsilon$ maximizes $F_\epsilon$ over $\D_\epsilon$. Therefore,
\begin{equation}
0> \lambda_2(u_\epsilon) \ge \lambda_2(u_\epsilon) - \int_\Sigma u_\epsilon^{-\epsilon}\;d\sigma_g \ge \lambda_2(\D_g) - 1.
\end{equation}
This gives the upper estimate on $|\lambda_2(u_\epsilon)|$. 

Let us now prove (\ref{bound-eigenvalues2}). Let $\{f_j\}_{j=1}^\infty$ be a sequence of associated first eigenfunctions given by Proposition \ref{ExistenceSimplicity}, and normalize the sequence such that $\int_\Sigma f_j^2u_{\epsilon_j}^{N-2}\;d\sigma_g = 1$. Then 
\begin{equation}\label{bound-eigenvalues3}
0>\lambda_1(u_{\epsilon_j}) = \int_M |\nabla_gf_j|^2\;dv_g + 2c_nh_g \int_\Sigma f_j^2\;d\sigma_g,
\end{equation}
and so
\begin{equation}\label{bound-eigenvalues4}
\int_M |\nabla_gf_j|^2\;dv_g < 2c_n (-h_g)\int_\Sigma f_j^2\;d\sigma_g.
\end{equation}
We claim that $\{f_j\}_{j=1}^\infty$ is bounded in $W^{1,2}(M)$. Notice that, by Friedrichs's inequality (\ref{FriedrichsInequality}), it is enough to show boundedness in $L^2(\Sigma)$. 

Let us assume, on the contrary, that $\|f_j\|_{L^2(\Sigma)}\to \infty$ and set $\tilde f_j = f_j \|f_j\|_{L^2(\Sigma)}^{-1}$. From rescaling (\ref{bound-eigenvalues3}) we now obtain $\int_M|\nabla_g\tilde f_j|^2\;dv_g < 2c_n(-h_g)$, and so $\{\tilde f_j\}_{j=1}^\infty$ is bounded in $W^{1,2}(M)$ thanks to (\ref{FriedrichsInequality}). Denote by $\tilde f$ its limiting function obtained by standard compactness arguments. Since $\int_\Sigma \tilde f_j^2\;d\sigma_g = 1$, we obtain $\int_\Sigma \tilde f^2\;d\sigma_g = 1$. However, Fatou's Lemma implies
\begin{equation}
0 \le \int_\Sigma \tilde f^2 u^{N-2}\;d\sigma_g \le \liminf_{j\to \infty} \int_\Sigma \tilde f_j^2 u_{\epsilon_j}^{N-2}\;d\sigma_g \to 0.
\end{equation}
Since $u^{-1}(0)$ has zero (boundary) measure, we conclude that $\tilde f = 0$ a.e. on $\Sigma$, contradicting that it has unit $L^2(\Sigma)$-norm. Hence, $\|f_j\|_{L^2(\Sigma)}$ remains bounded and so $\{f_j\}_{j=1}^\infty$ is bounded in $W^{1,2}(M)$. Using (\ref{bound-eigenvalues3}) and the Sobolev trace embedding, $\lambda_1(u_{\epsilon_j})$ remains bounded. This finishes the proof. 
\end{proof}

\begin{proposition}\label{bound-eigenfunctions}
Let $\{u_\epsilon\}_{\epsilon>0}$ be the family of maximal functions provided by Proposition \ref{existence}. Any sequence $\{\phi_\epsilon\}_{\epsilon>0}$ of generalized second eigenfunctions, normalized such that $\int_\Sigma \phi^2_\epsilon u^{N-2}_\epsilon\;d\sigma_g = 1$, is uniformly bounded in $W^{1,2}(M)\cap L^\infty(\Sigma)$. In particular, the associated family of second generalized eigenfunctions $\phi_{i,\epsilon}$ (arising from Proposition \ref{EulerEq}) is uniformly bounded in $W^{1,2}(M)\cap L^\infty(\Sigma)$: for each $i\in\{1,\cdots,k\}$, there is a $C\not=C(\epsilon)$, such that 
\begin{equation}
\|\phi_{i,\epsilon}\|_{W^{1,2}(M)} + \|\phi_{i,\epsilon}\|_{L^\infty(\Sigma)} \le C.
\end{equation}
\end{proposition}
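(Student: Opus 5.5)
The plan is to get a uniform $L^2(\Sigma)$ bound first, upgrade it to a $W^{1,2}(M)$ bound via Friedrichs, and then obtain the $L^\infty(\Sigma)$ bound from the Moser iteration of Proposition \ref{Estimates} as recorded in Remark \ref{RMoserBound}.

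Each $\phi_\epsilon$ satisfies
\[
\int_M|\nabla_g\phi_\epsilon|^2\,dv_g + 2c_nh_g\int_\Sigma\phi_\epsilon^2\,d\sigma_g=\lambda_2(u_\epsilon)<0,
\]
so $\int_M|\nabla_g\phi_\epsilon|^2\,dv_g\le 2c_n(-h_g)\int_\Sigma\phi_\epsilon^2\,d\sigma_g$. By Friedrichs's inequality (\ref{FriedrichsInequality}), it therefore suffices to bound $\|\phi_\epsilon\|_{L^2(\Sigma)}$ uniformly in $\epsilon$. This is essentially Proposition \ref{Boundedness-SeqConfFact} applied to the sequence $u_j=u_{\epsilon_j}$, which is normalized by $\int_\Sigma u_{\epsilon}^N\,d\sigma_g=1$. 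The eigenvalues satisfy $0>\lambda_2(u_\epsilon)\ge\lambda_2(\D_g)-1$ by Proposition \ref{EigenvalEstimate}, so after passing to a subsequence they converge to some $\lambda\le 0$.

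For completeness I will rerun the contradiction argument. Suppose $\|\phi_{\epsilon_j}\|_{L^2(\Sigma)}\to\infty$ and set $\tilde\phi_j=\phi_{\epsilon_j}/\|\phi_{\epsilon_j}\|_{L^2(\Sigma)}$. These functions are bounded in $W^{1,2}(M)$ and converge weakly to some $\tilde\phi$. In the weak formulation the right-hand side $\lambda_j\int_\Sigma\tilde\phi_j\psi u_j^{N-2}$ is bounded by
\[
C\|\psi\|_\infty\,\|\phi_{\epsilon_j}\|_{L^2(\Sigma)}^{-1}\Big(\int_\Sigma u_j^{N-2}\Big)^{1/2}\to 0,
\]
using the normalization $\int_\Sigma \phi_{\epsilon_j}^2u_j^{N-2}=1$. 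Hence $\tilde\phi$ is a weak solution of $\Delta_g\tilde\phi=0$, $B_g\tilde\phi=0$. Since $0\notin\mathrm{Spec}(\D_g)$, this forces $\tilde\phi|_\Sigma=0$. That contradicts $\|\tilde\phi\|_{L^2(\Sigma)}=1$, which follows from the compact trace embedding into $L^2(\Sigma)$. The point to check is that this argument needs neither a lower bound on $\lambda_1$ nor positivity of a limit $u$, only that $\lambda_j$ is bounded and that the $u_j$ have unit $L^N$ norm. Both of these hold here.

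For the $L^\infty(\Sigma)$ bound, I will rescale the boundary condition as $B_g\phi_\epsilon=-\phi_\epsilon\,(|\lambda_2(u_\epsilon)|^{1/(N-2)}u_\epsilon)^{N-2}$, so that each $\phi_\epsilon$ solves (\ref{SEigenReg1}) with a nonnegative weight. In the Moser iteration of Proposition \ref{Estimates}, the weight term is simply discarded because of its sign. The constants there depend only on $g$, $n$ and $k$, not on $u$. Remark \ref{RMoserBound} then converts the uniform $W^{1,2}(M)$ bound into uniform $L^\infty(M)$ and, via Proposition 2.4 of \cite{Marino}, uniform $L^\infty(\Sigma)$ bounds.

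The main obstacle is checking that the cited boundary estimate from \cite{Marino} depends only on $\|\phi\|_{W^{1,2}(M)}$, $\|\phi\|_{L^\infty(M)}$ and a one-sided (sign) condition on the boundary term, rather than on $\|u\|_\infty$. The favorable sign $-\phi u^{N-2}\cdot\phi\le 0$ should make this work, exactly as in the interior iteration. Since the $\phi_{i,\epsilon}$ from Proposition \ref{EulerEq} are normalized second eigenfunctions, the last claim follows as a special case.
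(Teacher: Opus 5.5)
Your proposal is correct and follows the paper's proof. The uniform $W^{1,2}(M)$ bound comes from Proposition \ref{Boundedness-SeqConfFact}, using $\int_\Sigma u_\epsilon^N\,d\sigma_g=1$ and the eigenvalue bound of Proposition \ref{EigenvalEstimate}, and the uniform $L^\infty(\Sigma)$ bound comes from Proposition \ref{Estimates} via Remarks \ref{GenEigenBounds} and \ref{RMoserBound}; the paper likewise takes the uniformity of the boundary estimate from \cite{Marino} directly from Remark \ref{RMoserBound}, just as you do.
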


\begin{proof}
By boundedness of $\{\phi_\epsilon\}_{\epsilon>0}$ in $W^{1,2}(M)\cap L^{\infty}(\Sigma)$ we mean that for any sequence $\epsilon_j\to 0^+$, the associated sequence $\{\phi_{\epsilon_j}\}_{j=1}^\infty$ is bounded in $W^{1,2}(M)\cap L^\infty(\Sigma)$. Since $\int_\Sigma u_{\epsilon_j}^N\;d\sigma_g = 1$ and because $\{\lambda_2(u_{\epsilon_j})\}_{j=1}^\infty$ is bounded by Proposition \ref{EigenvalEstimate}, the boundedness in $W^{1,2}(M)$ follows from Proposition \ref{Boundedness-SeqConfFact}. The uniform $L^\infty$-bounds are now a consequence of Proposition \ref{Estimates}; see also remarks \ref{GenEigenBounds} and \ref{RMoserBound}.
\end{proof}

We are now in a position to provide a positive lower bound for $\{|\lambda_2(u_\epsilon)|\}_{\epsilon}$.

\begin{proposition}
Let $\{u_\epsilon\}_{\epsilon>0}$ be the family of maximal functions provided by Proposition \ref{existence}. Then there exists a constant $C_2 \not = C_2(\epsilon)$ such that 
\begin{equation}
|\lambda_2(u_\epsilon)|\ge C_2 >0.
\end{equation}
\end{proposition}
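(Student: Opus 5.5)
The plan is to argue by contradiction, using the fact that $0\not\in\text{Spec}(\D_g)$ together with the uniform bounds of Proposition \ref{bound-eigenfunctions}. By Proposition \ref{NegativeEigen} each $\lambda_2(u_\epsilon)$ is negative. So suppose the claim fails: there is a sequence $\epsilon_j\to 0^+$ with $\lambda_2(u_{\epsilon_j})\to 0^-$. Write $u_j:=u_{\epsilon_j}$, normalized so that $\int_\Sigma u_j^N\,d\sigma_g=1$. For each $j$, let $\phi_j\in E_2(u_j)$ be normalized by $\int_\Sigma \phi_j^2u_j^{N-2}\,d\sigma_g=1$.

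By Proposition \ref{bound-eigenfunctions}, the sequence $\{\phi_j\}$ is uniformly bounded in $W^{1,2}(M)\cap L^\infty(\Sigma)$. Passing to a subsequence, we get
\begin{equation*}
\phi_j\rightharpoonup\phi \text{ in } W^{1,2}(M),\qquad \phi_j\to\phi \text{ in } L^2(M)\text{ and in } L^q(\Sigma)\text{ for } 1<q<N .
\end{equation*}
For any test function $\psi\in C^\infty(\overline M)$, the weak equation reads
\begin{equation*}
\int_M\la\nabla_g\phi_j,\nabla_g\psi\ra\,dv_g+2c_n\int_\Sigma h_g\phi_j\psi\,d\sigma_g=\lambda_2(u_j)\int_\Sigma\phi_j\psi u_j^{N-2}\,d\sigma_g .
\end{equation*}
The right-hand side tends to zero. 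To see this, apply the Cauchy--Schwarz/H\"older estimate from (\ref{Boundedness-SeqConfFact3}): it shows that $\int_\Sigma|\phi_j|u_j^{N-2}\,d\sigma_g\le 1$, and $\lambda_2(u_j)\to0$. Passing to the limit, $\phi$ is a weak solution of $\Delta_g\phi=0$ in $M$ and $B_g(\phi)=0$ on $\Sigma$. Since $0\not\in\text{Spec}(\D_g)$, this forces $\phi|_\Sigma=0$, exactly as in the proof of Proposition \ref{Boundedness-SeqConfFact}. Hence $\phi_j\to0$ in $L^2(\Sigma)$.

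Next we upgrade this convergence to the critical exponent. The $L^2(\Sigma)$ convergence combined with the uniform $L^\infty(\Sigma)$ bound gives, by interpolation,
\begin{equation*}
\|\phi_j\|_{L^N(\Sigma)}^N\le\|\phi_j\|_{L^\infty(\Sigma)}^{N-2}\|\phi_j\|_{L^2(\Sigma)}^2\to0 .
\end{equation*}
Now apply H\"older's inequality with exponents $\frac{n-1}{n-2}$ and $n-1$, recalling that $u_j^{N-2}=u_j^{\frac{2}{n-2}}$ has unit $L^{n-1}(\Sigma)$-norm:
\begin{equation*}
1=\int_\Sigma\phi_j^2u_j^{N-2}\,d\sigma_g\le\|\phi_j\|_{L^N(\Sigma)}^2\Big(\int_\Sigma u_j^N\,d\sigma_g\Big)^{\frac{N-2}{N}}=\|\phi_j\|_{L^N(\Sigma)}^2\to0 .
\end{equation*}
This is a contradiction, so $\lambda_2(u_\epsilon)$ is bounded away from zero uniformly in $\epsilon$.

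The main obstacle is the loss of compactness at the critical trace exponent $N$. Strong convergence is only available in $L^q(\Sigma)$ for $q<N$, which by itself does not control the weighted normalization. The uniform $L^\infty(\Sigma)$ bound from Proposition \ref{bound-eigenfunctions} (via the Moser iteration of Proposition \ref{Estimates}) is precisely what closes this gap. One point needs checking: that bound must be uniform even as $\lambda_2(u_j)$ varies. This holds because the relevant weight is $|\lambda_2(u_j)|\,u_j^{N-2}$, and the Moser iteration in Proposition \ref{Estimates} only used the sign of the potential, never its size. The $W^{1,2}$ bound should also be checked at this point: Proposition \ref{Boundedness-SeqConfFact} requires only that $\lambda_j$ be negative and bounded, and it is unaffected when the limit $\lambda$ is $0$.
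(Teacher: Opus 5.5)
Your proof is correct and follows the paper's argument. Both argue by contradiction, use the uniform $W^{1,2}(M)\cap L^\infty(\Sigma)$ bounds of Proposition \ref{bound-eigenfunctions}, deduce that the limit solves the zero-eigenvalue problem so $\phi|_\Sigma=0$ because $0\not\in\text{Spec}(\D_g)$, and reach a contradiction with the weighted normalization via the $L^\infty(\Sigma)$ bound. The only cosmetic difference is that the paper passes to a weak $L^{n-1}(\Sigma)$ limit of $u_{\epsilon_j}^{N-2}$ and shows $\int_\Sigma \phi^2u^{N-2}\,d\sigma_g=1$, whereas you skip the limiting weight and interpolate to get $\|\phi_j\|_{L^N(\Sigma)}\to 0$ before applying H\"older directly.
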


\begin{proof}
Let us suppose, on the contrary, that we have a subsequence of $\{u_{\epsilon_j}\}_{j=1}^\infty$, still denoted by $\{u_{\epsilon_j}\}_{j=1}^\infty$, such that $0>\lambda_2(u_{\epsilon_j})\to 0$, where $\epsilon_j\to 0^+$ as $j\to \infty$. Associated to it, there is a sequence of second generalized eigenfunctions $\{\phi_{\epsilon_j}\}_{j=1}^\infty$, which we normalize to have unit $L^2(\Sigma, u_{\epsilon_j}^{N-2}d\sigma_g)$-norm, such that
\begin{equation}\label{EigenvalEstimate1}
\lambda_2(u_{\epsilon_j}) = \R_g^{u_{\epsilon_j}}(\phi_{\epsilon_j}) = \int_M |\nabla_g\phi_{\epsilon_j}|^2\;dv_g + 2c_n\int_\Sigma h_g\phi_{\epsilon_j}^2\;d\sigma_g \to 0.
\end{equation}
Thanks to Proposition \ref{bound-eigenfunctions}, $\{\phi_{\epsilon_j}\}_{j=1}^\infty$ is uniformly bounded in $W^{1,2}(M)\cap L^\infty(\Sigma)$.

Let $\phi\in W^{1,2}(M)$ be the limiting function. After using the eigenvalue equation and taking limits appropriately, we obtain that $\phi_{|_\Sigma}$ is an eigenfunction of $\D_g$ with associated eigenvalue $0$. Our assumptions on $\text{Spec}(\D_g)$ then imply that $\phi_{|_\Sigma} = 0$. On the other hand, since $\{v_j\}_{j=1}^\infty:= \{u_{\epsilon_j}^{N-2}\}_{j=1}^\infty$ is bounded in $L^{n-1}(\Sigma)$, it converges weakly in $L^{n-1}(\Sigma)$. Denote the limiting function by $v$, and set $u:=v^{\frac{1}{N-2}}$. Then

\begin{equation}
\begin{split}
\left|\int_\Sigma \phi^2u^{N-2}\;d\sigma_g - 1\right| &= \left|\int_\Sigma \phi^2u^{N-2}\;d\sigma_g - \int_\Sigma \phi^2_{\epsilon_j}u^{N-2}_{\epsilon_j}\;d\sigma_g\right| \\ & \le \int_\Sigma |\phi^2 - \phi_{\epsilon_j}^2|u_{\epsilon_j}^{N-2}\;d\sigma_g \\ & \hspace{.15in}+\left|\int_\Sigma \phi^2 (v - v_{j})\;d\sigma_g\right|.
\end{split}
\end{equation}
Notice that, thanks to the $L^\infty(\Sigma)$-estimates, $|\phi^2 - \phi^2_{\epsilon_j}| \le C |\phi - \phi_{\epsilon_j}|$, thus the first term goes to zero by H\"older's inequality and strong convergence of $\{\phi_{\epsilon_j}\}$ in $L^{\frac{N}{2}}(\Sigma)$. The second term goes to zero by weak convergence. Hence, $\int_\Sigma \phi^2u^{N-2}d\sigma_g = 1$, which contradicts $\phi_{|_{\Sigma}} = 0$. This finishes the proof.  
\end{proof}

\begin{corollary}\label{GammaBound}
Let $\gamma_{1,\epsilon}$ and $\gamma_{2,\epsilon}$ be the sequence provided by Proposition \ref{EulerEq} (see also Proposition \ref{existence}). Then
\begin{equation}
1\le \gamma_{1,\epsilon} \le 1+C\epsilon,
\end{equation}
and
\begin{equation}
C^{-1}\epsilon\le \gamma_{2,\epsilon}\le C\epsilon,
\end{equation}
where $C$ is a generic constant independent of $\epsilon$.
\end{corollary}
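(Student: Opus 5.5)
The plan is to read both bounds directly off the definitions of $\gamma_{1,\epsilon}$ and $\gamma_{2,\epsilon}$ in Proposition \ref{FirstVarFor}, evaluated at $u=u_\epsilon$, and to control the two quantities that appear there: $\lambda_2(u_\epsilon)$ and $\int_\Sigma u_\epsilon^{-\epsilon}\,d\sigma_g$. For the first quantity we have two-sided control on $|\lambda_2(u_\epsilon)|$, uniformly in $\epsilon$. Proposition \ref{EigenvalEstimate} gives the upper bound $|\lambda_2(u_\epsilon)|\le |\lambda_2(\D_g)-1|$. The proposition immediately preceding the corollary gives the lower bound $|\lambda_2(u_\epsilon)|\ge C_2>0$. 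For the second quantity, (\ref{existence1}) in Proposition \ref{existence} gives $\int_\Sigma u_\epsilon^{-\epsilon}\,d\sigma_g\le C(g)$, again independently of $\epsilon$. We also record that $\lambda_2(u_\epsilon)<0$, by Proposition \ref{NegativeEigen} and the assumption $N([g])\ge 2$, since $u_\epsilon\in\D_\epsilon\subseteq L^N_{>0}(\Sigma)$.

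For $\gamma_{2,\epsilon}=(N-2)^{-1}\epsilon/|\lambda_2(u_\epsilon)|$, substituting the two bounds on $|\lambda_2(u_\epsilon)|$ yields
\[
\frac{\epsilon}{(N-2)|\lambda_2(\D_g)-1|}\le\gamma_{2,\epsilon}\le \frac{\epsilon}{(N-2)C_2},
\]
which is the claimed estimate $C^{-1}\epsilon\le\gamma_{2,\epsilon}\le C\epsilon$.

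For $\gamma_{1,\epsilon}$, since $\lambda_2(u_\epsilon)<0$ we can rewrite
\[
\gamma_{1,\epsilon}=1+\frac{\epsilon}{(N-2)|\lambda_2(u_\epsilon)|}\int_\Sigma u_\epsilon^{-\epsilon}\,d\sigma_g .
\]
The added term is nonnegative, which gives $\gamma_{1,\epsilon}\ge 1$. Bounding it above by $\epsilon\, C(g)/((N-2)C_2)$ gives $\gamma_{1,\epsilon}\le 1+C\epsilon$.

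No step here is a real obstacle, since all the substance is contained in the earlier uniform estimates. The one point to check is that the constant in (\ref{existence1}) does not depend on $\epsilon$. Its proof compares with the test function $u\equiv1$, using $\int_\Sigma 1\,d\sigma_g=1$, and so does not depend on $\epsilon$. The lower bound on $|\lambda_2|$ is likewise already established above.
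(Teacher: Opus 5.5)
Your proposal is correct and is essentially the paper's argument; the paper states the corollary without proof, as an immediate consequence of substituting the two-sided bound $C_2\le|\lambda_2(u_\epsilon)|\le|\lambda_2(\D_g)-1|$ and the estimate (\ref{existence1}) into the definitions of $\gamma_{1,\epsilon}$ and $\gamma_{2,\epsilon}$, with $\lambda_2(u_\epsilon)<0$ making the correction term in $\gamma_{1,\epsilon}$ nonnegative. Your check that the constant in (\ref{existence1}) does not depend on $\epsilon$ is also right, and it can be read off directly from $F_{2,\epsilon}(u_\epsilon)\ge F_{2,\epsilon}(1)$, which gives $\int_\Sigma u_\epsilon^{-\epsilon}\,d\sigma_g\le 1-\lambda_2(\D_g)$.
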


\begin{proposition}\label{u-Linfinity}
There is a constant $C$, independent of $\epsilon$, such that
\begin{equation}
\|u_\epsilon\|_{L^\infty(\Sigma)}\le C.
\end{equation}
\end{proposition}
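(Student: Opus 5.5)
The bound comes straight from the Euler--Lagrange equation (\ref{EulerEq2}) of Proposition \ref{EulerEq}, combined with the uniform $L^\infty(\Sigma)$ bounds on the second eigenfunctions from Proposition \ref{bound-eigenfunctions}. Pointwise a.e. on $\Sigma$,
\begin{equation*}
\gamma_{1,\epsilon}u_\epsilon^N = u_\epsilon^{N-2}\sum_{i=1}^{k}c_{i,\epsilon}\phi_{i,\epsilon}^2 + \gamma_{2,\epsilon}u_\epsilon^{-\epsilon}.
\end{equation*}
The idea is to solve this algebraically for $u_\epsilon$ at points where $u_\epsilon$ is large.

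\textbf{Step 1.} Proposition \ref{bound-eigenfunctions} gives $\|\phi_{i,\epsilon}\|_{L^\infty(\Sigma)}\le C$ uniformly in $\epsilon$. Since $c_{i,\epsilon}\ge 0$ and $\sum_i c_{i,\epsilon}=1$, it follows that
\begin{equation*}
\Phi_\epsilon:=\sum_i c_{i,\epsilon}\phi_{i,\epsilon}^2\le C^2
\end{equation*}
a.e. on $\Sigma$, independently of $\epsilon$.

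\textbf{Step 2.} By Corollary \ref{GammaBound}, $\gamma_{1,\epsilon}\ge 1$ and $\gamma_{2,\epsilon}\le C\epsilon\le C$ for $\epsilon\le 1$. Fix a point where $u_\epsilon\ge 1$. Then $u_\epsilon^{-\epsilon}\le 1\le u_\epsilon^{N-2}$, so
\begin{equation*}
u_\epsilon^N\le \gamma_{1,\epsilon}u_\epsilon^N\le u_\epsilon^{N-2}\bigl(\Phi_\epsilon+\gamma_{2,\epsilon}\bigr)\le u_\epsilon^{N-2}(C^2+C).
\end{equation*}
Dividing by $u_\epsilon^{N-2}>0$ gives $u_\epsilon^2\le C^2+C$. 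Hence a.e. on $\Sigma$ we have $u_\epsilon\le \max\{1,(C^2+C)^{1/2}\}$, a bound independent of $\epsilon$.

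\textbf{Main obstacle.} The only real issue is that the $L^\infty(\Sigma)$ bound on the eigenfunctions must be uniform in $\epsilon$ even though $u_\epsilon^{N-2}$ is a priori only in $L^{n-1}(\Sigma)$. This is exactly the content of Proposition \ref{Estimates} and Remark \ref{RMoserBound}: the Moser iteration there uses only the sign of $\lambda_2(u_\epsilon)<0$ and $h_g<0$, and never the size of $u$. It is combined with the uniform $W^{1,2}$ bounds from Proposition \ref{Boundedness-SeqConfFact}, which needs $\int_\Sigma u_\epsilon^N\,d\sigma_g=1$ and the bound (\ref{bound-eigenvalues}) on $\lambda_2(u_\epsilon)$.

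One subtlety must be checked: Remark \ref{RMoserBound} is stated for the normalized equation $B_g(\phi)=-\phi u^{N-2}$. To apply it here, I absorb $|\lambda_2(u_\epsilon)|$ into the potential. Since $|\lambda_2(u_\epsilon)|$ lies in $[C_2,\,|\lambda_2(\D_g)-1|]$, this rescaling changes the constants only boundedly. In any case the Moser argument in Proposition \ref{Estimates} discards the $u$-term entirely by its sign, so the final constant depends only on $g$ and on the $W^{1,2}$ bound.
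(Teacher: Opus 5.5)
Your proof is correct and is essentially the paper's argument: at points where $u_\epsilon\ge 1$ you combine $\gamma_{1,\epsilon}\ge 1$, $u_\epsilon^{-\epsilon}\le 1$, $\gamma_{2,\epsilon}\le C\epsilon$ and the uniform $L^\infty(\Sigma)$ bounds from Proposition \ref{bound-eigenfunctions} in the Euler--Lagrange equation (\ref{EulerEq2}) to obtain $u_\epsilon^2\le C$. The only cosmetic difference is that you bound $\gamma_{2,\epsilon}$ by $\gamma_{2,\epsilon}u_\epsilon^{N-2}$ before dividing, whereas the paper writes the bound as $C(u_\epsilon^{N-2}+\epsilon)$.
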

\begin{proof}
By Proposition \ref{EulerEq}, we know that 
\begin{equation}
\gamma_{1,\epsilon}u_\epsilon^N = u_\epsilon^{N-2}\sum_{i=1}^kc_{i,\epsilon}\phi_{i,\epsilon}^2 + \gamma_{2,\epsilon}u_\epsilon^{-\epsilon}.
\end{equation}
For any point $x\in \Sigma$ at which $u_\epsilon(x)\ge 1$, we obtain
\begin{equation}
\begin{split}
u_\epsilon(x)^N &\le \gamma_{1,\epsilon}u_{\epsilon}(x)^N =   u_\epsilon(x)^{N-2}\sum_{i=1}^k c_{i,\epsilon}\phi_{i,\epsilon}(x)^2 +\gamma_{2,\epsilon}u_\epsilon(x)^{-\epsilon} \\ &\le u_\epsilon(x)^{N-2}\sum_{i=1}^kc_{i,\epsilon}\phi_{i,\epsilon}(x)^2 + \gamma_{2,\epsilon} \le C(u_\epsilon(x)^{N-2}+\epsilon), 
\end{split}
\end{equation}
where the last inequality follows from Propositions \ref{GammaBound} and \ref{bound-eigenfunctions}. This implies $u_\epsilon(x)^2\le C$, and thus the proof is completed. 
\end{proof}

\begin{proposition}\label{C0W1p-bounds}
For any $\alpha\in (0,1)$ and $p\ge2$, there is a constant $C=C(\alpha,p,g)$ such that 
\begin{equation}
\|\phi_{i,\epsilon}\|_{C^{0,\alpha}(\Sigma)} + \|\phi_{i,\epsilon}\|_{W^{1,p}(\Sigma)}\le C
\end{equation}
for each $i$.
\end{proposition}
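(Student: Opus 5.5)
We view each $\phi_{i,\epsilon}$ as a harmonic function in $M$ with a Robin-type boundary condition whose right-hand side is now uniformly bounded, and then apply standard elliptic boundary regularity. Since $g$ is scalar flat, each $\phi=\phi_{i,\epsilon}$ is a weak solution of
\[
\Delta_g\phi=0 \ \text{in } M,\qquad \partial_{\nu_g}\phi = f_\epsilon:=\bigl(\lambda_2(u_\epsilon)u_\epsilon^{N-2}-2c_nh_g\bigr)\phi \ \text{on }\Sigma .
\]
The first step is to show that $f_\epsilon$ is bounded in $L^\infty(\Sigma)$ independently of $\epsilon$. Proposition \ref{EigenvalEstimate} gives $|\lambda_2(u_\epsilon)|\le C$, and Proposition \ref{u-Linfinity} gives $\|u_\epsilon\|_{L^\infty(\Sigma)}\le C$. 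Proposition \ref{bound-eigenfunctions} gives $\|\phi\|_{L^\infty(\Sigma)}+\|\phi\|_{W^{1,2}(M)}\le C$. Finally, $h_g$ is a constant. Together these yield $\|f_\epsilon\|_{L^\infty(\Sigma)}\le C$. Moreover, $\phi$ is bounded in $L^\infty(M)$ by Remark \ref{RMoserBound}.

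Next, we apply $L^p$ theory for the Neumann problem. For every $p<\infty$, the Neumann problem $\Delta_g w=0$, $\partial_\nu w=f\in L^p(\Sigma)$ has the estimate
\[
\|w\|_{W^{1+1/p,p}(M)}\le C(\|f\|_{L^p(\Sigma)}+\|w\|_{L^2(M)}).
\]
More directly, the Neumann-to-Dirichlet map is a pseudodifferential operator of order $-1$, so it maps $L^p(\Sigma)$ to $W^{1,p}(\Sigma)$. We will use the latter formulation to avoid trace subtleties. Concretely, we write $\phi|_\Sigma = \mathcal N(f_\epsilon) + $ (a projection onto constants), where $\mathcal N$ is the Neumann-to-Dirichlet operator for $\Delta_g$. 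This operator is bounded $L^p(\Sigma)\to W^{1,p}(\Sigma)$ for $1<p<\infty$ by Calder\'on–Zygmund theory for classical pseudodifferential operators of order $-1$ on the closed manifold $\Sigma$. The constant component is controlled by $\|\phi\|_{L^\infty(\Sigma)}$, so we obtain
\[
\|\phi\|_{W^{1,p}(\Sigma)}\le C(p,g)\bigl(\|f_\epsilon\|_{L^p(\Sigma)}+\|\phi\|_{L^2(\Sigma)}\bigr)\le C(p,g).
\]
One must check that this weak solution coincides with $\mathcal N(f_\epsilon)$ up to a constant. This follows from uniqueness of $W^{1,2}$ Neumann solutions modulo constants.

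Finally, the H\"older bound follows from Morrey's embedding on the $(n-1)$-dimensional closed manifold $\Sigma$. Given $\alpha\in(0,1)$, we choose $p>(n-1)/(1-\alpha)$, so that $W^{1,p}(\Sigma)\hookrightarrow C^{0,\alpha}(\Sigma)$ with a constant depending only on $\alpha$, $p$ and $g$. The bounds for smaller $p\ge2$ follow from those for large $p$ by H\"older's inequality, since $\Sigma$ is compact.

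The main obstacle is the regularity step. It requires a clean, citable $L^p\to W^{1,p}(\Sigma)$ estimate for the Neumann problem with only $L^\infty$ boundary data (not H\"older data), and the identification of the weak $W^{1,2}$ solution with the pseudodifferential representation. If the pseudodifferential route proves awkward, the alternative is to use interior-to-boundary $W^{2,p}$-type estimates for harmonic functions with $W^{1-1/p,p}$ Neumann data. These would be obtained after first bootstrapping $f_\epsilon$: the $C^{0,\alpha}$ bound for $\phi$ alone does not improve $f_\epsilon$, since $u_\epsilon$ is only bounded. For that reason I would stay with $L^p$ data and the order $-1$ Neumann-to-Dirichlet map.
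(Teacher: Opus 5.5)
Your argument is correct and is essentially the paper's. The paper applies $L^p$ elliptic regularity for the order-one elliptic pseudodifferential operator $\mathcal{D}_g$ to $\mathcal{D}_g\phi_{i,\epsilon}=\lambda_2(u_\epsilon)u_\epsilon^{N-2}\phi_{i,\epsilon}$, using the uniform bounds on $\lambda_2(u_\epsilon)$, $\|u_\epsilon\|_{L^\infty(\Sigma)}$ and $\|\phi_{i,\epsilon}\|_{L^\infty(\Sigma)}$, and then Morrey's embedding with $q>\frac{n-1}{1-\alpha}$. Your version only moves the constant Robin term $2c_nh_g\phi$ to the right-hand side and inverts the Laplacian's Dirichlet-to-Neumann map through the order $-1$ Neumann-to-Dirichlet operator, handling the constant kernel separately, which is the same regularity step in slightly different packaging.
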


\begin{proof}
Since $\mathcal{D}_g$ is an elliptic pseudodifferential operator of order $1$, elliptic regularity \cite{TaylorPDO} implies
\begin{equation}
\begin{split}
\|\phi_{i,\epsilon}\|_{W^{1,p}(\Sigma)} &\le C(\|\lambda_2(u_\epsilon)\phi_{i,\epsilon}u_\epsilon^{N-2}\|_{L^p(\Sigma)} + \|\phi_{i,\epsilon}\|_{L^p(\Sigma)}) \le \tilde C,
\end{split}
\end{equation}
where the last inequality follows from Propositions \ref{EigenvalEstimate}, \ref{bound-eigenfunctions} and \ref{u-Linfinity}, and $\tilde C$ is a generic constant independent of $\epsilon$. The preceding estimate holds for every finite $p \ge 2$. Given
$\alpha \in (0,1)$, choose $q>\frac{n-1}{1-\alpha}$. Morrey's embedding $W^{1,q}(\Sigma)\hookrightarrow C^{0,\alpha}(\Sigma)$ then gives the required uniform Hölder estimate.
\end{proof}

The following $\epsilon$-dependent lower bound can be obtained for the sequence of extremals $\{u_\epsilon\}_{\epsilon>0}$.

\begin{proposition}\label{u-lowerbound}
There is a constant $C$ such that 
\begin{equation}
\essinf u_\epsilon \ge (C\epsilon)^\frac{1}{\epsilon+N}.
\end{equation}
\end{proposition}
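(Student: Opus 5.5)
The lower bound comes straight from the Euler--Lagrange identity (\ref{EulerEq2}) of Proposition \ref{EulerEq}, together with the uniform bound on $u_\epsilon$ from Proposition \ref{u-Linfinity} and the two-sided control of $\gamma_{1,\epsilon},\gamma_{2,\epsilon}$ in Corollary \ref{GammaBound}.

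Rearranging (\ref{EulerEq2}) gives, a.e. on $\Sigma$,
\begin{equation*}
\gamma_{2,\epsilon}u_\epsilon^{-\epsilon} = \gamma_{1,\epsilon}u_\epsilon^N - u_\epsilon^{N-2}\sum_{i=1}^{k}c_{i,\epsilon}\phi_{i,\epsilon}^2 \le \gamma_{1,\epsilon}u_\epsilon^N .
\end{equation*}
Here we drop the second term on the right because $c_{i,\epsilon}\ge 0$ and $u_\epsilon\ge 0$. Since $u_\epsilon\in\D_\epsilon$, we have $u_\epsilon>0$ a.e., so we may multiply both sides by $u_\epsilon^{\epsilon}$. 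This yields
\begin{equation*}
u_\epsilon^{N+\epsilon}\ge \frac{\gamma_{2,\epsilon}}{\gamma_{1,\epsilon}} \quad\text{a.e. on }\Sigma .
\end{equation*}

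Corollary \ref{GammaBound} provides $\gamma_{2,\epsilon}\ge C^{-1}\epsilon$ and $\gamma_{1,\epsilon}\le 1+C\epsilon\le 1+C$ for, say, $\epsilon\le 1$. Consequently $\gamma_{2,\epsilon}/\gamma_{1,\epsilon}\ge C'\epsilon$ with $C'$ independent of $\epsilon$. For $\epsilon>1$ the bound $\gamma_{1,\epsilon}\le 1+C\epsilon$ still gives a quotient of order $\epsilon/(1+C\epsilon)$; this is bounded below by a constant, which covers that range after possibly shrinking $C'$. We only care about $\epsilon\to0$ in any case.

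Taking the $(N+\epsilon)$-th root then gives $\essinf u_\epsilon\ge (C'\epsilon)^{1/(\epsilon+N)}$, which is the claim.

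There is no real obstacle. The only points needing care are that $u_\epsilon>0$ almost everywhere, so the multiplication by $u_\epsilon^\epsilon$ is legitimate, and the sign of the dropped eigenfunction term. The upper bound on $\gamma_{1,\epsilon}$ uses the lower bound $|\lambda_2(u_\epsilon)|\ge C_2$ and the bound (\ref{existence1}), both already established. Proposition \ref{u-Linfinity} is not even needed here.
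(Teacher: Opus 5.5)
Your proof is correct and is essentially the paper's argument: drop the nonnegative eigenfunction term in (\ref{EulerEq2}) to get $\gamma_{1,\epsilon}u_\epsilon^{N+\epsilon}\ge\gamma_{2,\epsilon}$ wherever $u_\epsilon>0$ (a.e.), then apply Corollary \ref{GammaBound}. Your aside about $\epsilon>1$ is not quite right, since $\epsilon/(1+C\epsilon)$ is not bounded below by $C'\epsilon$ for all large $\epsilon$; this is harmless, because the statement is only meant (and only true, given $\int_\Sigma u_\epsilon^N\,d\sigma_g=\mathrm{Vol}(\Sigma)=1$) for small $\epsilon$.
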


\begin{proof}
Pick a point $x\in \Sigma$ such that $u_\epsilon(x)>0$. From (\ref{EulerEq2}), at $x$ we have
\begin{equation}
\begin{split}
\gamma_{1,\epsilon} u_\epsilon^N(x) &= u_\epsilon^{N-2}(x) \sum_{i=1}^k c_{i,\epsilon}\phi_{i,\epsilon}^2(x) + \gamma_{2,\epsilon}u_\epsilon^{-\epsilon}(x) \ge \gamma_{2,\epsilon} u_\epsilon^{-\epsilon}(x),
\end{split}
\end{equation}
from which we obtain
\begin{equation}
\gamma_{1,\epsilon} u_\epsilon^{N+\epsilon}(x) \ge \gamma_{2,\epsilon}.
\end{equation}
The result follows from Corollary \ref{GammaBound}.
\end{proof}

Our next goal is to prove uniform estimates for the family $\{u_\epsilon\}_{\epsilon>0}$ of maximal generalized conformal factors in $C^{0,\mu}(\Sigma)$, for some $\mu\in(0,1)$. To this end, let us  rearrange (\ref{EulerEq2}) as follows:
\begin{equation}\label{EulerEq3}
\gamma_{1,\epsilon} u_\epsilon^2 - \gamma_{2,\epsilon}u_\epsilon^{-\epsilon - N + 2}= \sum_{i=1}^k c_{i,\epsilon}\phi_{i,\epsilon}^2.
\end{equation}
If we define $f_\epsilon:\mathbb{R}^+ \to \mathbb{R}$ by
\begin{equation}
f_\epsilon(t) = \gamma_{1,\epsilon}t^2 - \gamma_{2,\epsilon} t^{-\epsilon-N+2},
\end{equation}
and $\Phi_\epsilon: \Sigma\to \mathbb{R}$ by 
\begin{equation}\label{PhiDef}
\Phi_\epsilon(x) = \sum_{i=1}^kc_{i,\epsilon}\phi_{i,\epsilon}^2(x),
\end{equation}
then (\ref{EulerEq3}) reads as $f_\epsilon(u_\epsilon) = \Phi_\epsilon$.

The function $f_\epsilon$ is strictly increasing on $\mathbb{R}^+$. Indeed, direct computation shows
\begin{equation}
f_\epsilon'(t) = 2\gamma_{1,\epsilon}t + \gamma_{2,\epsilon}(\epsilon + N -2) t^{-\epsilon - N +1}\ge \min_{\mathbb{R}^+} f_\epsilon' >0. 
\end{equation}
Therefore, its inverse $f_\epsilon^{-1}: \mathbb{R} \to \mathbb{R}^+$ exists and it is also strictly increasing. From the equality $f_\epsilon(u_\epsilon) = \Phi_\epsilon$, we then deduce $u_\epsilon = f_\epsilon^{-1} \circ \Phi_\epsilon$. Using
\begin{equation}
(f_\epsilon^{-1})'(t) = \frac{1}{f_\epsilon'(f_\epsilon^{-1}(t))} \le \frac{1}{\min_{\mathbb{R}^+} f_\epsilon'},
\end{equation}
we observe that $(f_\epsilon^{-1})'(t)$ remains bounded, thus $f_\epsilon^{-1}$ is Lipschitz for each fixed $\epsilon>0$. As a consequence, we have the desired result:

\begin{proposition}\label{u-HolderBound}
The family $\{u_\epsilon\}_{\epsilon>0}$ is uniformly bounded in  both $W^{1,p}(\Sigma)$ and  $C^{0,\mu}(\Sigma)$, for $2 \le p < \infty$ and for some $\mu\in (0,1)$.
\end{proposition}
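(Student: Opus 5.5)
The plan is to avoid the $\epsilon$-dependent Lipschitz constant of $f_\epsilon^{-1}$, which degenerates as $\epsilon\to0$ because $\min_{\mathbb{R}^+}f_\epsilon'\to0$. That constant will only be used qualitatively, to justify a chain rule. The uniform estimate itself will come from a pointwise gradient bound obtained by differentiating the Euler--Lagrange relation $f_\epsilon(u_\epsilon)=\Phi_\epsilon$ from (\ref{EulerEq3}).

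\emph{Step 1 (regularity for fixed $\epsilon$).} By Proposition \ref{C0W1p-bounds}, each $\phi_{i,\epsilon}\in W^{1,p}(\Sigma)\cap L^\infty(\Sigma)$. Hence $\Phi_\epsilon=\sum_i c_{i,\epsilon}\phi_{i,\epsilon}^2\in W^{1,p}(\Sigma)$, with
\[
\nabla\Phi_\epsilon=2\sum_i c_{i,\epsilon}\phi_{i,\epsilon}\nabla\phi_{i,\epsilon}.
\]
For fixed $\epsilon$, $f_\epsilon^{-1}$ is $C^1$ and globally Lipschitz, as shown just before the statement. The chain rule for Sobolev functions therefore gives $u_\epsilon=f_\epsilon^{-1}\circ\Phi_\epsilon\in W^{1,p}(\Sigma)$, and a.e. on $\Sigma$
\[
f_\epsilon'(u_\epsilon)\nabla u_\epsilon=\nabla\Phi_\epsilon .
\]
Here $u_\epsilon>0$ a.e. by Proposition \ref{u-lowerbound}.

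\emph{Step 2 (uniform pointwise gradient bound).} I will use the crude lower bound $f_\epsilon'(t)\ge 2\gamma_{1,\epsilon}t\ge 2t$, which holds since $\gamma_{1,\epsilon}\ge1$ and the second term of $f_\epsilon'$ is nonnegative. This gives
\[
|\nabla u_\epsilon|\le \frac{|\nabla\Phi_\epsilon|}{2u_\epsilon}.
\]
By Cauchy--Schwarz with weights $c_{i,\epsilon}$,
\[
|\nabla\Phi_\epsilon|\le 2\Phi_\epsilon^{1/2}\Big(\sum_i c_{i,\epsilon}|\nabla\phi_{i,\epsilon}|^2\Big)^{1/2}.
\]
From (\ref{EulerEq3}) we also have $\Phi_\epsilon=\gamma_{1,\epsilon}u_\epsilon^2-\gamma_{2,\epsilon}u_\epsilon^{-\epsilon-N+2}\le\gamma_{1,\epsilon}u_\epsilon^2\le(1+C\epsilon)u_\epsilon^2$, using Corollary \ref{GammaBound}. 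The factor $u_\epsilon$ then cancels, and we obtain
\[
|\nabla u_\epsilon|\le C\Big(\sum_i c_{i,\epsilon}|\nabla\phi_{i,\epsilon}|^2\Big)^{1/2}\le C\sum_i|\nabla\phi_{i,\epsilon}|
\quad\text{a.e. on }\Sigma,
\]
using $c_{i,\epsilon}\le1$.

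\emph{Step 3 (conclusion).} Integrate the bound from Step 2 in $L^p(\Sigma)$. The number $k$ of eigenfunctions is uniformly bounded, by (\ref{MultiplicityBound}). Proposition \ref{C0W1p-bounds} bounds each $\|\nabla\phi_{i,\epsilon}\|_{L^p(\Sigma)}$ independently of $\epsilon$. Proposition \ref{u-Linfinity} bounds $\|u_\epsilon\|_{L^p(\Sigma)}\le C\,\mathrm{Vol}(\Sigma)^{1/p}$. Together these give a uniform bound on $\|u_\epsilon\|_{W^{1,p}(\Sigma)}$ for every finite $p\ge2$. Fixing any $p>n-1=\dim\Sigma$, Morrey's embedding $W^{1,p}(\Sigma)\hookrightarrow C^{0,\mu}(\Sigma)$ with $\mu=1-\frac{n-1}{p}$ gives the uniform Hölder bound.

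The only delicate point is Step 2: one must not use the Lipschitz constant of $f_\epsilon^{-1}$ quantitatively. The cancellation $\Phi_\epsilon^{1/2}\lesssim u_\epsilon$ is exactly what removes the singular factor $1/u_\epsilon$ near points where $u_\epsilon$ is small. The justification of the chain rule in Step 1 is routine once $\epsilon$ is fixed.
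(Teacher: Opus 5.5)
Your proposal is correct and follows essentially the same route as the paper: the Lipschitz inverse $f_\epsilon^{-1}$ is used only qualitatively to justify the chain rule, and the uniform bound comes from $f_\epsilon'(t)\ge 2t$, Cauchy--Schwarz on $\nabla\Phi_\epsilon$, and $\Phi_\epsilon\le\gamma_{1,\epsilon}u_\epsilon^2$ cancelling the factor $u_\epsilon$, followed by Morrey's embedding. The only cosmetic difference is that you bound $\bigl(\sum_i c_{i,\epsilon}|\nabla\phi_{i,\epsilon}|^2\bigr)^{1/2}$ pointwise by $\sum_i|\nabla\phi_{i,\epsilon}|$ using $c_{i,\epsilon}\le 1$, whereas the paper invokes Jensen's inequality for $x\mapsto x^{p/2}$.
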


\begin{proof}
From the preamble, we have that $u_\epsilon = f_\epsilon^{-1}\circ\Phi_\epsilon$ and that $f_\epsilon^{-1}$ is Lipschitz. Moreover, from Propositions \ref{u-Linfinity} and \ref{C0W1p-bounds}, it follows that $u_\epsilon \in L^p(\Sigma)$ and $\Phi_\epsilon\in W^{1,p}(\Sigma)$. Hence, $u_\epsilon \in W^{1,p}(\Sigma)$ and $|\nabla_\Sigma u_\epsilon| = |(f^{-1})'(\Phi_\epsilon)|\cdot |\nabla_\Sigma \Phi_\epsilon|$ almost everywhere in $\Sigma$, where $\nabla_\Sigma$ denotes the induced gradient on $\Sigma$ (see Proposition 2.5 in \cite{Hebey}). 

It remains to show that the $W^{1,p}(\Sigma)$-bounds are uniform. Notice that by Proposition \ref{u-Linfinity}, we have already uniform estimates for the $L^p$-norm of $u_\epsilon$. To show uniform estimates for the $L^p$-norm of $|\nabla_\Sigma u_\epsilon|$ we proceed as follows. First, from (\ref{PhiDef}) we obtain that
\begin{equation}
|\nabla_\Sigma \Phi_\epsilon| = 2\left|\sum_{i=1}^kc_{i,\epsilon}\phi_{i,\epsilon}\nabla_\Sigma \phi_{i,\epsilon}\right|
\end{equation}
holds almost everywhere on $\Sigma$. Then, by the Cauchy-Schwarz inequality, 
\begin{equation}\
|\nabla_\Sigma \Phi_\epsilon| \le 2 \Phi_\epsilon^{\frac{1}{2}} \left(\sum_{i=1}^k c_{i,\epsilon}|\nabla_\Sigma \phi_{i,\epsilon}|^2\right)^{\frac{1}{2}}.
\end{equation}
Going back to (\ref{EulerEq3}), we deduce that 
\begin{equation}\label{u-HolderBound-1}
|\nabla_\Sigma \Phi_\epsilon| \le Cu_\epsilon\left(\sum_{i=1}^kc_{i,\epsilon}|\nabla_\Sigma \phi_{i,\epsilon}|^2\right)^{\frac12}.
\end{equation}
Using that $f_\epsilon'(t) \ge 2t$ on $\mathbb{R}^+$ and the convexity of $\mathbb{R}^+\ni x\mapsto x^{\frac{p}{2}}$ for $p\ge 2$, it then follows from (\ref{u-HolderBound-1}) and from Jensen's inequality that
\begin{equation}
|\nabla_\Sigma u_\epsilon|^p = \frac{|\nabla_\Sigma \Phi_\epsilon|^p}{|f'_\epsilon(u_\epsilon)|^p} \le C \sum_{i=1}^k |\nabla_\Sigma \phi_{i,\epsilon}|^p
\end{equation}
almost everywhere on $\Sigma$. Therefore, for $p\ge 2$, we have \[\|\nabla_\Sigma u_\epsilon\|_{L^p(\Sigma)}\le C\sum_{i=1}^k\|\nabla_\Sigma \phi_{i,\epsilon}\|_{L^p(\Sigma)} \le C \] due to Proposition \ref{C0W1p-bounds}. Together with Proposition \ref{u-Linfinity}, this shows the uniform boundedness of $\{u_\epsilon\}_{\epsilon>0}$ in $W^{1,p}(\Sigma)$. The uniform estimates in the $C^{0,\mu}$-norm now follow by Morrey’s embedding, after choosing the exponent sufficiently large. 
\end{proof}

\begin{proposition}\label{KeyEstimate}
There exists a constant $C$, independent of $\epsilon$, such that
\begin{equation}\label{KeyEstimate2}
\epsilon \int_\Sigma u_\epsilon^{-\epsilon - N}\;d\sigma_g \le C.
\end{equation}
Moreover, for any $\eta\in C^\infty(\Sigma)$, the following holds as $\epsilon \to 0^+$:
\begin{equation}\label{KeyEstimate3}
\int_\Sigma \eta \left\{ \gamma_{1,\epsilon}u^2_\epsilon - \sum_{i=1}^k c_{i,\epsilon}\phi_{i,\epsilon}^2 \right\}\;d\sigma_g = O(\epsilon^{\frac{2}{N+\epsilon}})\|\eta\|_{L^\infty(\Sigma)}.
\end{equation}
\end{proposition}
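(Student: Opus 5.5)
We will derive both statements directly from the Euler--Lagrange equation (\ref{EulerEq2}), with (\ref{KeyEstimate2}) coming from integrating against a well-chosen weight. Dividing (\ref{EulerEq2}) by $u_\epsilon^{N}$ gives, a.e. on $\Sigma$,
\[
\gamma_{2,\epsilon}u_\epsilon^{-\epsilon-N} = \gamma_{1,\epsilon} - u_\epsilon^{-2}\sum_{i=1}^k c_{i,\epsilon}\phi_{i,\epsilon}^2 \le \gamma_{1,\epsilon}.
\]
Integrating over $\Sigma$ (recall $\mathrm{Vol}(\Sigma,d\sigma_g)=1$) yields $\gamma_{2,\epsilon}\int_\Sigma u_\epsilon^{-\epsilon-N}\,d\sigma_g\le \gamma_{1,\epsilon}$. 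By Corollary \ref{GammaBound}, $\gamma_{2,\epsilon}\ge C^{-1}\epsilon$ and $\gamma_{1,\epsilon}\le 1+C\epsilon$, which gives (\ref{KeyEstimate2}) at once. Division by $u_\epsilon$ is legitimate because Proposition \ref{u-lowerbound} makes $u_\epsilon$ essentially bounded below for each fixed $\epsilon$.

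For (\ref{KeyEstimate3}), we use the rearranged form (\ref{EulerEq3}):
\[
\gamma_{1,\epsilon}u_\epsilon^2-\sum_i c_{i,\epsilon}\phi_{i,\epsilon}^2=\gamma_{2,\epsilon}u_\epsilon^{2-N-\epsilon}.
\]
It therefore suffices to show $\gamma_{2,\epsilon}\int_\Sigma u_\epsilon^{2-N-\epsilon}\,d\sigma_g = O(\epsilon^{\frac{2}{N+\epsilon}})$, since then the left side of (\ref{KeyEstimate3}) is bounded by $\|\eta\|_{L^\infty(\Sigma)}$ times this quantity. We apply H\"older's inequality with exponents $\frac{N+\epsilon}{N+\epsilon-2}$ and $\frac{N+\epsilon}{2}$ to $u_\epsilon^{-(N+\epsilon-2)}\cdot 1$ on the unit-volume $\Sigma$:
\[
\int_\Sigma u_\epsilon^{2-N-\epsilon}\,d\sigma_g\le\Big(\int_\Sigma u_\epsilon^{-N-\epsilon}\,d\sigma_g\Big)^{\frac{N+\epsilon-2}{N+\epsilon}}.
\]
By the first part, $\int_\Sigma u_\epsilon^{-N-\epsilon}\le C\epsilon^{-1}$. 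Combining this with $\gamma_{2,\epsilon}\le C\epsilon$ gives a bound of order
\[
C\epsilon\cdot \epsilon^{-\frac{N+\epsilon-2}{N+\epsilon}} = C\epsilon^{\frac{2}{N+\epsilon}},
\]
which is exactly the stated rate, with constants uniform in $\epsilon$ small.

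The computation is routine; the only point requiring care is that (\ref{KeyEstimate2}) must hold with constants independent of $\epsilon$. This needs the two-sided bounds in Corollary \ref{GammaBound}, which in turn rest on the uniform upper and lower bounds for $|\lambda_2(u_\epsilon)|$ and on $\int_\Sigma u_\epsilon^{-\epsilon}\le C$ from Proposition \ref{existence}. Note that $\eta$ smooth is not essential: the argument works for any $\eta\in L^\infty(\Sigma)$.
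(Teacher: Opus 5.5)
Your proposal is correct and follows the paper's proof essentially step for step. You divide the Euler--Lagrange equation by $u_\epsilon^N$, integrate over the unit-volume boundary and apply Corollary \ref{GammaBound} to get the first estimate. You then write the bracket as $\gamma_{2,\epsilon}u_\epsilon^{2-N-\epsilon}$ and apply H\"older with exponents $\frac{N+\epsilon}{N+\epsilon-2}$ and $\frac{N+\epsilon}{2}$, combined with the first estimate, to get the rate $\epsilon^{\frac{2}{N+\epsilon}}$. Your added remarks, that the division is justified by Proposition \ref{u-lowerbound} and that any $\eta\in L^\infty(\Sigma)$ works, are accurate.
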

\begin{proof}
Multiply (\ref{EulerEq2}) by $u_\epsilon^{-N}$ and integrate to get
\begin{equation}
\gamma_{2,\epsilon}\int_{\Sigma} u_\epsilon^{-\epsilon-N}\;d\sigma_g = \gamma_{1,\epsilon} - \int_{\Sigma}u_\epsilon^{-2}\sum_{i=1}^kc_{i,\epsilon}\phi_{i,\epsilon}^2\;d\sigma_g\le \gamma_{1,\epsilon}.
\end{equation}
Estimate (\ref{KeyEstimate2}) now follows from Corollary \ref{GammaBound}. For the proof of (\ref{KeyEstimate3}), let $\eta\in C^\infty(\Sigma)$ be arbitrary, and multiply (\ref{EulerEq2}) by $\eta u_\epsilon^{2-N}$ to get
\begin{equation}\label{KeyEstimate4}
\int_\Sigma \eta\left\{\gamma_{1,\epsilon}u_\epsilon^2 - \sum_{i=1}^kc_{i,\epsilon}\phi_{i,\epsilon}^2 \right\}\;d\sigma_g = \underbrace{\gamma_{2,\epsilon}\int_\Sigma \eta u_\epsilon^{2-N-\epsilon}}_{:=A_\epsilon}\;d\sigma_g.
\end{equation}
Applying H\"older's inequality on the right-hand side of (\ref{KeyEstimate4}) gives
\begin{equation}
\begin{split}
A_\epsilon & \le C\|\eta\|_{L^\infty(\Sigma)}\cdot\epsilon \left(\int_\Sigma u_\epsilon^{-\epsilon-N}\;d\sigma_g\right)^{\frac{2-N-\epsilon}{-\epsilon - N}} \\ &=C\|\eta\|_{L^\infty(\Sigma)}\cdot \epsilon^{\frac{2}{N+\epsilon}}\left(\epsilon\int_\Sigma u_\epsilon^{-\epsilon-N}\;d\sigma_g\right)^{\frac{N+\epsilon-2}{N+\epsilon}},
\end{split}
\end{equation}
where to get the inequality we have used Corollary \ref{GammaBound}.
Estimate (\ref{KeyEstimate2}) then implies estimate (\ref{KeyEstimate3}). This completes the proof. 
\end{proof}

\section{Taking the limit $\epsilon\to0^+$: Proof of Theorem \ref{MainTheorem}}\label{Tlimit}

Let $\{u_\epsilon\}_{\epsilon>0}$ be the family of maximal functions for $F_{2,\epsilon}$ provided by Proposition \ref{existence}. Take any sequence $\epsilon_j>0$ such that $\epsilon_j \to 0^+$ as $j\to\infty$, and set $u_j:=u_{\epsilon_j}$. The associated sequence of generalized second eigenfunctions is denoted by $\{\phi_{i,\epsilon_j}\}_{j =1}^\infty$, where $i\in\{1,\cdots,k\}$ is fixed. We will keep the same notation for any subsequence.

By Proposition \ref{u-HolderBound} and the compact embedding of H\"older spaces, the sequence $\{u_j\}_{j\ge1}$ is relatively compact in $C^{0,\beta}(\Sigma)$ for some $0< \beta < \mu$. In particular, and up to the extraction of a subsequence, there exists a $u \in C^{0,\beta}(\Sigma)$ such that 
\begin{equation}\label{l1}
u_j \to u \text{ in }C^{0,\beta}(\Sigma).
\end{equation}
Since $\|u_j\|_{L^N(\Sigma)} = 1$ and $u_j\ge 0$ on $\Sigma$ for all $j\ge 1$, we conclude that $u$ is non-negative on $\Sigma$ and 
\begin{equation}\label{l2}
\int_\Sigma u^N\;d\sigma_g = 1.
\end{equation}
Another consequence of Proposition \ref{u-HolderBound} is that $u\in W^{1,2}(\Sigma)$. 

We now deal with the associated sequence of eigenfunctions $\{\phi_{i,\epsilon_j}\}_{j=1}^\infty$. As these are generalized second eigenfunctions, they satisfy 
\begin{equation}\label{l3}
\begin{cases}
\Delta_g \phi_{i,\epsilon_j} & = 0 \; (\text{in } M); \\
B_g(\phi_{i,\epsilon_j}) & = \lambda_2(u_j) \phi_{i,\epsilon_j}u_j^{N-2} \;(\text{on }\Sigma).
\end{cases}
\end{equation}
with the normalization
\begin{equation}\label{l4}
\int_\Sigma \phi_{i,\epsilon_j}^2u_j^{N-2}\;d\sigma_g = 1.
\end{equation}
By Propositions \ref{bound-eigenfunctions} and \ref{C0W1p-bounds}, up to the extraction of a subsequence, for each $i\in \{1,\cdots,k\}$, there exists $\phi_i\in C^{0,\beta}(\Sigma)\cap W^{1,2}(M)$ (with a possibly smaller $\beta$) such that 
\begin{equation}\label{l5}
\begin{split}
\phi_{i,\epsilon_j} &\to \phi_i \text{ in }C^{0,\beta}(\Sigma),\\
\phi_{i,\epsilon_j} &\rightharpoonup \phi_i \text{ in } W^{1,2}(M).
\end{split}
\end{equation}
From (\ref{l4}), we deduce that
\begin{equation}\label{l6}
\int_\Sigma \phi_i^2u^{N-2}d\sigma_g = 1.
\end{equation}
On the other hand, it follows from Proposition \ref{EigenvalEstimate} that $\lim_{j\to\infty} \lambda_2(u_j)$ exists after possibly extracting a subsequence. We denote this limit by $\lambda_2$.

We are now in a position to take limits. The variational characterization of (\ref{l3}) gives
\begin{equation}\label{l7}
\int_M \langle \nabla_g \phi_{i,\epsilon_j}, \nabla_g \psi\rangle\;dv_g + 2c_n \int_\Sigma h_g\phi_{i,\epsilon_j}\psi \;d\sigma_g = \lambda_2(u_{\epsilon_j})\int_\Sigma \phi_{i,\epsilon_j}\psi u_j^{N-2}\;d\sigma_g
\end{equation}
for all $\psi\in W^{1,2}(M)$. As a consequence of (\ref{l5}), we obtain
\begin{equation}\label{l8}
\int_M\langle\nabla_g\phi_i,\nabla_g\psi\rangle\;dv_g + 2c_n\int_\Sigma h_g \phi_i\psi\;d\sigma_g = \lambda_2\int_\Sigma \phi_i\psi u^{N-2}\;d\sigma_g.
\end{equation}
Since $\psi \in W^{1,2}(M)$ is arbitrary, each $\phi_i$ satisfies
\begin{equation}\label{l9}
\begin{cases}
\Delta_g \phi_i & = 0\; (\text{in } M); \\
B_g(\phi_i) & = \lambda_2 \phi_iu^{N-2}\; (\text{on }\Sigma),
\end{cases}
\end{equation}
and thus elliptic regularity implies that $\phi_i\in C^{1,\beta}(\Sigma)\cap C^2(M^n)$. Furthermore, by (\ref{KeyEstimate3}) in Proposition \ref{KeyEstimate}, for any $\eta\in C^\infty(\Sigma)$, the following limit holds:
\begin{equation}\label{l10}
\lim_{j\to \infty} \int_\Sigma \eta\left\{\gamma_{1,\epsilon_j}u_j^2 - \sum_{i=1}^kc_{i,\epsilon_j}\phi_{i,\epsilon_j}^2\right\}\;d\sigma_g = \int_\Sigma \eta\left\{u^2 - \sum_{i=1}^kc_i\phi_i^2\right\}\;d\sigma_g = 0,
\end{equation}
where $c_i = \lim_{j\to\infty} c_{i,\epsilon_j}$ (after possibly extracting a subsequence). Notice that for the convergence of $\gamma_{1,\epsilon_j}$ we have used Corollary \ref{GammaBound}. Also observe that not all $c_i$ are zero since $\sum_{i=1}^kc_{i,\epsilon_j} = 1$, and so we may assume that $c_i>0$ for all $i$ after discarding those which are zero. From (\ref{l10}) we conclude that
\begin{equation}\label{l11}
\sum_{i=1}^kc_i\phi_i^2 = u^2 
\end{equation}
holds on $\Sigma$. By rescaling, we could redefine each $\phi_i$ by $\sqrt{c_i}\phi$ and get (\ref{MT2}) as claimed in Theorem \ref{MainTheorem}. From now on, we assume this has been done. 

There are still a few remaining aspects to complete the proof of Theorem \ref{MainTheorem}. We start with the following:

\begin{claim}\label{ZeroBoundaryMeasure}
$u^{-1}(0)$ has zero (boundary) Riemannian measure. In particular, $u\in L^N_{>0}(\Sigma)$.
\end{claim}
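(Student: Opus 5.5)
Since $\sum_{i=1}^k\phi_i^2=u^2$ on $\Sigma$ by (\ref{l11}), $u^{-1}(0)$ is exactly the common zero set on $\Sigma$ of the $\phi_i$. So it suffices to show that a nontrivial solution of (\ref{l9}) cannot vanish on a subset of $\Sigma$ of positive measure. I will argue by contradiction: suppose $Z:=u^{-1}(0)$ has positive boundary measure.

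\textbf{Step 1: Cauchy data on $Z$.} On $Z$ every $\phi_i$ vanishes. Each $\phi_i$ satisfies $\Delta_g\phi_i=0$ in $M$ and
\[
\partial_\nu\phi_i=-2c_nh_g\phi_i+\lambda_2\phi_iu^{N-2}
\]
on $\Sigma$. The right-hand side vanishes on $Z$, so $\partial_\nu\phi_i=0$ a.e. on $Z$. Thus $\phi_i$ is harmonic with both Dirichlet and Neumann data vanishing on a boundary set of positive measure. Regularity is available: $\phi_i\in C^{1,\beta}$ up to $\Sigma$, since $u\in C^{0,\beta}$ and the boundary potential is Hölder.

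\textbf{Step 2: unique continuation.} This is the main obstacle. Classical unique continuation from an open boundary patch does not apply, because $Z$ is only measurable. I would invoke Z.~Li's recent result \cite{Li}, as the introduction indicates. That result says a harmonic function (or solution of an elliptic equation with suitable coefficients) in a Lipschitz or smooth domain, $C^1$ up to the boundary, whose value and normal derivative both vanish on a boundary set of positive measure, must vanish identically. Applying it gives $\phi_i\equiv0$ in $M$. If Li's hypotheses require a Robin-type condition rather than simultaneous vanishing, I would instead use it in the form: solutions of $\Delta\phi=0$, $\partial_\nu\phi=V\phi$ with $V$ bounded, vanishing on a positive-measure boundary set, are trivial. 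Here $V=-2c_nh_g+\lambda_2u^{N-2}\in L^\infty(\Sigma)$ by Proposition \ref{u-Linfinity}.

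\textbf{Step 3: contradiction.} Once $\phi_i\equiv 0$ for all $i$, (\ref{l11}) forces $u\equiv0$ on $\Sigma$. This contradicts the normalization (\ref{l2}), $\int_\Sigma u^N\,d\sigma_g=1$. Alternatively it contradicts (\ref{l6}), and it also contradicts the remark after Definition \ref{GeneralizedEigen} that generalized eigenfunctions cannot vanish on $\Sigma$, since $0\notin\mathrm{Spec}(L_g^D)$. Hence $|Z|=0$, and $u\in L^N_{>0}(\Sigma)$ since $u\in C^{0,\beta}\subset L^N$.

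The sharper bound $\dim_{\mathrm H}(u^{-1}(0))\le n-2$ would come from the finer quantitative part of Li's theorem, which bounds the size of the boundary zero set where both the solution and its normal derivative vanish. For the claim as stated, only the positive-measure version is needed.
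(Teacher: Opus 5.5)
Your route is the paper's. The set $u^{-1}(0)$ lies in the boundary zero set of a single $\phi_{i_o}$. This function solves $\Delta_g\phi_{i_o}=0$ in $M$ together with the Robin condition $\partial_{\nu}\phi_{i_o}=(\lambda_2u^{N-2}-2c_nh_g)\phi_{i_o}$ on all of $\Sigma$, with a H\"older potential, and it is nontrivial by (\ref{l6}). Li's unique continuation theorem \cite{Li} then finishes the proof. The paper quotes Theorem 1.1(b) of \cite{Li}, which gives $\dim_{\mathrm{H}}(\phi_{i_o}^{-1}(0)\cap\Sigma)\le n-2$ and hence measure zero directly, with no contradiction argument. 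Your contradiction with (\ref{l2}) or (\ref{l6}) just repackages the weaker positive-measure consequence.

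There is, however, a real error in how you first phrase the key step. The statement you lead with in Step 2 is this: a harmonic function that is $C^1$ up to a smooth boundary, and whose value and normal derivative both vanish on a boundary set of positive measure, must vanish identically. For $n\ge 3$ that statement is false. Bourgain and Wolff constructed a nonzero harmonic function on a half-space of dimension at least three, $C^1$ up to the boundary, with $u$ and $\nabla u$ vanishing simultaneously on a boundary set of positive measure. Restricting it to a bounded smooth subdomain gives a counterexample in your setting. This is why boundary unique continuation results of Cauchy-data type require vanishing on a relatively open boundary set.

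So Step 1, which reduces everything to Cauchy data on the measurable set $Z$, discards exactly the information that makes the argument work: the relation $\partial_\nu\phi_i=V\phi_i$ holds on the whole of $\Sigma$, not just on $Z$. Your fallback formulation is the correct one and is what the paper uses. There, $V=\lambda_2u^{N-2}-2c_nh_g$ is bounded, and in fact H\"older continuous since $u\in C^{0,\beta}(\Sigma)$. Drop Step 1 and the Cauchy-data version, and the argument is complete.
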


\begin{proof}
The result follows from Li's work \cite{Li}; see Theorem 1.1, part (b). Indeed, by (\ref{l11}), $u^{-1}(0) = \cap_{i=1}^k (\phi_i^{-1}(0)\cap \Sigma) \subseteq \phi_{i_o}^{-1}(0)\cap \Sigma$ for some $i_o\in \{1,\cdots, k\}$. Since $\phi_{i_o}$ solves (\ref{l9}), it solves $\Delta_g \phi_{i_o} = 0$ (in $M$), $\partial_{\nu_g}\phi_{i_o} = (\lambda_2u^{N-2} - 2c_nh_g)\phi_{i_o}$ (on $\Sigma$) after rearranging. Moreover, since $\phi_{i_o}\in C^{0,\beta}(\Sigma)\cap W^{1,2}(M)$ and $u\in C^{0,\beta}(\Sigma)$, the hypotheses of Theorem 1.1, part (b), in \cite{Li} are met, thus it follows that the Hausdorff dimension of $\phi_{i_o}^{-1}(0)\cap \Sigma$ is at most $n-2$. Hence, $u^{-1}(0)\subseteq \phi_{i_o}^{-1}(0)\cap \Sigma$ has zero (boundary) Riemannian measure. 
\end{proof}

Since the limiting function $u$ is positive a.e. on $\Sigma$, we have the boundedness of $\{\lambda_1(u_{j})\}_{j=1}^\infty$ thanks to Proposition \ref{EigenvalEstimate}. Set $\lambda_1 = \lim_{j\to\infty} \lambda_1(u_j)$, which is possible after passing to a further subsequence. 

\begin{claim}\label{LimitEigenvalues}
$\lambda_2$ coincides with $\lambda_2(u)$. Hence, $\phi_i\in E_2(u)$ for each $i=1,\cdots, k$.
\end{claim}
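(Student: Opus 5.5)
The plan is to follow the proof of Lemma \ref{ConvergenceEigenvalues} almost verbatim, with the weak $L^{n-1}(\Sigma)$ convergence of $u_j^{N-2}$ replaced by the much stronger uniform convergence $u_j\to u$ in $C^{0,\beta}(\Sigma)$ from (\ref{l1}). By Claim \ref{ZeroBoundaryMeasure}, $u\in L^N_{>0}(\Sigma)$, so Proposition \ref{ExistenceSimplicity} and Lemma \ref{Char2} apply to $u$. We already know from (\ref{l9}) that $\lambda_2$ is a generalized eigenvalue of $u$ with eigenfunctions $\phi_i$ normalized by (\ref{l6}). It therefore suffices to prove $\lambda_2(u)\le\lambda_2$ and $\lambda_2\le\lambda_2(u)$.

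First I treat the first eigenfunctions. Let $f_j$ be the first generalized eigenfunctions of $u_j$, with $f_j\ge 0$ and $\int_\Sigma f_j^2u_j^{N-2}\,d\sigma_g=1$. By Proposition \ref{EigenvalEstimate} (now applicable since $u^{-1}(0)$ is null), the $\lambda_1(u_j)$ are bounded, and its proof shows $\{f_j\}$ is bounded in $W^{1,2}(M)$. Remark \ref{RMoserBound} then gives uniform $L^\infty(\Sigma)$ bounds. Passing to the limit as in Lemma \ref{ConvergenceEigenvalues}, we get $f\ge 0$ solving the eigenvalue equation with $\lambda_1=\lim\lambda_1(u_j)$ and weight $u$. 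Since $u_j^{N-2}\to u^{N-2}$ uniformly and $f_j\to f$ in $L^2(\Sigma)$, the normalizations pass to the limit: $\int_\Sigma f^2u^{N-2}=1$ and $\int_\Sigma f\phi_iu^{N-2}=0$. The latter uses $\int_\Sigma f_j\phi_{i,\epsilon_j}u_j^{N-2}=0$, which holds because the $\phi_{i,\epsilon_j}$ lie in $E_2(u_j)$ and $\lambda_1(u_j)<\lambda_2(u_j)$ by simplicity. The positivity argument (\ref{CE7}) with the a.e. positive first eigenfunction $f_1$ of $u$ then gives $\lambda_1=\lambda_1(u)$ and $f=cf_1$ on $\Sigma$.

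Next comes the inequality $\lambda_2(u)\le\lambda_2$. Each $\phi_i$ is nontrivial on $\Sigma$ and orthogonal to $f$, hence to $f_1$, in $L^2(u^{N-2}d\sigma_g)$. Lemma \ref{Char2} then gives $\lambda_2(u)\le\mathcal R^u_g(\phi_i)=\lambda_2$, where the last equality follows by testing (\ref{l8}) with $\psi=\phi_i$ and using (\ref{l6}).

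For the reverse inequality $\lambda_2\le\lambda_2(u)$, take $\phi_u\in E_2(u)$ normalized and orthogonal to $f_1$. Apply Lemma \ref{Char2} to $u_j$ with the test function $\phi_u-c_jf_j$, where $c_j=\int_\Sigma\phi_uf_ju_j^{N-2}$. This gives
\[
\lambda_2(u_j)\le\frac{\int_M|\nabla_g\phi_u|^2+2c_nh_g\int_\Sigma\phi_u^2-c_j^2\lambda_1(u_j)}{\int_\Sigma\phi_u^2u_j^{N-2}-c_j^2}.
\]
The numerator's first part equals $\lambda_2(u)$ because of the normalization. Moreover $c_j\to\int_\Sigma\phi_ufu^{N-2}=0$ and $\int_\Sigma\phi_u^2u_j^{N-2}\to1$, both by uniform convergence of $u_j$ and $L^\infty$ bounds on $\phi_u$ (Proposition \ref{Estimates}). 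Letting $j\to\infty$ gives $\lambda_2\le\lambda_2(u)$. Hence $\lambda_2=\lambda_2(u)$, and then (\ref{l9}) says exactly that $\phi_i\in E_2(u)$.

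The main obstacle I expect is the first-eigenfunction step: guaranteeing that the $f_j$ stay bounded and that the limit $f$ is nonzero, so that orthogonality to $f_1$ is meaningful. This is exactly where the null measure of $u^{-1}(0)$ (Claim \ref{ZeroBoundaryMeasure}) enters, through Proposition \ref{EigenvalEstimate}. Beyond that point, the uniform convergence of $u_j$ makes all the limit passages routine.
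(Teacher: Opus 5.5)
Your proposal is correct and follows essentially the same route as the paper. You pass to the limit in the first eigenfunctions $f_j$ and use (\ref{CE7}) to get $\lambda_1=\lambda_1(u)$, then use orthogonality plus Lemma \ref{Char2} for $\lambda_2(u)\le\lambda_2$, and the projected test function $\phi_u-c_jf_j$ for the reverse inequality; the only cosmetic differences are that you invoke the proof of Proposition \ref{EigenvalEstimate} instead of Proposition \ref{Boundedness-SeqConfFact} for the $W^{1,2}(M)$ bound on $f_j$, and you use the uniform convergence $u_j\to u$ rather than weak $L^{n-1}(\Sigma)$ convergence, which only simplifies the limit passages.
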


\begin{proof}
The main ideas here are as in the proof of Lemma \ref{ConvergenceEigenvalues}. Let $\{f_j\}_{j=1}^\infty$ be the sequence of first eigenfunctions associated to $\{u_j\}_{j=1}^\infty$. By Proposition \ref{ExistenceSimplicity}, we can assume that each $f_j$ is positive a.e. on $\Sigma$, and also we normalize such that $\int_\Sigma f_j^2u_j^{N-2}\;d\sigma_g = 1$. The boundedness of $\{f_j\}_{j=1}^\infty$ follows from Proposition \ref{Boundedness-SeqConfFact}. Denote by $f\ge 0$ its limiting function, and recall that $\lambda_1 = \lim_{j\to \infty}\lambda_1(u_j)$. Repeating the argument from the proof of Lemma \ref{ConvergenceEigenvalues}, the same equation (\ref{CE7}) can be derived. This yields $\lambda_1 = \lambda_1(u)$ and $f$ is a first eigenfunction with $f>0$ a.e. on $\Sigma$. 

Observe that $\lambda_1(u_j)<\lambda_2(u_j)$ by Proposition \ref{ExistenceSimplicity}, and so $\int_\Sigma f_j\phi_{i,j}u_j^{N-2}\;d\sigma_g = 0$ for all $j\in \mathbb{N}$ and for all $i\in \{1,\cdots, k\}$. Therefore, $\int_\Sigma f\phi_iu^{N-2}\;d\sigma_g = 0$ for every $i\in \{1,\cdots, k\}$. Due to Lemma \ref{Char2}, it then follows that $\lambda_2(u)\le \lambda_2$. To prove the opposite inequality, let $\psi_2$ be an eigenfunction associated to $\lambda_2(u)$, and set $c_j = \int_\Sigma \psi_2f_ju_j^{N-2}\;d\sigma_g$. Then $c_j \to \int_\Sigma \psi_2fu^{N-2}\;d\sigma_g = 0$ and so 
\begin{equation}
\lambda_2(u_j)  \le \R_g^{u_j} \left(\psi_2 - c_jf_j\right).
\end{equation}
Taking limits yields $\lambda_2 \le \lambda_2(u)$, as desired. This finishes the proof.

\end{proof}

\begin{claim}\label{u-maximal}
$u$ maximizes $\lambda_2$ over $L^N_{>0}(\Sigma)$, that is, $\lambda_2(u)$ is maximal. 
\end{claim}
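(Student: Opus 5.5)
The goal is to show that $F_2(u)=\lambda_2(u)\ge F_2(w)$ for every $w\in L^N_{>0}(\Sigma)$; note that $\int_\Sigma u^N\,d\sigma_g=1$ by (\ref{l2}). By scale invariance I assume $\int_\Sigma w^N\,d\sigma_g=1$. The plan is to compare $w$ with the regularized maximizers $u_j$ through the functionals $F_{2,\epsilon_j}$, and then pass to the limit using Claim \ref{LimitEigenvalues}, which gives $\lambda_2(u_j)\to\lambda_2=\lambda_2(u)$.

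\textbf{Step 1 (the case $w\in\D_{\epsilon_0}$ for some $\epsilon_0>0$).} If $w^{-\epsilon_0}\in L^1(\Sigma)$, then by H\"older (since $\mathrm{Vol}(\Sigma)=1$) $w^{-\epsilon}\in L^1(\Sigma)$ for every $\epsilon\le\epsilon_0$, and $\int_\Sigma w^{-\epsilon}\,d\sigma_g\le(\int_\Sigma w^{-\epsilon_0}\,d\sigma_g)^{\epsilon/\epsilon_0}$. Maximality of $u_j$ for $F_{2,\epsilon_j}$ then gives
\[
\lambda_2(u_j)\ \ge\ \lambda_2(u_j)-\int_\Sigma u_j^{-\epsilon_j}\,d\sigma_g\ \ge\ \lambda_2(w)-\int_\Sigma w^{-\epsilon_j}\,d\sigma_g .
\]
As $\epsilon_j\to0$, dominated convergence shows $\int_\Sigma w^{-\epsilon_j}\,d\sigma_g\to|\{w>0\}|=1$. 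Since $w^{-\epsilon_j}\le 1+w^{-\epsilon_0}$, it is in fact cleaner to bound $\limsup$ of this integral by $1$ directly. Letting $j\to\infty$ yields only $\lambda_2(u)\ge\lambda_2(w)-1$, which is not yet the claim. So the penalization has to be removed more carefully, as follows.

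\textbf{Step 1, corrected: use $w_\delta=w+\delta$.} For a general $w\in L^N_{>0}(\Sigma)$, set $w_\delta:=w+\delta$ with $\delta>0$. Then $w_\delta\ge\delta$, so $w_\delta\in\D_\epsilon$ for all $\epsilon$, with $\int_\Sigma w_\delta^{-\epsilon}\,d\sigma_g\le\delta^{-\epsilon}$. By maximality,
\[
\lambda_2(u_j)\ \ge\ F_{2,\epsilon_j}(u_j)+\int_\Sigma u_j^{-\epsilon_j}\,d\sigma_g\ \ge\ F_{2,\epsilon_j}(w_\delta)\ =\ F_2(w_\delta)-\delta^{-\epsilon_j}\Big(\int_\Sigma w_\delta^N\Big)^{\epsilon_j/N},
\]
and the last term tends to $0$ as $j\to\infty$ for fixed $\delta$. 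The crucial point is that the term $\int_\Sigma u_j^{-\epsilon_j}$ is \emph{added} on the left, not subtracted: $\lambda_2(u_j)$ is bounded below by $F_{2,\epsilon_j}(u_j)$ plus a nonnegative term. Hence $\lambda_2(u)=\lim_j\lambda_2(u_j)\ge F_2(w_\delta)$. Here I use $\int_\Sigma u_j^N\,d\sigma_g=1$, so that $F_{2,\epsilon_j}(u_j)=\lambda_2(u_j)-\int_\Sigma u_j^{-\epsilon_j}\,d\sigma_g$. Step 1 is therefore in fact fine, and so is the plain version: $\lambda_2(u_j)\ge\lambda_2(u_j)-\int_\Sigma u_j^{-\epsilon_j}\ge\lambda_2(w)-\int_\Sigma w^{-\epsilon_j}$. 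The real issue is only the subtracted term on the $w$-side, which $w_\delta$ controls via $\delta^{-\epsilon_j}\to1$. That still leaves a $-1$ defect, though, so instead I will test with $F_{2,\epsilon_j}$ evaluated at suitable scalings; but the penalty is scale invariant. Resolution: the penalty at $w_\delta$ is $(\int_\Sigma w_\delta^{-\epsilon_j})(\int_\Sigma w_\delta^N)^{\epsilon_j/N}\to1$ as well, and the same constant $1$ appears on the left as $\int_\Sigma u_j^{-\epsilon_j}\,d\sigma_g$. So I need $\liminf_j\int_\Sigma u_j^{-\epsilon_j}\,d\sigma_g\ge1$, which holds by Jensen's inequality: $\int_\Sigma u_j^{-\epsilon_j}\ge(\int_\Sigma u_j^N)^{-\epsilon_j/N}=1$. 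Combined with the inequality $\lambda_2(u_j)-\int_\Sigma u_j^{-\epsilon_j}\ge\lambda_2(w_\delta)(\int_\Sigma w_\delta^N)^{(N-2)/N}-\int_\Sigma w_\delta^{-\epsilon_j}(\cdots)$, I also need $\limsup_j\int_\Sigma u_j^{-\epsilon_j}\le1$. For this I use the uniform $C^{0,\beta}$ convergence $u_j\to u$, the fact that $u^{-1}(0)$ is null (Claim \ref{ZeroBoundaryMeasure}), and the bound (\ref{KeyEstimate2}), which gives $\int_{\{u_j<\eta\}}u_j^{-\epsilon_j}\le\eta^{N}\cdot\epsilon_j^{-1}C$. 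That bound is too weak, so instead I use H\"older: $\int_{\{u_j<\eta\}}u_j^{-\epsilon_j}\le|\{u_j<\eta\}|^{N/(N+\epsilon_j)}(\int_\Sigma u_j^{-\epsilon_j-N})^{\epsilon_j/(N+\epsilon_j)}\le|\{u_j<\eta\}|^{N/(N+\epsilon_j)}(C/\epsilon_j)^{\epsilon_j/(N+\epsilon_j)}$. Since $(C/\epsilon)^{\epsilon}\to1$ and $|\{u_j<\eta\}|\to|\{u\le\eta\}|\to0$ as $\eta\to0$, while on $\{u_j\ge\eta\}$ we have $u_j^{-\epsilon_j}\to1$ uniformly, this gives $\int_\Sigma u_j^{-\epsilon_j}\to1$. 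This is the main technical step.

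\textbf{Step 2 (conclusion).} Taking $j\to\infty$ yields $\lambda_2(u)\ge F_2(w_\delta)$. Then let $\delta\to0$: $\int_\Sigma w_\delta^N\to1$, and $\lambda_2(w_\delta)\to\lambda_2(w)$. For the latter, $\limsup_\delta\lambda_2(w_\delta)\ge\lambda_2(w)$ suffices. I get it from the min-max characterization: for fixed test spaces, $\int_\Sigma\phi^2w_\delta^{N-2}\ge\int_\Sigma\phi^2w^{N-2}$ and the numerators are negative on the relevant (negative) spaces, so $\mathcal{R}^{w_\delta}_g\ge$ the corresponding comparison. More directly: on a two-dimensional space realizing $\lambda_2(w_\delta)<0$, the quotient with respect to $w$ is at most that with respect to $w_\delta$ (negative numerator, smaller denominator), so $\lambda_2(w)\le\lambda_2(w_\delta)$. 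The Grassmannians agree by Lemma \ref{TestFunctions}, since both $w$ and $w_\delta$ are positive a.e. Hence $\lambda_2(u)\ge F_2(w)$.
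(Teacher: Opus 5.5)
Once you remove the retracted intermediate claims (the penalty term at $w_\delta$ tends to $1$, not $0$, so ``Step 1 is therefore in fact fine'' is false as written), your argument is correct and is essentially the paper's proof. It tests the maximality of $u_j$ for $F_{2,\epsilon_j}$ against a cutoff $w_\delta\ge\delta$, shows that both penalty terms tend to $1$, passes to the limit with Claim \ref{LimitEigenvalues}, and then lets $\delta\to0$ using $\lambda_2(w_\delta)\ge\lambda_2(w)$ for negative eigenvalues. The differences from the paper are cosmetic: you use $w+\delta$ instead of $\sup\{w,\delta\}$, argue directly rather than by contradiction, and use Jensen to get $\int_\Sigma u_j^{-\epsilon_j}\,d\sigma_g\ge 1$. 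On the $u_j$ side only this lower bound is actually needed, so your $\{u_j<\eta\}$ splitting, and with it (\ref{KeyEstimate2}) and Claim \ref{ZeroBoundaryMeasure}, can be dropped.
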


\begin{proof}
Let $u$ be as constructed, and assume on the contrary that there exists a generalized conformal factor $w\in L^N_{>0}(\Sigma)$ with $\int_\Sigma w^N\;d\sigma_g = 1$ and with $\lambda_2(w) = \lambda_2(u) + \eta$ for some $\eta>0$. Set $w_\delta:=\sup\{w,\delta\}$ for $\delta>0$ so that $w_\delta\in \mathcal{D}_\epsilon$ for $\epsilon>0$. 

We will show first that $\delta>0$ can be chosen such that
\begin{equation}\label{u-Maximal-1}
\lambda_2(w_\delta)\left(\int_\Sigma w_\delta^N\;d\sigma_g\right)^{\frac{N-2}{N}} > \lambda_2(u)+\frac12 \eta.
\end{equation}
By Proposition \ref{NegativeEigen}, the number of generalized negative eigenvalues of $w_\delta$ and $w$ is still $N([g])$. In particular, both $\lambda_2(w)$ and $\lambda_2(w_\delta)$ are negative. Since $w_\delta\ge w$ and since it is enough to consider those $\phi\in W^{1,2}(M)$ for which 
\begin{equation}
\int_M|\nabla_g \phi|^2\;dv_g + 2c_nh_g\int_\Sigma \phi^2\;d\sigma_g<0, 
\end{equation}
we deduce that $\lambda_2(w_\delta)\ge \lambda_2(w)$. As for the volume, we obtain
\begin{equation}
\int_\Sigma w_\delta^N\;d\sigma_g \le \delta^N+1
\end{equation} 
using that $\text{Vol}(\Sigma,d\sigma_g) =1$ and $\int_\Sigma w^N\;d\sigma_g = 1$. Since $\frac{N-2}{N}<1$, we have that $(\delta^N+1)^{\frac{N-2}{N}}\le 1+ \frac{N-2}{N}\delta^N$ and so
\begin{equation}
\left(\int_\Sigma w_\delta^N\;d\sigma_g\right)^{\frac{N-2}{N}} \le 1 + \frac{N-2}{N}\delta^N. 
\end{equation}
Inequality (\ref{u-Maximal-1}) now follows after choosing $\delta>0$ small enough. 

Recall that $u_j=u_{\epsilon_j}$ and that $\epsilon_j\to 0$ as $j\to \infty$, where $u_j$ maximizes $F_{2,\epsilon_j}$ over $\mathcal{D}_{\epsilon_j}$. As a consequence, 
\begin{equation}\label{u-Maximal-2}
\begin{split}
F_{2,\epsilon_j}(u_{\epsilon_j}) &\ge F_{2,\epsilon_j}(w_\delta) \\ &\ge \lambda_2(u)+\frac12 \eta - \left(\int_\Sigma w_\delta^{-\epsilon_j}\;d\sigma_g\right)\left(\int_\Sigma w_\delta^N\;d\sigma_g\right)^{\frac{\epsilon_j}{N}}.
\end{split}
\end{equation}
By Claim \ref{LimitEigenvalues}, we know that $\lim_{j\to \infty}\lambda_2(u_j) = \lambda_2 = \lambda_2(u)$. On the other hand, thanks to Proposition \ref{KeyEstimate}, 
\begin{equation}
\begin{split}
\int_\Sigma u_j^{-\epsilon_j}\;d\sigma_g &\le \left(\int_\Sigma u_j^{-N-\epsilon_j}\;d\sigma_g\right)^{\frac{\epsilon_j}{N+\epsilon_j}}\text{Vol}(\Sigma)^{\frac{N}{N+\epsilon_j}} \le C^{\frac{\epsilon_j}{N+\epsilon_j}} (\epsilon_j^{-1})^{\frac{\epsilon_j}{N+\epsilon_j}} \to 1
\end{split}
\end{equation}
as $j\to \infty$. Moreover,
\begin{equation}
\begin{split}
1 = \int_\Sigma u_j^{\frac{-\epsilon_j}{2}}u_j^{\frac{\epsilon_j}{2}}\;d\sigma_g &\le \left(\int_\Sigma u_j^{-\epsilon_j}\;d\sigma_g\right)^{\frac12}\left(\int_\Sigma u_j^{\epsilon_j}\;d\sigma_g\right)^{\frac12} \\&\le C^{\epsilon_j} \left(\int_\Sigma u_j^{\epsilon_j}\;d\sigma_g\right)^{\frac12},
\end{split}
\end{equation}
where we have used Proposition \ref{u-HolderBound}. Therefore, $\lim_{j\to \infty}\int_\Sigma u_j^{-\epsilon_j}\;d\sigma_g = 1$. Taking limits in (\ref{u-Maximal-2}) and using the fact that $w_\delta\ge \delta>0$ then yields
\begin{equation}
\lambda_2(u) - 1 \ge \lambda_2(u) + \frac12 \eta -1.
\end{equation}
This is a contradiction. Hence, the proof is complete. 
\end{proof}

\begin{claim}
$u$ is smooth outside its zero set, i.e. $u\in C^\infty(\Sigma\setminus u^{-1}(0))$.
\end{claim}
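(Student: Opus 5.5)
The plan is to bootstrap regularity from the identity (\ref{l11}), namely $u^2=\sum_{i=1}^k\phi_i^2$ on $\Sigma$ (after the rescaling by $\sqrt{c_i}$), together with the boundary value problems (\ref{l9}) satisfied by the $\phi_i$, which by Claim \ref{LimitEigenvalues} are second generalized eigenfunctions for $u$. Fix a point $x_0\in\Sigma^*:=\Sigma\setminus u^{-1}(0)$. Since $u$ is continuous, there is a relatively open neighbourhood $U\subset\Sigma$ of $x_0$ and $\delta>0$ with $u\ge\delta$ on $\overline U$. On $U$ we then have
\[
u^{N-2}=\Big(\sum_{i=1}^k\phi_i^2\Big)^{\frac{N-2}{2}},
\]
and the map $s\mapsto s^{\frac{N-2}{2}}$ is smooth on $[\delta^2,\infty)$. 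Consequently, whenever all $\phi_i|_{\Sigma}\in C^{m,\beta}(U)$, we also get $u^{N-2}\in C^{m,\beta}(U)$, and likewise $u\in C^{m,\beta}(U)$. Smoothness of $u$ near $x_0$ is therefore reduced to smoothness of the $\phi_i$ up to the boundary near $x_0$.

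I would run the following induction on $m\ge1$. The base case is $\phi_i\in C^{1,\beta}(\Sigma)$, established right after (\ref{l9}). For the inductive step, rewrite the boundary condition as a Neumann condition,
\[
\Delta_g\phi_i=0 \text{ in } M,\qquad \partial_{\nu_g}\phi_i=\big(\lambda_2(u)\,u^{N-2}-2c_nh_g\big)\phi_i \text{ on }\Sigma .
\]
Recall that $h_g$ is a constant. If $\phi_i|_\Sigma\in C^{m,\beta}(U)$, then by the previous paragraph the Neumann datum lies in $C^{m,\beta}(U)$. Local boundary Schauder estimates for the Laplacian with Neumann data (e.g.\ Gilbarg--Trudinger Thm.~6.26 and its higher-order versions, or Agmon--Douglis--Nirenberg) then give $\phi_i\in C^{m+1,\beta}$ up to the boundary on a slightly smaller neighbourhood $U'\Subset U$, intersected with a collar in $M$. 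In particular $\phi_i|_\Sigma\in C^{m+1,\beta}(U')$, and then also $u\in C^{m+1,\beta}(U')$. Shrinking the neighbourhood finitely many times for each $m$ gives $u\in C^\infty$ near $x_0$. Since $x_0\in\Sigma^*$ was arbitrary, $u\in C^\infty(\Sigma^*)$.

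The main technical point I expect is the localization. The coefficient $u^{N-2}$ is only known to be regular on $U$, not globally: near $u^{-1}(0)$ it is merely $C^{0,\beta}$, and a Lipschitz-type power at best. So the global elliptic theory for $\mathcal{D}_g$ used in Proposition \ref{C0W1p-bounds} cannot be applied directly. I would therefore work in boundary coordinates on a half-ball $B_r^+(x_0)$ whose flat part lies in $U$, and apply the interior-plus-boundary Schauder estimate on $B_{r/2}^+$ to the weak solution. The solution is already bounded, by Proposition \ref{Estimates}, and lies in $C^{1,\beta}$ up to the flat boundary, which justifies treating it as a classical solution there. Alternatively, one can multiply by a cutoff $\chi$ supported in the coordinate patch and treat $\chi\phi_i$ as solving a Neumann problem with data involving only $\phi_i$ and its first derivatives on the support of $\chi$, which are controlled inductively. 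In either version one must check that each step only loses a fixed fraction of the radius, so the induction closes.
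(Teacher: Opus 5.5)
Your proposal is correct and follows the same route as the paper. On the open set $\Sigma\setminus u^{-1}(0)$ you write $u=\bigl(\sum_i\phi_i^2\bigr)^{1/2}$ with $\phi_i\in C^{1,\beta}$, and bootstrap using the Neumann form of (\ref{l9}) together with local boundary Schauder estimates. Your version only spells out the localization away from $u^{-1}(0)$ that the paper leaves implicit in its phrase ``standard bootstrap argument.''
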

\begin{proof}
Recall that $u\in C^{0,\beta}(\Sigma)$, which implies that $\Sigma\setminus u^{-1}(0)$ is open. In open subsets of $\Sigma\setminus u^{-1}(0)$, it follows from (\ref{l11}) that we have $u=\sqrt{\sum_{i=1}^k\phi_i^2}$. Therefore, we can differentiate using that $\phi_i\in C^{1,\beta}(\Sigma)$. A standard bootstrap argument using (\ref{l9}) finishes the proof of the claim. 
\end{proof}

\begin{claim}\label{HE}
The harmonic extension $\hat u$ of $u$ is smooth and positive in the interior $M$. Moreover, $\hat u\in W^{1,2}(M)$.
\end{claim}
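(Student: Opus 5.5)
The harmonic extension $\hat u$ of $u$ is the unique solution of
\[
\Delta_g \hat u = 0 \ (\text{in } M), \qquad \hat u = u \ (\text{on } \Sigma).
\]
Recall that $R_g=0$, so $L_g=-\Delta_g$ and this is the same as the $L_g$-harmonic extension $\HH_g(u)$. Since $0\not\in\text{Spec}(L_g^D)$, the extension is unique.

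\textbf{Step 1: existence and $W^{1,2}$.} By Proposition \ref{u-HolderBound} and the limit (\ref{l1}), $u\in W^{1,p}(\Sigma)$ for every $p\ge 2$. In particular $u\in W^{1,2}(\Sigma)\subset H^{1/2}(\Sigma)$. Hence $u$ is the trace of some $U\in W^{1,2}(M)$, obtained from a bounded right inverse of the trace operator. We then write $\hat u = U + w$, where $w\in W^{1,2}_0(M)$ minimizes
\[
\int_M |\nabla_g (U+w)|^2\;dv_g
\]
over $W^{1,2}_0(M)$. Such a minimizer exists by the Poincaré inequality on $W^{1,2}_0(M)$, and it gives a weak solution with $\hat u\in W^{1,2}(M)$ and trace equal to $u$. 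We could alternatively solve the Dirichlet problem with $U$ as data and invoke Lax--Milgram; either way this step is routine.

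\textbf{Step 2: interior smoothness and continuity up to the boundary.} Weakly harmonic functions are smooth in $M$ by Weyl's lemma, i.e. standard interior elliptic regularity. Since $u\in C^{0,\beta}(\Sigma)$ and $\Sigma$ is smooth, every boundary point is regular; compare with barriers as in Gilbarg--Trudinger. Therefore $\hat u\in C^0(\overline M)$, with global Hölder regularity available from Schauder-type boundary estimates if needed. This continuity is convenient for the maximum principle but not essential, since the weak maximum principle for $W^{1,2}$ functions already suffices.

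\textbf{Step 3: positivity.} The boundary datum satisfies $u\ge 0$ on $\Sigma$. The weak maximum principle (Theorem 8.1 in \cite{Gilbarg}, applied to $-\hat u$ with $(\hat u)^-\in W^{1,2}_0(M)$) gives $\hat u\ge 0$ in $M$. By (\ref{l2}), $u\not\equiv 0$, and by Claim \ref{ZeroBoundaryMeasure} it is in fact positive a.e. Hence $\hat u\not\equiv 0$: otherwise its trace would vanish. Since $M$ is connected, the strong maximum principle for harmonic functions (or the Harnack inequality, Corollary 8.21 in \cite{Gilbarg}, as used in Proposition \ref{ExistenceSimplicity}) forces $\hat u>0$ everywhere in the interior $M$.

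The only mild subtlety is Step 3: we must ensure that $\hat u$ does not vanish identically, and we must apply the maximum principle in the weak setting. Both are handled by the normalization $\int_\Sigma u^N\,d\sigma_g=1$ and by the connectedness of $M$. Consequently $g_{\hat u}=\hat u^{\frac{4}{n-2}}g$ is a genuine smooth metric in $M$ that agrees with $u^{\frac{4}{n-2}}g$ on $\Sigma$. It may degenerate only on $u^{-1}(0)$, which is the setting of Corollary \ref{NS-HM}.
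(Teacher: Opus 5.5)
Your proof is correct and follows essentially the same route as the paper. You solve the Dirichlet problem, get interior smoothness from elliptic regularity, obtain $\hat u\ge 0$ from the maximum principle, and upgrade to $\hat u>0$ in $M$ via the strong maximum principle and connectedness, using $u\not\equiv 0$ on $\Sigma$. The only difference is that you build $\hat u$ variationally from $u\in W^{1,2}(\Sigma)\subset H^{1/2}(\Sigma)$ and use the weak maximum principle, which makes $\hat u\in W^{1,2}(M)$ automatic. The paper instead invokes classical Perron-type existence from Chapter 2 of \cite{Gilbarg} and the classical maximum principle, which gives continuity up to $\Sigma$ directly but leaves the $W^{1,2}(M)$ membership implicit.
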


\begin{proof}
By standard existence results (see Chapter 2 in \cite{Gilbarg}), given $u\in C(\Sigma)\cap W^{1,2}(\Sigma)$, there is a unique $\hat u\in C^2(M)\cap W^{1,2}(M)$ solution of 
\begin{equation}
\begin{cases}\label{HE1}
\Delta_g \hat u &= 0\; (\text{in } M);\\
\hat u &= u \;(\text{on }\Sigma).
\end{cases}
\end{equation}
Interior elliptic regularity then gives the smoothness of $\hat u$ in the interior of $M$. It follows from the classical maximum principle (Theorem 3.1 in \cite{Gilbarg}, for instance) that the maximum and minimum of $\hat u$ are attained on $\Sigma$. Since $u$ is nonnegative on $\Sigma$, $\hat u$ is nonnegative in $M$. Moreover, if there is an interior point $x_o\in M$ such that $\hat u(x_o) = 0$, then by connectedness we have that $\hat u \equiv 0$ on $M$, contradicting that $u>0$ a.e. on $\Sigma$. This finishes the proof. 
\end{proof}

\begin{claim}\label{HM}
Assume $k>1$ and set $\Sigma^*:=\Sigma\setminus u^{-1}(0)$. Then the map \newline$\Phi=(\frac{\phi_1}{\hat u},\cdots, \frac{\phi_k}{\hat u}): (M, g_{\hat u})\to (\mathbb{B}^k,g_E)$ satisfies $\Phi(\Sigma^*)\subset \mathbb{S}^{k-1}$ and it is a harmonic map with $\partial_{\nu_{g_{\hat u}}}\Phi \| \Phi$ on $\Sigma^*$. Here $g_{\hat u} = \hat u^{\frac{4}{n-2}}g$ is the conformal metric defined by the harmonic extension $\hat u$ of $u$, which agrees with $u^{\frac{4}{n-2}}g$ along $\Sigma$.
\end{claim}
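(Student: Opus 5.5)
The plan is to obtain every assertion from the conformal covariance laws (\ref{ConfPropCL}) and (\ref{RCL}) applied with the conformal factor $\hat u$ of Claim \ref{HE}. Since $R_g=0$, the operator $L_g$ equals $-\Delta_g$ on $M$, so $L_g\hat u=0$ and $L_g\phi_i=0$. Applying (\ref{ConfPropCL}) to $\phi=1$ gives
\[
c_nR_{g_{\hat u}}=L_{g_{\hat u}}(1)=\hat u^{-\frac{n+2}{n-2}}L_g(\hat u)=0 ,
\]
so $g_{\hat u}$ is scalar flat in $M$ and $L_{g_{\hat u}}=-\Delta_{g_{\hat u}}$ there. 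Applying (\ref{ConfPropCL}) again with $\phi=\phi_i/\hat u$ gives
\[
-\Delta_{g_{\hat u}}\Bigl(\frac{\phi_i}{\hat u}\Bigr)=\hat u^{-\frac{n+2}{n-2}}L_g(\phi_i)=0 .
\]
Hence each component of $\Phi$ is $g_{\hat u}$-harmonic in $M$, which is the interior harmonic map equation for a map into a Euclidean ball. The division by $\hat u$ is legitimate because $\hat u>0$ and smooth in $M$ (Claim \ref{HE}).

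Next I show that $\Phi$ takes values in $\mathbb{B}^k$ and maps $\Sigma^*$ into $\mathbb{S}^{k-1}$. Set $w:=(\sum_i\phi_i^2)^{1/2}$. It is the norm of a $g$-harmonic vector-valued function, so it is subharmonic in $M$; concretely, $\Delta_g w^2=2\sum_i|\nabla_g\phi_i|^2\ge 2|\nabla_g w|^2$, with the usual smoothing near $\{w=0\}$, e.g.\ working with $(w^2+\delta)^{1/2}$. By (\ref{MT2}), $w=u=\hat u$ on $\Sigma$. Moreover $w-\hat u\in W^{1,2}(M)$ is subharmonic with zero trace, so the weak maximum principle gives $w\le\hat u$ in $M$, i.e.\ $|\Phi|\le 1$. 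On $\Sigma^*$ we have $\hat u=u>0$, and (\ref{MT2}) gives $|\Phi|^2=\sum_i\phi_i^2/u^2=1$.

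For the boundary condition, work on $\Sigma^*$ and use (\ref{RCL}) with $\phi=\phi_i/\hat u$. Since $N-2=\frac{2}{n-2}$ and $-\frac{n}{n-2}+\frac{2}{n-2}=-1$, the eigenvalue equation (\ref{l9}) with $\lambda_2=\lambda_2(u)$ (Claim \ref{LimitEigenvalues}) yields
\[
B_{g_{\hat u}}(\Phi_i)=u^{-\frac{n}{n-2}}B_g(\phi_i)=\lambda_2(u)\,u^{-\frac{n}{n-2}}u^{\frac{2}{n-2}}\phi_i=\lambda_2(u)\,\Phi_i .
\]
Therefore $\partial_{\nu_{g_{\hat u}}}\Phi=\bigl(\lambda_2(u)-2c_nh_{g_{\hat u}}\bigr)\Phi$, which is parallel to $\Phi$ on $\Sigma^*$. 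From the interior equation together with this boundary law, the weak free-boundary formulation follows by integrating by parts against any admissible $V$ tangent to the sphere, since $\langle\partial_\nu\Phi,V\rangle=0$ on $\Sigma^*$. The set $u^{-1}(0)$ can be ignored here because it has Hausdorff dimension at most $n-2$ (Claim \ref{ZeroBoundaryMeasure}).

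The main obstacle is justifying this boundary computation. It requires $\hat u$ and $\phi_i$ to be $C^1$ up to $\Sigma^*$, so that $h_{g_{\hat u}}=u^{-\frac{2}{n-2}}\bigl(h_g+\frac{2(n-1)}{n-2}u^{-1}\partial_{\nu_g}\hat u\bigr)$ makes sense pointwise and (\ref{RCL}) may be applied to non-smooth functions. I would handle this locally. Near a point of $\Sigma^*$, $u$ is smooth by the preceding claim, so local boundary Schauder estimates for the Dirichlet problem make $\hat u$ smooth up to $\Sigma^*$. Bootstrapping (\ref{l9}) then makes $\phi_i$ smooth up to $\Sigma^*$. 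After that, (\ref{RCL}) is a pointwise identity on $\Sigma^*$.
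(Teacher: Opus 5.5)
Your proof is correct and is essentially the paper's argument: scalar-flatness of $g_{\hat u}$ and conformal covariance of $L$ and $B$ give harmonicity of $\Phi$ and the Robin law $B_{g_{\hat u}}(\Phi_i)=\lambda_2\Phi_i$ on $\Sigma^*$. The bound $|\Phi|\le 1$ comes from a maximum principle against $\hat u$. For that bound, the paper applies the maximum principle to the harmonic functions $g_E(\Psi,v)-\hat u$, $v\in\mathbb{S}^{k-1}$, and takes the supremum over $v$, which avoids your smoothing of $|\Psi|$ near its zero set.

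Your local boundary-regularity paragraph justifies a point the paper uses tacitly. Your aside on the weak free-boundary formulation lies outside this claim. It would also need more than a Hausdorff-dimension count for $u^{-1}(0)$; the paper instead tests the $g$-equations with $\hat u V_i$ and $V_i\phi_i$.
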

\begin{proof}
Recall that $\sum_{i=1}^k\phi_i^2 = u^2$ on $\Sigma$. We set $\Psi = (\phi_1,\cdots, \phi_k)$.  Notice that, for any $v\in\mathbb{S}^{k-1}$, the function $g_{E}(\Psi,v) - \hat u$ is harmonic. On the other hand, $g_E(\Psi,v)\le |\Psi| = u = \hat u$ on $\Sigma$, and so the maximum principle yields $g_E(\Psi, v)\le \hat u$ on $\overline M$. Since $v\in \mathbb{S}^{k-1}$ is arbitrary, this yields $|\Psi|\le \hat u$ in $M$.

We now set $\hat \phi_i = \frac{\phi_i}{\hat u}$ for each $i\in \{1,\cdots, k\}$ and $\Phi = (\hat \phi_1,\cdots, \hat \phi_k)$, which is well defined in $M$ thanks to Claim \ref{HE}. Then $|\Phi|\le 1$ everywhere in $M$. Furthermore, because of (\ref{HE1}), the metric $g_{\hat u}$ is scalar flat in $M$, and thus
\begin{equation}\label{HM1}
-\Delta_{g_{\hat u}}(\hat \phi_i) = L_{g_{\hat u}}(\hat \phi_i) =\hat u^{-\frac{n+2}{n-2}}L_g(\phi_i) = -\hat u^{-\frac{n+2}{n-2}} \Delta_{g}\phi_i = 0.
\end{equation}
This shows that $\Phi$ is $g_{\hat u}$-harmonic. On the other hand, from (\ref{l11}), we have that $\sum_{i=1}^k\hat\phi_i^2 = 1$ along $\Sigma^*$, that is, $\Phi(x)\in \mathbb{S}^{k-1}(1)$ for a.e. $x\in \Sigma$. Thus $\Phi(M)\subset \mathbb{B}^k$ and $\Phi(\Sigma^*)\subset \mathbb{S}^{k-1}$. Additionally, notice that as a consequence of the conformal invariance (\ref{RCL}) and of (\ref{l9}), we obtain
\begin{equation}
B_{g_{\hat u}}(\hat \phi_i) = u^{1-N}B_g(\phi_i) = u^{1-N} \lambda_2 \phi_i u^{N-2} = \lambda_2 \hat \phi_i,
\end{equation}
on $\Sigma^*$, and therefore
\begin{equation}
\partial_{\nu_{g_{\hat u}}} \hat \phi_i =  (\lambda_2 - 2c_n h_{g_{\hat u}})\hat \phi_i
\end{equation}
holds a.e. on $\Sigma$. Hence, $\partial_{\nu_{g_{\hat u}}} \Phi$ is parallel to $\Phi$ on $\Sigma^*$, that is, a.e. on $\Sigma$. 
\end{proof}

It remains to prove that $\Phi$ is a $g_{\hat u}$-weakly free-boundary harmonic map. Recall that $g_{\hat u}$ may be degenerate on a zero measure set on $\Sigma$. Because of this, our testing space is $L^\infty(M)\cap W^{1,2}(M,g_{\hat u};\mathbb{R}^k)$: these are maps $V: M\to \mathbb{R}^k$ bounded in $M$ for which 
\begin{equation}
\int_M \left(|V|^2\hat u^{\frac{2n}{n-2}}+ |\nabla_g V|^2\hat u^2\right)\;dv_g <\infty. 
\end{equation}
Notice that the trace of a map in $W^{1,2}(M,g_{\hat u};\mathbb{R}^k)$ might not be well-defined globally. However, for $V=(V_1,\cdots, V_k)\in L^\infty(M)\cap W^{1,2}(M,g_{\hat u};\mathbb{R}^k)$, each $V_i\phi_i$ and each $\hat u V_i$ are in $W^{1,2}(M)$, and so their traces are well defined. This is all we need. 

\begin{claim}\label{Weakly-FHarmonic}
The map $\Phi$ constructed in Claim \ref{HM} is a $g_{\hat u}$-weakly free-boundary harmonic map.
\end{claim}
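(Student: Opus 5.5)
We need to show: for every $V\in L^\infty(M)\cap W^{1,2}(M,g_{\hat u};\mathbb{R}^k)$ with $V(x)\in T_{\Phi(x)}\mathbb{S}^{k-1}$ for a.e. $x\in\Sigma$, we have $\int_M g_{\hat u}(\nabla_{g_{\hat u}}\Phi,\nabla_{g_{\hat u}}V)\,dv_{g_{\hat u}}=0$. The plan is to undo the conformal change, reducing the identity to the weak eigenvalue equation (\ref{l8}) for the $\phi_i$ together with the harmonicity of $\hat u$. The boundary term will then be seen to vanish because of the tangency condition $\sum_i\phi_iV_i=0$ a.e. on $\Sigma$ and the eigenvalue equation.

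\textbf{Step 1 (conformal rewriting).} Since $dv_{g_{\hat u}}=\hat u^{\frac{2n}{n-2}}dv_g$ and $|\nabla\cdot|^2_{g_{\hat u}}=\hat u^{-\frac{4}{n-2}}|\nabla\cdot|^2_g$, the Dirichlet integrand becomes $\hat u^2\langle\nabla_g\hat\phi_i,\nabla_gV_i\rangle_g$. Expanding $\nabla_g(\phi_i/\hat u)$ gives
\[
\hat u^2\langle\nabla_g\hat\phi_i,\nabla_g V_i\rangle=\langle\nabla_g\phi_i,\nabla_g V_i\rangle\hat u-\phi_i\langle\nabla_g\hat u,\nabla_gV_i\rangle .
\]
Using $\nabla_g(\hat uV_i)$ and $\nabla_g(\phi_iV_i)$, this equals
\[
\langle\nabla_g\phi_i,\nabla_g(\hat uV_i)\rangle-\langle\nabla_g\hat u,\nabla_g(\phi_iV_i)\rangle ,
\]
since the $V_i\langle\nabla\phi_i,\nabla\hat u\rangle$ terms cancel. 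Both $\hat uV_i$ and $\phi_iV_i$ lie in $W^{1,2}(M)$, as remarked before the claim. This pointwise identity is a priori valid only in the interior, so I would first carry it out for $V$ truncated by a cutoff vanishing near $\Sigma$ (or near $u^{-1}(0)$) and then pass to the limit by dominated convergence, using $|\phi_i|\le\hat u$ and $V\in L^\infty$.

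\textbf{Step 2 (weak equations).} Test (\ref{l8}) with $\psi=\hat uV_i$:
\[
\int_M\langle\nabla\phi_i,\nabla(\hat uV_i)\rangle=\int_\Sigma(\lambda_2u^{N-2}-2c_nh_g)\phi_i uV_i .
\]
For the second term, $\hat u$ is harmonic, so $\int_M\langle\nabla\hat u,\nabla(\phi_iV_i)\rangle$ is the boundary pairing $\int_\Sigma\partial_\nu\hat u\,\phi_iV_i$. Summing over $i$, the first integral becomes $\int_\Sigma(\lambda_2u^{N-2}-2c_nh_g)u\sum_i\phi_iV_i=0$ by tangency, since $\sum_i\phi_iV_i=u\,\Phi\cdot V=0$ a.e. on $\Sigma$. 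The second becomes $\int_\Sigma\partial_\nu\hat u\sum_i\phi_iV_i=0$ for the same reason.

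\textbf{Main obstacle.} The difficulty lies in the second boundary integral: $\partial_\nu\hat u$ need not be a function, since $u$ is only $C^{0,\beta}\cap W^{1,p}$, and the interpretation must be justified. I would avoid it as follows. Set $w:=\sum_i\phi_iV_i\in W^{1,2}(M)$, which has zero trace. Then $\int_M\langle\nabla\hat u,\nabla w\rangle=0$ holds directly: $\hat u\in W^{1,2}(M)$ is weakly harmonic, and $w\in W^{1,2}_0(M)$ is approximable by $C_c^\infty(M)$ functions. So no normal derivative of $\hat u$ is needed. The remaining care is the justification in Step 1 that every product has the claimed Sobolev regularity; this uses Claim \ref{HE}, the bound $|\Psi|\le\hat u$ from Claim \ref{HM}, and the $C^{1,\beta}(\Sigma)$ regularity of the $\phi_i$.
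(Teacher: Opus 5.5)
Your proposal is correct and follows the same route as the paper. You rewrite the pairing as $\sum_i\int_M\bigl(\langle\nabla_g\phi_i,\nabla_g(\hat uV_i)\rangle-\langle\nabla_g\hat u,\nabla_g(\phi_iV_i)\rangle\bigr)\,dv_g$, then make both pieces vanish by testing the weak equation for $\phi_i$ with $\hat uV_i$ and the harmonicity of $\hat u$ with $\phi_iV_i$, with the boundary terms disappearing by tangency.

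Your observation that $\sum_i\phi_iV_i$ has zero trace lets the $\hat u$-term vanish without ever giving meaning to $\partial_\nu\hat u$, which is a slightly cleaner justification than the paper's appeal to $\partial_{\nu_{g_{\hat u}}}\Phi\parallel\Phi$. Note also that the paper first checks $\Phi\in W^{1,2}(M,g_{\hat u};\mathbb{R}^k)$; this follows from $|\Phi|\le 1$, the same bound your integrability remarks already use.
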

\begin{proof}
We show that $\Phi = (\hat \phi_1, \cdots, \hat \phi_k) = \frac{\Psi}{\hat u}\in W^{1,2}(M,g_{\hat u};\mathbb{R}^{k})$. First, notice that $\int_M |\nabla_{g_{\hat u}}\Phi|^2\;dv_{g_{\hat u}} = \int_M \hat u^2 |\nabla_{g}\Phi|^2\;dv_g$. On the other hand, $\nabla_g\hat \phi_i = \frac{\nabla_g \phi_i}{\hat u} - \frac{\phi_i\nabla_g \hat u}{\hat u^2}$, and so 
\begin{equation}
\hat u^2 |\nabla_{g}\Phi|^2  = |\nabla_g \Psi|^2 + |\Phi|^2|\nabla_g\hat u|^2 - 2\sum_{i=1}^k \hat \phi_i g(\nabla_g\phi_i , \nabla_g \hat u). 
\end{equation}
Therefore, 
\begin{equation}
\int_M |\nabla_{g_{\hat u}}\Phi|^2\;dv_{g_{\hat u}} \le 2 \int_M \left(|\nabla_g \Psi|^2 + |\nabla_g\hat u|^2\right)\;dv_{g} <\infty. 
\end{equation}
Finally, suppose we are given $V\in L^\infty(M)\cap W^{1,2}(M,g_{\hat u};\mathbb{R}^k)$ with $g_E(V,\Phi)=0$ a.e. on $\Sigma$. We want to show that 
\begin{equation}\label{Weakly-FHarmonic0}
\begin{split}
\int_M g_{\hat u}(\nabla_{g_{\hat u}}\Phi, \nabla_{g_{\hat u}}V) \;dv_{g_{\hat u}} & = \sum_{i=1}^k\int_M\Big{(}\hat u g(\nabla_g\phi_i,\nabla_gV_i) \\ &\hspace{.75in}- \phi_ig(\nabla_g\hat u, \nabla_g V_i)\Big{)}\;dv_{g}
\end{split}
\end{equation}
is zero. Since $V_i\phi_i, \hat u V_i\in W^{1,2}(M)$ for each $i = 1,\cdots, k$, we can test $(\hat u V_i)$ in $\Delta_g\phi = 0$ and $V_i\phi_i$ in $\Delta_g\hat u = 0$, to obtain
\begin{equation}\label{Weakly-FHarmonic1}
\sum_{i=1}^k\int_M\left(V_ig(\nabla_g \phi_i, \nabla_g \hat u) + \phi_i g(\nabla_g V_i, \nabla_{g} \hat u)\right)\;dv_g = 0 
\end{equation}
and 
\begin{equation}\label{Weakly-FHarmonic2}
\sum_{i=1}^k\int_M\left(V_ig(\nabla_g \phi_i, \nabla_g \hat u) + \hat u g(\nabla_g V_i, \nabla_{g} \phi_i)\right)\;dv_g = 0,
\end{equation}
where we have used that both $g_E(V,\Phi) = 0$ and $\partial_{\nu_{g_{\hat u}}}\Phi \| \Phi$ a.e. on $\Sigma$. Subtracting (\ref{Weakly-FHarmonic1}) from (\ref{Weakly-FHarmonic2}) yields (\ref{Weakly-FHarmonic0}), which finishes the proof. 
\end{proof}


\newpage
\begin{thebibliography}{9}

\bibitem{Almaraz} S. Almaraz, Convergence of scalar-flat metrics on manifolds with boundary under a Yamabe-type flow, \textit{J. Differential Equations} \textbf{259} (2015), no. 6, 2626--2694.

\bibitem{Ammann}
B. Ammann and E. Humbert, The second Yamabe invariant,
\textit{J. Funct. Anal.} \textbf{235} (2006), no. 2, 377--412.

\bibitem{Brezis}
H. Brezis, \textit{Functional Analysis, Sobolev Spaces and Partial Differential Equations},
Universitext, Springer, New York, 2011.

\bibitem{Chen} S.-Y. S. Chen, Conformal deformation to scalar flat metrics with constant mean curvature on the boundary in higher dimensions, \textit{Geom. Funct. Anal.} \textbf{19} (2009), 1029--1064.

\bibitem{ChenLaiWang} X. Chen, M. Lai, and F. Wang, Escobar--Yamabe compactifications for Poincar\'e--Einstein manifolds and rigidity theorems, \textit{Adv. Math.} \textbf{343} (2019), 16--35.

\bibitem{ClappPellacciPistoia}
M. Clapp, B. Pellacci, and A. Pistoia, Sign-changing solutions to the Yamabe problem on manifolds with boundary, arXiv:2511.10553v2 [math.DG], 2026.

\bibitem{Cox}
G. Cox, D. Jakobson, M. Karpukhin, and Y. Sire,
Conformal invariants from nodal sets. II. Manifolds with boundary,
\textit{J. Spectr. Theory} \textbf{11} (2021), no. 2, 387--409.

\bibitem{ElSayed}
S. El Sayed, Second eigenvalue of the Yamabe operator and applications,
\textit{Calc. Var. Partial Differential Equations} \textbf{50} (2014),
no. 3--4, 665--692.

\bibitem{Soufi}
A. El Soufi and S. Ilias,
Riemannian manifolds admitting isometric immersions by their first eigenfunctions,
\textit{Pacific J. Math.} \textbf{195} (2000), no. 1, 91--99.

\bibitem{Soufi2}
A. El Soufi and S. Ilias,
Laplacian eigenvalue functionals and metric deformations on compact manifolds,
\textit{J. Geom. Phys.} \textbf{58} (2008), no. 1, 89--104.

\bibitem{Escobar}
J. F. Escobar,
Uniqueness theorems on conformal deformation of metrics, Sobolev inequalities,
and an eigenvalue estimate,
\textit{Comm. Pure Appl. Math.} \textbf{43} (1990), no. 7, 857--883.

\bibitem{Escobar2}
J. F. Escobar,
Conformal deformation of a Riemannian metric to a scalar flat metric with
constant mean curvature on the boundary,
\textit{Ann. of Math. (2)} \textbf{136} (1992), no. 1, 1--50.

\bibitem{Escobar3}
J. F. Escobar,
Addendum: conformal deformation of a Riemannian metric to a scalar flat metric
with constant mean curvature on the boundary,
\textit{Ann. of Math. (2)} \textbf{139} (1994), no. 3, 749--750.

\bibitem{Fraser}
A. Fraser and R. Schoen,
Minimal surfaces and eigenvalue problems,
in \textit{Geometric Analysis}, Contemp. Math., vol. 599,
Amer. Math. Soc., Providence, RI, 2013, pp. 105--121.

\bibitem{Fraser2}
A. Fraser and R. Schoen,
Sharp eigenvalue bounds and minimal surfaces in the ball,
\textit{Invent. Math.} \textbf{203} (2016), no. 3, 823--890.

\bibitem{Gilbarg}
D. Gilbarg and N. S. Trudinger,
\textit{Elliptic Partial Differential Equations of Second Order},
2nd ed., Classics in Mathematics, Springer-Verlag, Berlin, 2001.

\bibitem{GonzalezSaez}
M. del Mar Gonz\'alez and M. S\'aez,
Eigenvalue bounds for the Paneitz operator and its associated third-order boundary operator on locally conformally flat manifolds,
\textit{Ann. Sc. Norm. Super. Pisa Cl. Sci. (5)}
\textbf{24} (2023), no. 4, 2047--2081.

\bibitem{Gursky}
M. J. Gursky and S. P\'erez-Ayala,
Variational properties of the second eigenvalue of the conformal Laplacian,
\textit{J. Funct. Anal.} \textbf{282} (2022), no. 8, Paper No. 109371.

\bibitem{Hamanaka}
S. Hamanaka and P. T. Ho,
Notes on the uniqueness of Type II Yamabe metrics,
\textit{Nonlinear Differ. Equ. Appl.} \textbf{32} (2025), Article No. 81.

\bibitem{Hebey}
E. Hebey,
\textit{Nonlinear Analysis on Manifolds: Sobolev Spaces and Inequalities},
Courant Lecture Notes in Mathematics, vol. 5,
New York University, Courant Institute of Mathematical Sciences, New York;
American Mathematical Society, Providence, RI, 2000.

\bibitem{Hsiao}
G. C. Hsiao and W. L. Wendland,
\textit{Boundary Integral Equations},
Applied Mathematical Sciences, vol. 164,
Springer-Verlag, Berlin, 2008.

\bibitem{HumbertPetridesPremoselli}
E. Humbert, R. Petrides, and B. Premoselli, Extremising eigenvalues of the GJMS operators in a fixed conformal class, arXiv:2505.08280 [math.DG], 2025.

\bibitem{Jost}
J. Jost, L. Liu, and M. Zhu,
The qualitative behavior at the free boundary for approximate harmonic maps
from surfaces,
\textit{Math. Ann.} \textbf{374} (2019), no. 1--2, 133--177.


\bibitem{KarpukhinNadirashviliPenskoiPolterovich}
M. Karpukhin, N. Nadirashvili, A. V. Penskoi, and I. Polterovich,
Conformally maximal metrics for Laplace eigenvalues on surfaces,
\textit{Surv. Differ. Geom.} \textbf{24} (2019), 205--256.

\bibitem{KarpukhinStern}
M. Karpukhin and D. L. Stern, Min--max harmonic maps and a new characterization of conformal eigenvalues, \textit{J. Eur. Math. Soc. (JEMS)} \textbf{26} (2024), no. 11, 4071--4129.

\bibitem{Kokarev}
G. Kokarev,
Variational aspects of Laplace eigenvalues on Riemannian surfaces,
\textit{Adv. Math.} \textbf{258} (2014), 191--239.

\bibitem{Lassas}
M. Lassas, T. Liimatainen, and M. Salo,
The Calder\'on problem for the conformal Laplacian,
\textit{Comm. Anal. Geom.} \textbf{30} (2022), no. 5, 1121--1184.

\bibitem{LaurainPetrides}
P. Laurain and R. Petrides,
Regularity and quantification for harmonic maps with free boundary,
\textit{Adv. Calc. Var.} \textbf{10} (2017), no. 1, 69--82.

\bibitem{Li}
Z. Li, Unique continuation for Robin problems with non-smooth potentials, \textit{J. Funct. Anal.} \textbf{288} (2025), no. 6,
Article No. 110811.

\bibitem{WinkertBoundedness}
P. Winkert,
On the boundedness of solutions to elliptic variational inequalities,
\textit{Set-Valued Var. Anal.} \textbf{22} (2014), no. 4, 763--781.

\bibitem{Marino}
G. Marino and P. Winkert,
Moser iteration applied to elliptic equations with critical growth on the boundary,
\textit{Nonlinear Anal.} \textbf{180} (2019), 154--169.

\bibitem{MarquesCAG} F. C. Marques, Conformal deformations to scalar-flat metrics with constant mean curvature on the boundary, \textit{Comm. Anal. Geom.} \textbf{15} (2007), no. 2, 381--405.

\bibitem{Marques} F. C. Marques, Existence results for the Yamabe problem on manifolds with boundary,
\textit{Indiana Univ. Math. J.} \textbf{54} (2005), no. 6, 1599--1620.

\bibitem{Nadirashvili}
N. Nadirashvili,
Berger's isoperimetric problem and minimal immersions of surfaces,
\textit{Geom. Funct. Anal.} \textbf{6} (1996), no. 5, 877--897.


\bibitem{Petrides}
R. Petrides,
Existence and regularity of maximal metrics for the first Laplace eigenvalue
on surfaces,
\textit{Geom. Funct. Anal.} \textbf{24} (2014), no. 4, 1336--1376.

\bibitem{Petrides2}
R. Petrides,
Maximizing Steklov eigenvalues on surfaces,
\textit{J. Differential Geom.} \textbf{113} (2019), no. 1, 95--188.

\bibitem{PerezAyalaSireXu}
S. P\'erez-Ayala, Extremal eigenvalues of the conformal Laplacian under Sire--Xu normalization, \textit{Nonlinear Anal.} \textbf{208} (2021), Article No. 112308.

\bibitem{PerezAyalaPaneitz}
S. P\'erez-Ayala, Extremal metrics for the Paneitz operator on closed four-manifolds, \textit{J. Geom. Phys.} \textbf{182} (2022), Article No. 104666.

\bibitem{TaylorPDO} M. E. Taylor, \textit{Pseudodifferential Operators and Nonlinear PDE}, Progress in Mathematics, vol. 100, Birkh\"auser, Boston, 1991.

\bibitem{VinokurovSymmetries} D. Vinokurov, Conformal optimization of eigenvalues on surfaces with symmetries, \textit{J. Lond. Math. Soc. (2)} \textbf{112} (2025), no. 6, e70386.

\bibitem{VinokurovHigher} D. Vinokurov, Maximizing higher eigenvalues in dimensions three and above, arXiv:2506.09328 [math.SP], 2025.




\end{thebibliography}
\end{document}